\crefname{secinapp}{Section}{Sections}
\Crefname{secinapp}{Section}{Sections}
\newlength{\dhatheight}
\newcommand{\doublehat}[1]{%
    \settoheight{\dhatheight}{\ensuremath{\hat{#1}}}%
    \addtolength{\dhatheight}{-0.3ex}%
    \hat{\vphantom{\rule{1pt}{\dhatheight}}%
    \smash{\hat{#1}}}}
\newtheorem{theorem}{Theorem}[section]
\newtheorem{lemma}[theorem]{Lemma}
\def\XXint#1#2#3{{\setbox0=\hbox{$#1{#2#3}{\int}$ }
\vcenter{\hbox{$#2#3$ }}\kern-.57\wd0}}
\newcounter{myalgorithmctr}
\begin{document}

\begin{frontmatter}
\title{Hierarchical analysis-suitable T-splines: Formulation,
  B\'{e}zier extraction, and application as an adaptive basis for
  isogeometric analysis}

\author[byu1]{E. J. Evans\corref{cor1}}
\ead{ejevans@math.byu.edu}

\author[byu2]{M. A. Scott}

\author[china]{X. Li}

\author[byu3]{D. C. Thomas}

\cortext[cor1]{Corresponding author}

\address[byu1]{Department of Mathematics,
  Brigham Young University,
  Provo, Utah 84602, USA}

\address[byu2]{Department of Civil and Environmental Engineering,
  Brigham Young University,
  Provo, Utah 84602, USA}

\address[china]{University of Science and Technology of China,
Hefei, Anhui Province 230026, P. R. China}

\address[byu3]{Department of Physics and Astronomy,
  Brigham Young University,
  Provo, Utah 84602, USA}

\begin{abstract}
In this paper hierarchical analysis-suitable T-splines (HASTS) are
developed. The resulting spaces are a superset of both
analysis-suitable T-splines and hierarchical B-splines. The additional
flexibility provided by the hierarchy of
T-spline spaces results in simple, highly localized refinement
algorithms which can be utilized in a 
design or analysis context. A detailed theoretical formulation is
presented including a proof of local linear independence for analysis-suitable
 T-splines, a requisite theoretical ingredient for HASTS. 
 B\'{e}zier extraction is extended to HASTS simplifying the 
 implementation of HASTS in existing finite element codes. The
behavior of a simple HASTS refinement algorithm is compared to the
local refinement algorithm for analysis-suitable T-splines
demonstrating the superior efficiency and locality of the HASTS
algorithm. Finally, HASTS are utilized as a basis for adaptive isogeometric
analysis.
\end{abstract}

\begin{keyword}
isogeometric analysis, hierarchical splines, adaptive mesh refinement, T-splines
\end{keyword}

\end{frontmatter}

\section{Introduction}
In this work, a hierarchical extension of analysis-suitable T-splines
is developed and utilized in the context of isogeometric design and
analysis. We call this new spline description hierarchical
analysis-suitable T-splines (HASTS). The class of HASTS is a strict
superset of both analysis-suitable
T-splines~\cite{LiZhSeHuSc10,ScLiSeHu10,BeBuChSa12,BeBuSaVa12,LiScSe12}
and hierarchical
B-splines~\cite{FoBa88,SchDeScEvBoRaHu12,VuGiJuSi11,ScThEv13,GiJu13,GiJuSp12}. 

T-splines, introduced in the CAD community~\cite{SeZhBaNa03}, are a generalization of non-uniform
rational B-splines (NURBS) which address fundamental
limitations in NURBS-based design. For example, a T-spline can model a complicated design as a single,
watertight geometry and are also locally
refinable~\cite{SeCaFiNoZhLy04, ScLiSeHu10}. Since their advent they
have emerged as an important technology across multiple disciplines
and can be found in several major commercial CAD products~\cite{TSManual12,Autodesk360}.

Isogeometric 
analysis was introduced in~\cite{HuCoBa04} and described in detail
in~\cite{Cottrell:2009rp}. The isogeometric paradigm is simple: use the smooth spline basis
that defines the geometry as the basis for analysis. As a result, exact
geometry is introduced into the analysis, the 
smooth basis can be leveraged by the
analysis~\cite{EvBaBaHu09,HuEvRe13,CoHuRe07}, and new innovative approaches to 
model design~\cite{WaZhScHu11,LiZhHuScSe14},
analysis~\cite{SchDeScEvBoRaHu12,ScSiEvLiBoHuSe12,ScWuBl12,BeBaDeHsScHuBe09},
optimization~\cite{Wall08}, and
adaptivity~\cite{Bazilevs2009,DoJuSi09,ScThEv13, ScThEv13} are
made possible. The use of T-splines as a basis for isogeometric analysis (IGA) has gained
widespread attention across a number of
application areas~\cite{Bazilevs2009,ScBoHu10,ScLiSeHu10,Verhoosel:2010vn,
 Verhoosel:2010ly,BoScLaHuVe11,BeBaDeHsScHuBe09,SchDeScEvBoRaHu12,
 ScSiEvLiBoHuSe12,SiScTaThLi14,DiLoScWrTaZa13,HoReVeBo14,BaHsSc12,BuSaVa12,GiKoPoKaBeGeScHu14}. Particular focus has been placed on the use of T-spline local refinement in an
analysis
context~\cite{ScLiSeHu10,BoScLaHuVe11,Verhoosel:2010ly,Verhoosel:2010vn}.

In the context of CAD, where a designer interacts directly with the
geometry, T-spline local refinement is most useful if 
confined to a \textit{single level}. In other words, all local refinement is
done on one control mesh and all
control points have similar influence on the shape of the surface. In
this way, the geometric behavior of the surface is easily controlled
through the manipulation of control points before and after
refinement. In the context of analysis, however, where not all control
points need to have a geometric interpretation, the single level
restriction can be relaxed. This \textit{hierarchical} point of view has
important advantages:
\begin{enumerate} 
\item Hierarchical local refinement remains completely
  local. Single level T-spline local refinement always entails a degree of
  nonlocal control point propagation~\cite{ScLiSeHu10}.
\item Hierarchical local coarsening is achieved by
simply removing higher levels of refinement where
needed~\cite{ScThEv13}. Local coarsening operations for single level
T-spline descriptions are possible but their algorithmic complexity
remains uncertain~\cite{SeCaFiNoZhLy04}.
\item Hierarchical refinement and coarsening operations use a fixed control
mesh which simplifies algorithmic developments,
especially for parallel computations. Single level
local refinement requires expensive mesh manipulation and
modification operations.
\item Hierarchies of finite-dimensional subspaces are the natural
  setting for many optimized iterative solvers
and preconditioning techniques for large-scale linear systems.
\end{enumerate}
Initial investigations employing hierarchical B-spline refinement in
the context of IGA have demonstrated the promise of the hierarchical
approach~\cite{KuVeZeBr14,SchDeScEvBoRaHu12,SchEvReScHu13,GrKrSch02,ScRa11}.

HASTS inherit the design strengths of T-splines without the single
level restriction. In this way, a complex T-spline design can be
encapsulated in the first level of the hierarchy while higher
levels can be leveraged to develop
adaptive multiresolution schemes which are smooth, highly localized,
geometrically exact, and appropriate for the analysis task at hand. We
feel that this provides the appropriate
mathematical foundation for the development of integrated isogeometric
design and analysis methodologies for demanding applications in
science and engineering. 
Note that, in this paper, we restrict our theoretical
developments to HASTS defined over four-sided domains. However,
extending HASTS to domains of arbitrary topological genus should be
straightforward in the context of the recently introduced spline
forest~\cite{ScThEv13}.

We note that in addition to T-splines, hierarchical B-splines, and
NURBS a number of alternative spline
technologies have been proposed as a basis for IGA with varying
strengths and weaknesses. Truncated hierarchical B-splines 
(THB-splines)~\cite{GiJuSp12, KiGiJu14,GiJuSp13,BeKiBrChMoOhKi14} are a modification of
hierarchical B-splines~\cite{FoBa88,VuGiJuSi11,SchEvReScHu13} which
possess a partition of unity and enhanced numerical
conditioning. B-spline forests~\cite{ScThEv13} are a generalization of
hierarchical B-splines to surfaces and volumes of arbitrary
topological genus. Polynomial splines over hierarchical T-meshes
(PHT-splines)~\cite{htspline2, htspline3, 
  htspline4, htspline5}, modified T-splines~\cite{KaChDe13}, and locally refined splines
(LR-splines)~\cite{DoLyPe13, Br13} are closely related to T-splines
with varying levels of smoothness and approaches to local refinement. Generalized
B-splines~\cite{MaPeSa11,CoMaPeSa10} and T-splines~\cite{BrBeChOhKi14}
enhance a piecewise polynomial spline basis by including
non-polynomial functions, typically
trigonometric or hyperbolic functions. Generalized splines permit the exact representation of
conic sections without resorting to rational functions.
Generalized splines can also be used to represent solution features
with known non-polynomial characteristics exactly in
certain circumstances.

\subsection{Structure and content of the paper}
\label{sec:content}
In~\cref{sec:tmesh} the T-mesh is described and appropriate notational
conventions are introduced. Analysis-suitable T-splines are then described
in~\cref{sec:tspsp}. The local linear independence of 
analysis-suitable T-splines is established
in~\cref{sec:lli}. Hierarchical analysis-suitable T-splines are then defined
in~\cref{sec:hts}.  In preparation for their use in design and
analysis a B\'{e}zier extraction framework is introduced
in~\cref{sec:extract}. HASTS are then utilized as a basis for
isogeometric analysis in~\cref{sec:iga}.
In ~\cref{sec:conclusion} we draw conclusions. We note that the paper
has been written so the proof of local linear independence in
Section~\ref{sec:lli} is self-contained and can be skipped if the
reader is not interested in the detailed theory of analysis-suitable
T-splines.

\section{The T-mesh}
\label{sec:tmesh}
The T-mesh is used to define the topological structure of the
associated T-spline space. In other words, the T-mesh defines the
basis functions and their relationship to one another. We closely
follow the notational conventions introduced
in~\cite{LiScSe12,BeBuSaVa12,BeBuChSa12}.

A T-mesh $\mathsf{T}$ in two dimensions is a rectangular partition of
$\doublehat{\Omega} = [1,m] \times [1,n]$ such that all vertices $V = \{i,j\}
\in \mathsf{V}$ have integer coordinates. All cells $C \in \mathsf{C}$
are rectangular, non-overlapping, and open. An edge is a horizontal or vertical line
segment between vertices which does not intersect any cell. The
valence of a vertex $V \in \mathsf{V}$ is the number of edges
coincident to that vertex. Since all cells are assumed rectangular,
only valence three (i.e., T-junction) or four is allowed for all vertices $V \subset
(1,m) \times (1,n)$. The sets of horizontal and vertical coordinates in the
T-mesh are denoted by $h\mathsf{I}=\{1,2,\ldots,m\}$ and
$v\mathsf{I}=\{1,2,\ldots,n\}$. The
horizontal and vertical skeletons, $h\mathsf{S}$ and 
$v\mathsf{S}$, of a T-mesh are the union of all horizontal and
vertical edges, respectively, and associated vertices.
The entire skeleton is denoted by $\mathsf{S} = h\mathsf{S} \cup
v\mathsf{S}$. 

We split $\doublehat{\Omega}$ into an
active region $\mathsf{AR}$ and a 
frame region $\mathsf{FR}$ such that $\doublehat{\Omega} = \mathsf{FR} \cup
\mathsf{AR}$ and $\mathsf{AR} =  [1 + \lfloor (p + 1)/2
\rfloor, m -\lfloor (p + 1)/2
\rfloor] \times [1+\lfloor (q + 1)/2
\rfloor, n-\lfloor (q + 1)/2
\rfloor]$, and 
$\mathsf{FR} = \overline{\doublehat{\Omega}\setminus
  \mathsf{AR}}$ where $p$ and $q$ are polynomial degrees. Note that
\textit{both} $\mathsf{FR}$ and $\mathsf{AR}$ are closed. Further, all
T-meshes considered in this work are admissible as described
in~\cite{LiScSe12}, a mild restriction always adopted in practice.
The notation $\mathsf{T}^1 \subseteq \mathsf{T}^2$ will indicate
that $\mathsf{T}^2$ can be created by adding vertices and edges to
$\mathsf{T}^1$. \cref{fig:tmesh} shows an example T-mesh.
\begin{figure}[htb]
  \centering
  {\includegraphics[scale=0.6]{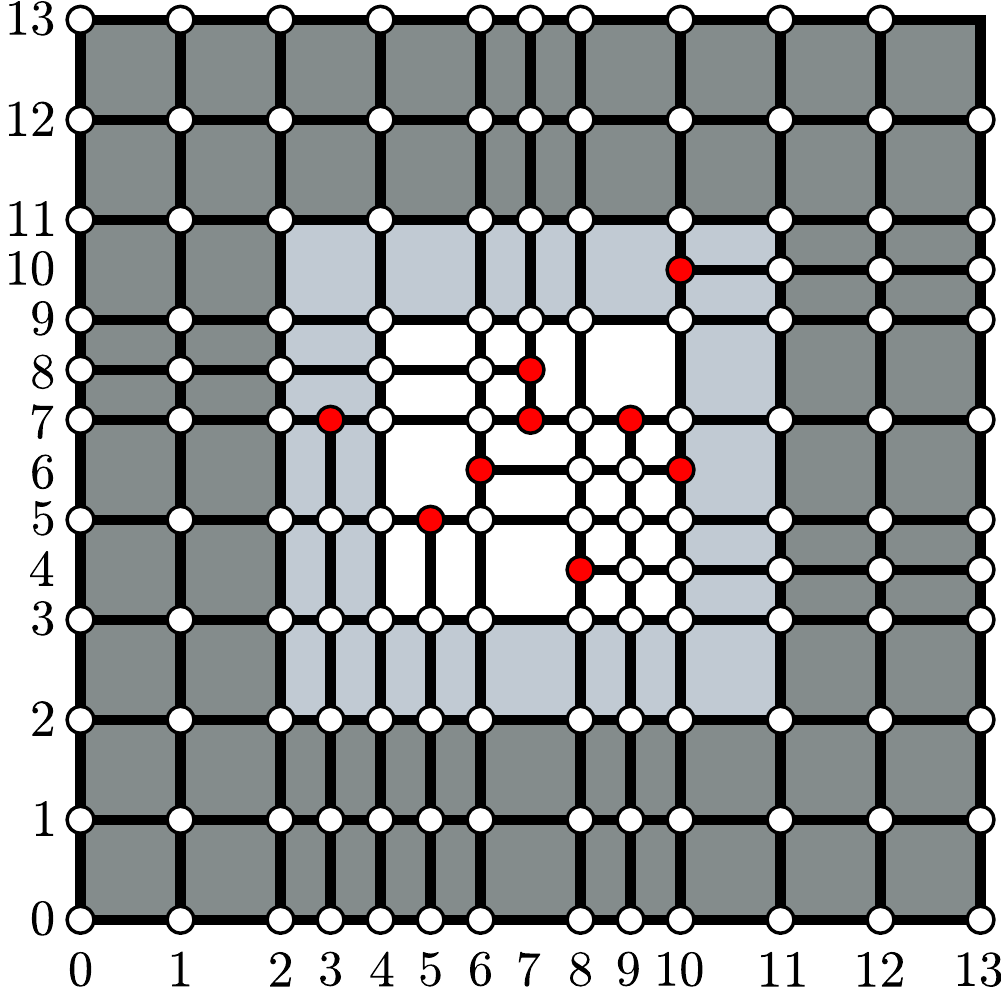}}
  \caption{A bicubic T-mesh. The frame region ($\mathsf{F}$R) is dark grey and
    the active region ($\mathsf{AR})$ is the union of the light grey
    and white regions. Note that the region with zero parametric area
    (see Section~\ref{sec:ts_basis} for a description of the
    parametric space of a T-spline) is the union of the dark and light grey
    regions.} 
\label{fig:tmesh} 
\end{figure}

\subsection{Analysis-suitable T-meshes}
\label{sec:asts}
Analysis-suitable T-splines (ASTS) were introduced
in~\cite{LiZhSeHuSc10}.  
The analysis-suitability of a T-spline is dictated by the structure
of the underlying T-mesh. We define face and edge extensions 
to be closed line segments that originate at T-junctions. For example,
to define a horizontal face extension we trace out a horizontal line
by moving in the direction of the missing  
edge until $\lfloor (p + 1)/2 \rfloor$ vertical edges or 
vertices are intersected.  To define an edge extension we trace out a
horizontal line by moving in 
the direction opposite the face extension until 
$\lceil (p - 1)/2 \rceil$ vertical edges or vertices are
intersected. A T-junction extension includes both the face and edge extensions.
Since extensions are defined as closed line segments they may 
intersect at their end points. An \textit{extended T-mesh}, $\mathsf{T}_{ext}$,
is the T-mesh formed by adding the T-junction extensions to
$\mathsf{T}$.  The collection of rectangular cells in
$\mathsf{T}_{ext}$ is denoted by $\mathsf{C}_{ext}$.
We say a T-mesh is \textit{analysis-suitable} if no horizontal T-junction 
extension intersects a vertical T-junction extension.
Face and edge extensions (along with analysis-suitability) are 
illustrated in~\Cref{fig:astmesh}.

\begin{figure}
\begin{center}
\begin{tabularx}{1\textwidth}{XX}
\begin{center}
\includegraphics[scale=0.5]{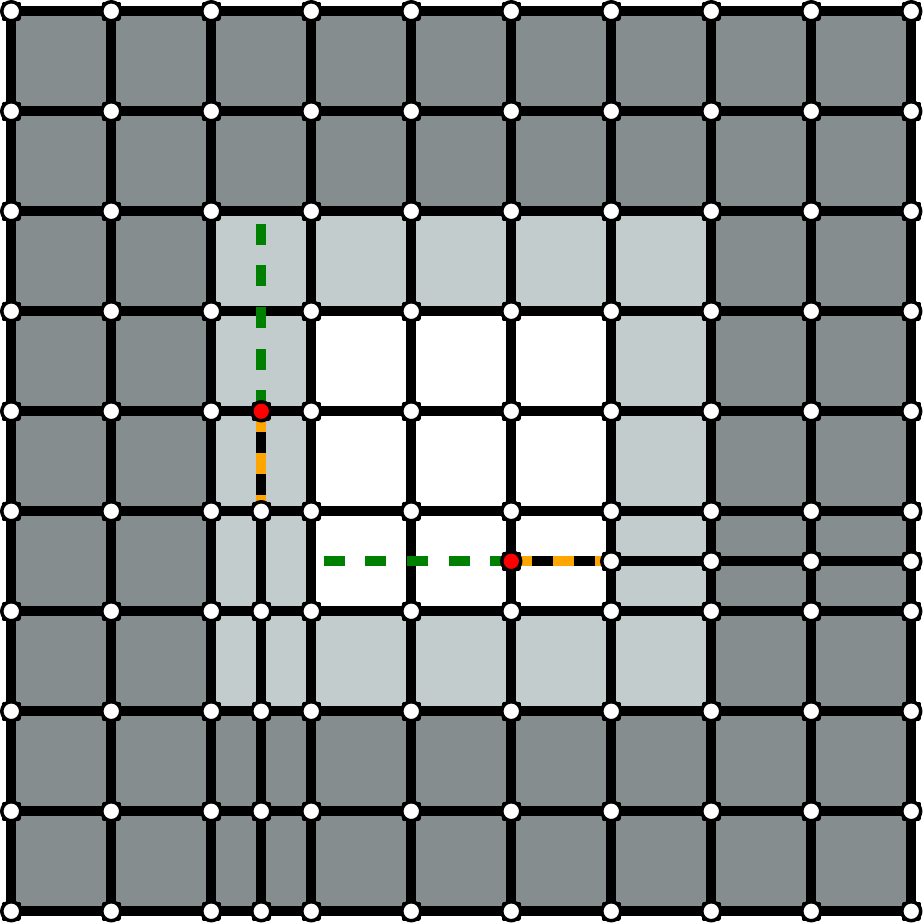}
\end{center} 
&
\begin{center}
\includegraphics[scale=0.5]{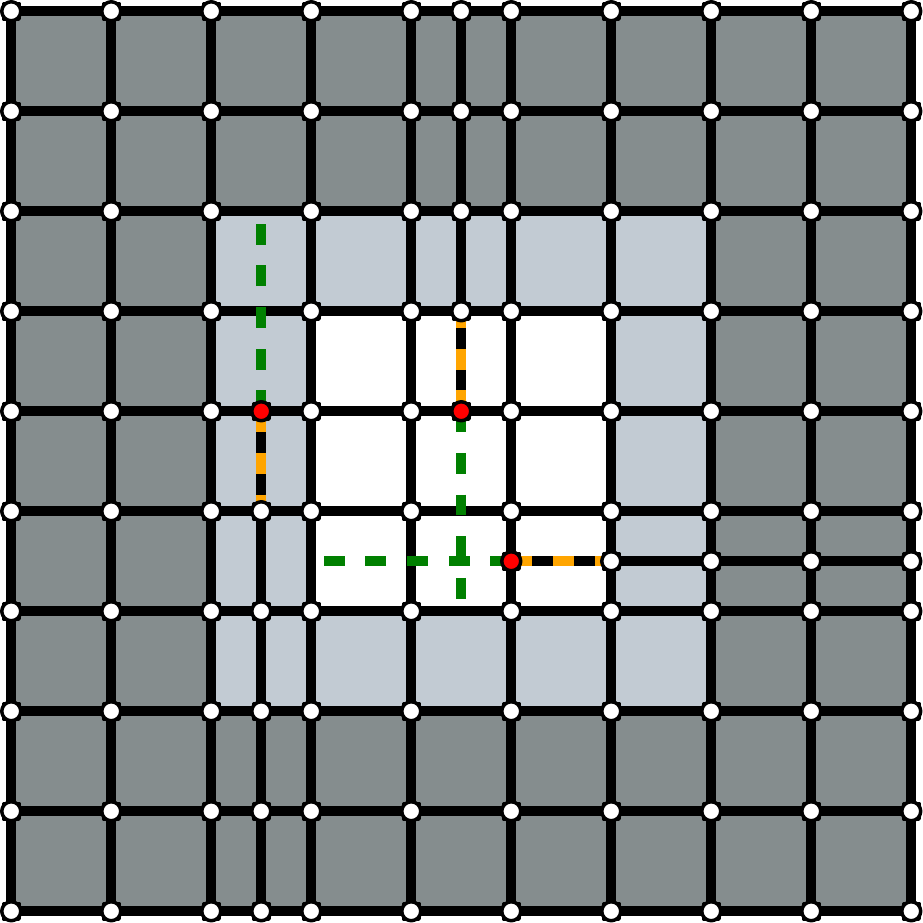}
\end{center}
\end{tabularx}
\caption{T-junction extension in two dimensions for a bicubic
  T-spline.  Face extensions are shown in green and edge extensions
  are shown in orange.  The T-mesh on the left is analysis-suitable
  whereas the T-mesh on the right is not due to intersecting face
  extensions.} 
\label{fig:astmesh}
\end{center}
\end{figure}

\subsection{Anchors}
\label{sec:mbasisdef}
Anchors are used in the construction of T-spline blending
functions. For an analysis-suitable T-mesh the anchors are
located only in the active region and are
defined as follows:
\begin{itemize}
\item if $p$ and $q$ are odd the anchors are vertices. It is written
  as $\{i\} \times \{j\}$ or equivalently $\{i,j\}$.
\item if $p$ is even and $q$ is odd the anchors are horizontal
  edges. It is written as $(i_1, i_2) \times \{j\}$.
\item if $p$ is odd and $q$ is even the anchors are vertical edges. It
  is written as $\{i\} \times (j_1, j_2)$.
\item if $p$ and $q$ are even the anchors are cells. It
  is written as $(i_1, i_2)\times (j_1, j_2)$.
\end{itemize}
\begin{figure}
\begin{center}
\begin{tabularx}{0.9\textwidth}{X}
{\begin{tabularx}{0.9\textwidth}{XX}
\begin{center}
\includegraphics[scale=0.45]{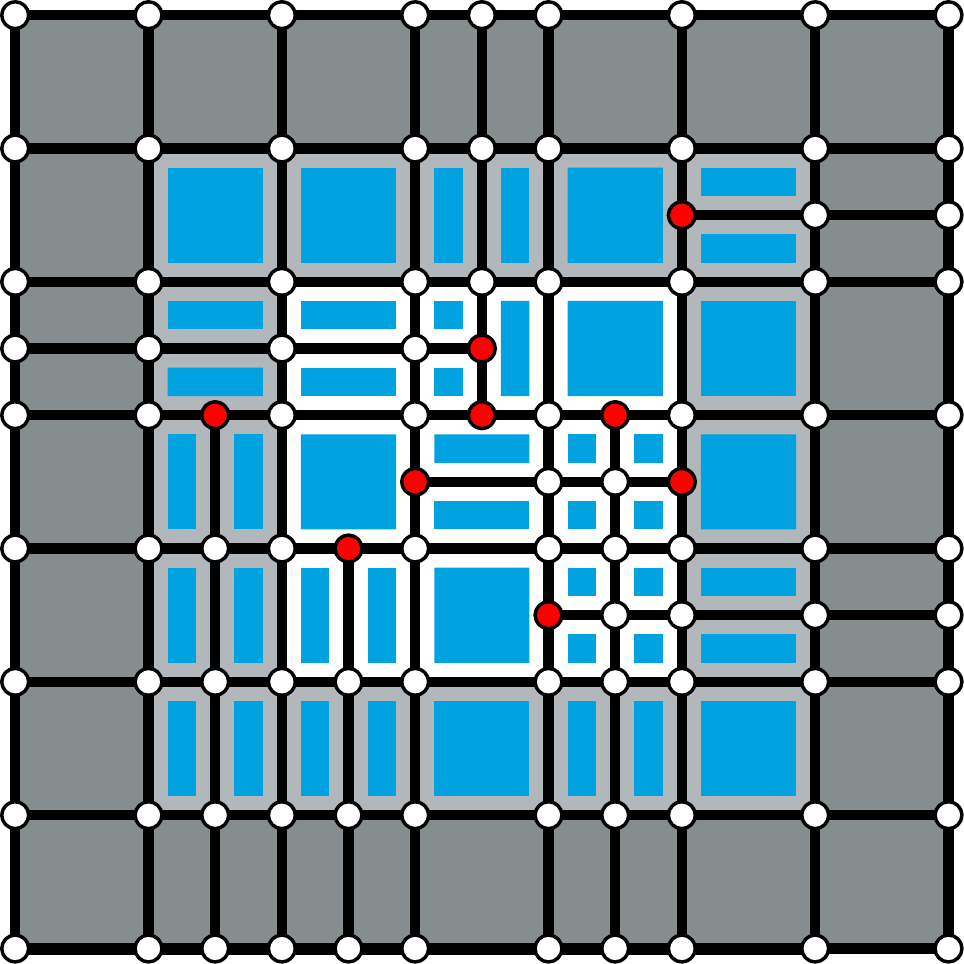}
\end{center} 
&
\begin{center}
\includegraphics[scale=0.45]{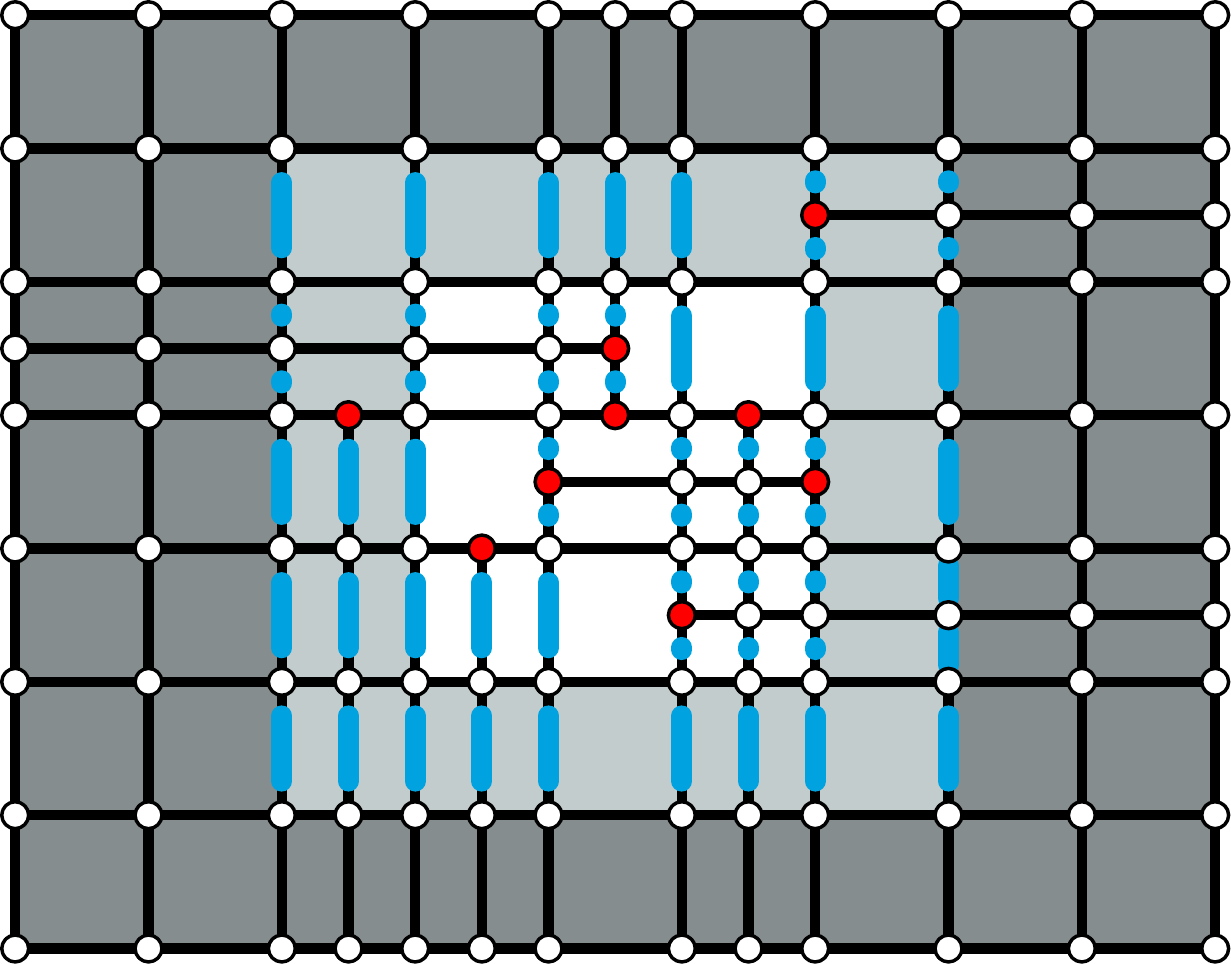}
\end{center}
\\
\vspace{-15pt}
\begin{center}
(a) $p =2,\;q=2$, anchors are the faces denoted by blue squares.
\end{center}
&
\vspace{-15pt}
\begin{center}
(b) $p=3,\;q =2$, anchors are the vertical edges denoted by bold
blue vertical lines.
\end{center}
\\
\begin{center}
\includegraphics[scale=0.6]{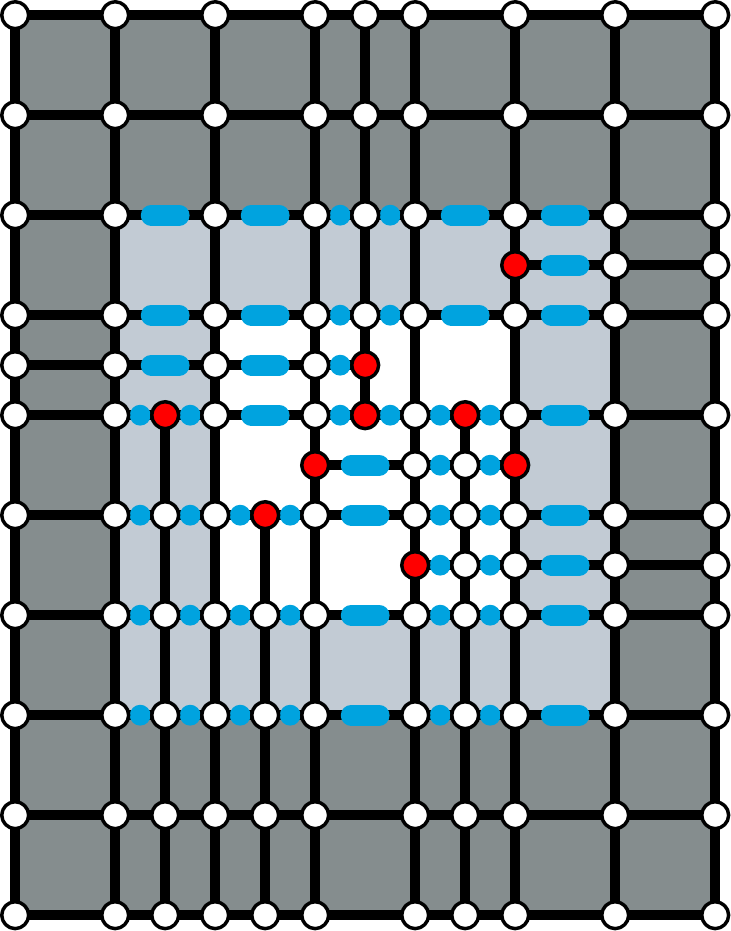}
\end{center} 
&
\begin{center}
\includegraphics[scale=0.6]{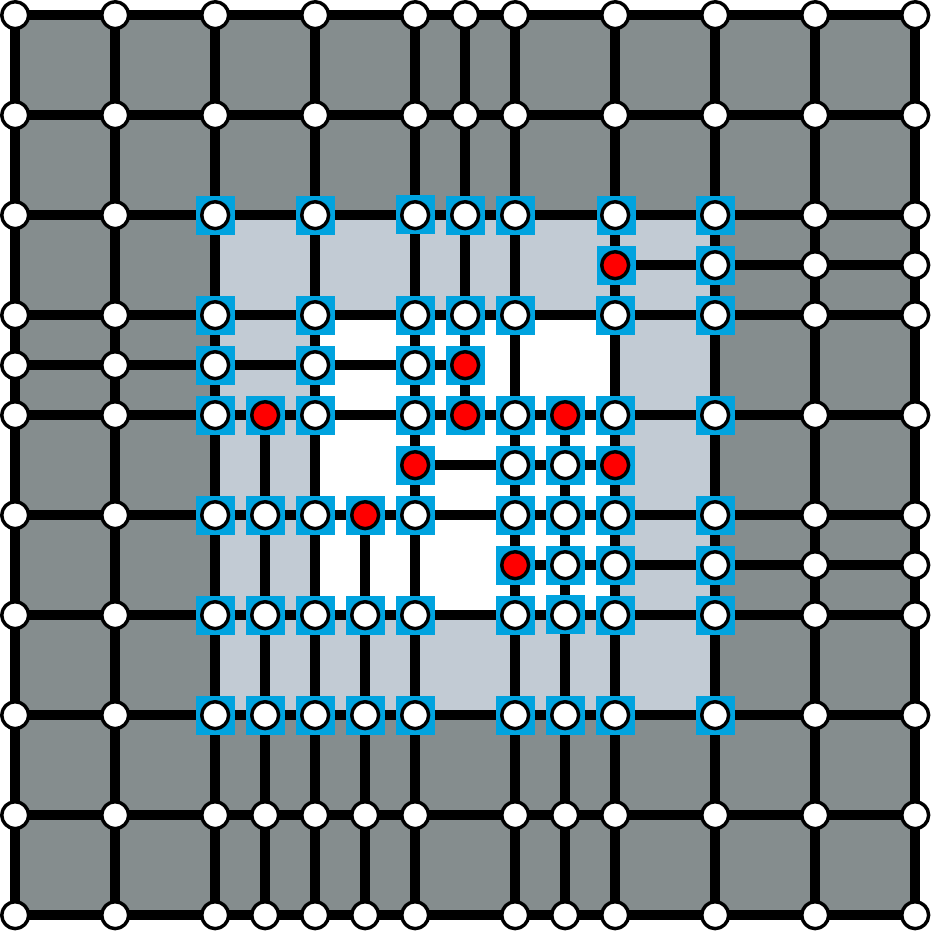}
\end{center}
\\
\vspace{-15pt}
\begin{center}
(c) $p=2,\;q=3$, anchors are the horizontal edges denoted by bold
blue horizontal lines.
\end{center}
&
\vspace{-15pt}
\begin{center}
(d) $p=3,\;q=3$, anchors are the vertices denoted by small blue
hollow squares.
\end{center}
\end{tabularx} }
\end{tabularx}
\caption{The set of anchors for varying values of $p$ and $q$.  In
  this picture, the blue regions denote anchor locations. Note that no
  anchors reside inside the frame region (dark grey region).}
\label{fig:anchorloc}
\end{center}
\end{figure}
The set of all anchors is denoted by $\mathsf{A}$. The set
of anchors for varying values of $p$ and $q$ are shown
in~\Cref{fig:anchorloc}.

\section{Analysis-suitable T-splines}
\label{sec:tspsp}
Analysis-suitable T-splines form a useful subset of T-splines.
ASTS maintain the important mathematical
properties of the NURBS basis while providing an efficient and highly
localized refinement capability. Several important properties of
ASTS have been proven:
\begin{itemize}
\item The blending functions are linearly
  independent for \textit{any} choice of knots~\cite{LiZhSeHuSc10}.
\item The basis constitutes a partition of unity~\cite{LiScSe12}.
\item Each basis function is non-negative.
\item They can be generalized to arbitrary degree~\cite{BeBuSaVa12}.
\item An affine transformation of an analysis-suitable T-spline is
  obtained by applying the transformation to the control points. We
  refer to this as affine covariance. This implies that all ``patch
  tests'' (see~\cite{Hug00}) are satisfied \textit{a priori}.
\item They obey the convex hull property.
\item They can be locally refined~\cite{SeZhBaNa03, ScLiSeHu10,LiScSe12}.
\item A dual basis can be constructed~\cite{BeBuChSa12, BeBuSaVa12}.
\item Optimal approximation~\cite{LiScSe12}.
\end{itemize}
The important properties of ASTS emanate
directly from the topological properties of the underlying
analysis-suitable T-mesh and resulting set of T-spline basis functions
constructed from it.

\subsection{T-spline basis functions, spaces, and geometry}
\label{sec:ts_basis}
Given a parametric domain $\hat{\Omega} = [0,1]^2$ we define global
horizontal and vertical \textit{open} knot vectors $h\mathsf{K} = \{s_{1}, s_{2}, \ldots,
s_{m}\}$ and $v\mathsf{K} = \{t_{1}, t_{2},
\ldots, t_{n}\}$, respectively. In other words, 
$$0=s_{1} = \ldots = s_{p+1} <
s_{p+2} \leq \ldots \leq
s_{m-p-1} < s_{m-p} = \ldots = s_{m} = 1$$
and
$$0=t_{1} = \ldots = t_{q+1} <
t_{q+2} \leq \ldots \leq
t_{n-q-1} < t_{n-q} = \ldots = t_{n} = 1.$$
As a result, every T-mesh vertex $V = \{i,j\} \in \doublehat{\Omega}$
has the parametric representation $\{s_{i}, t_{j}\} 
\in \hat{\Omega}$. For reasons that will become apparent, we refer to a
cell $C \in \mathsf{C}_{ext}$ with \textit{positive parametric area} as a B\'{e}zier 
element. The parametric domain of a B\'{e}zier element is denoted by
$\hat{\Omega}^e$. The set of all B\'{e}zier elements in a T-mesh is
denoted by $\mathsf{E}$. 

For each anchor $A = a \times b \in \mathsf{A}$ we construct horizontal and vertical local index
vectors $\{i_1, \ldots, i_{p+2}\}$ and
$\{j_1, \ldots, j_{q+2}\}$ made up of 
increasing (but not necessarily consecutive) indices in
$h\mathsf{I}$ and $v\mathsf{I}$, respectively. Note that for $p$
odd, $\{i_{(p+3)/2}\} = a$, and for $p$ even,
$(i_{(p/2)+1}, i_{(p/2)+2}) = a$.
Similar relationships hold for $q$. The
procedure for determining local index vectors is shown
in~\cref{fig:local-func-examples} for various polynomial degrees.  To clarify this
procedure we describe the anchors and associated local index vectors.  
In~\Cref{fig:local-func-examples}a, ${p}= 2$ and $q=2$ and thus the example anchor is the cell
$(4, 8) \times (3, 7)$.  
The horizontal local index vector is $\{3, 4, 8, 10\}$ 
and the vertical local index vector is $\{2, 3, 7, 9\}.$  
We observe that subset of the vertical skeleton located at index
 $9$ does not span the entire 
height of the anchor cell, hence it is not included in the horizontal 
local index vector; similarly since subset of the horizontal skeleton located at index $8$ 
does not span the entire width of the cell it not included in 
the vertical local index vector.
In~\Cref{fig:local-func-examples}b, ${p}= 3$ and $q=2$ thus the 
example anchor is the vertical edge
$\{9\} \times (7, j)$.  The horizontal local index vector 
is $\{5, 8, 9, 11, 12\}$ and the vertical local index vector 
is $\{6, 7, 9, 10\}.$ Similar to the prior
example the subset of the vertical skeleton at index $8$ does not span the entire 
height of the anchor edge, hence it is not included in the 
horizontal local index vector. \Cref{fig:local-func-examples}c 
shows the case where ${p}= 2$ and $q=3$ thus the example anchor is the horizontal edge
$(4, 7) \times \{8\}$.  The horizontal local index vector is 
$\{3, 4, 7, 8\}$ and the vertical local index vector is 
$\{3, 4, 8, 9, 10\}.$ In the last case, shown 
in~\Cref{fig:local-func-examples}d, ${p}= 3$ and $q=3$ thus the 
example anchor is the vertex
$\{8\} \times \{8\}$.  The horizontal local 
index vector is $\{4, 5, 8, 9, 11\}$ 
and the vertical local index vector is $\{3, 4, 8, 9, 10\}.$ 

\begin{figure}
  \begin{center}
    \begin{tabularx}{0.9\textwidth}{XX}
      \begin{center}
        \includegraphics[scale=0.6]{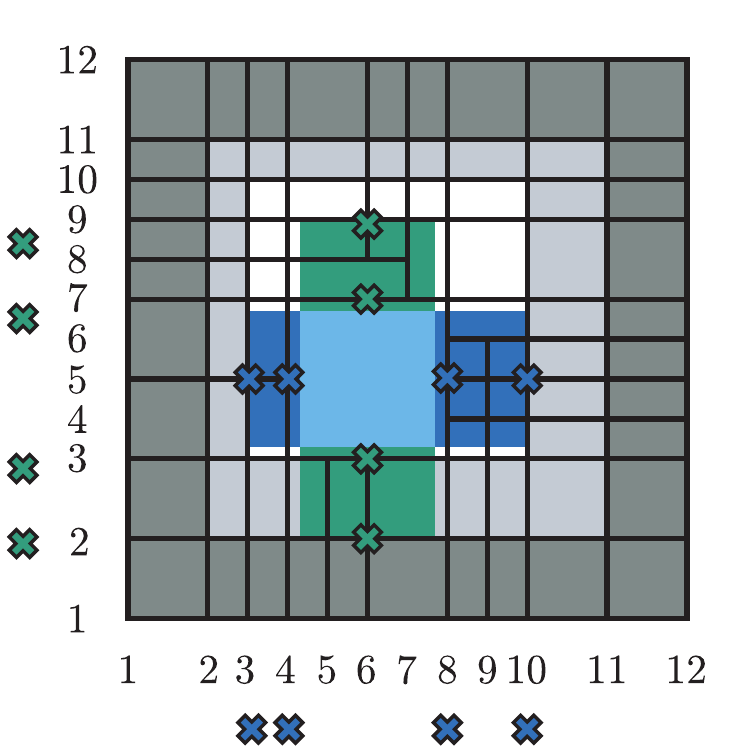}
      \end{center}
      &
      \begin{center}
        \includegraphics[scale=0.6]{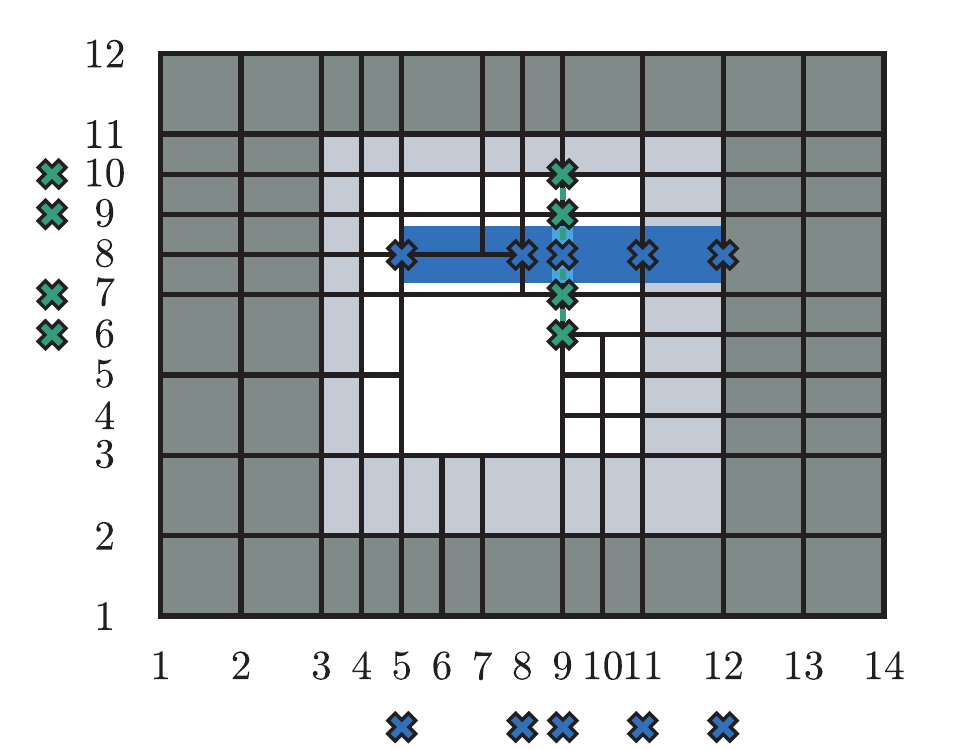}
      \end{center}
      \\
      \vspace{-30pt}
      \begin{center}
        (a) $p =2,\;q=2$
      \end{center}
      &
      \vspace{-30pt}
      \begin{center}
        (b) $p=3,\;q =2$
      \end{center}
      \\
      \vspace{-30pt}
      \begin{center}
        \includegraphics[scale=0.6]{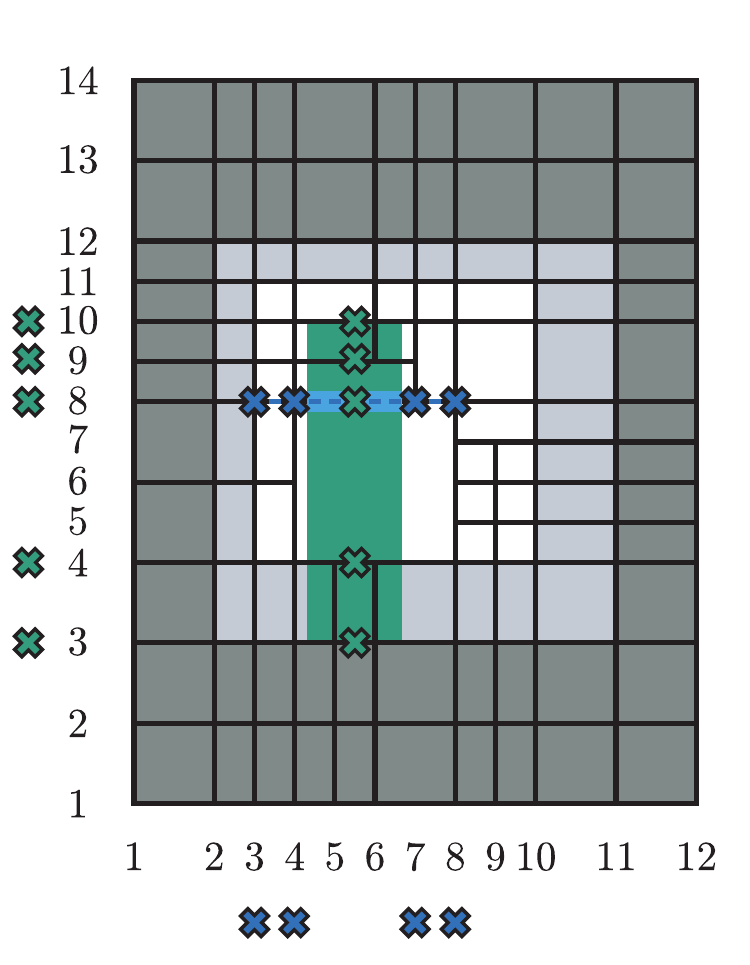}
      \end{center}
      &
      \vspace{-30pt}
      \begin{center}
        \includegraphics[scale=0.6]{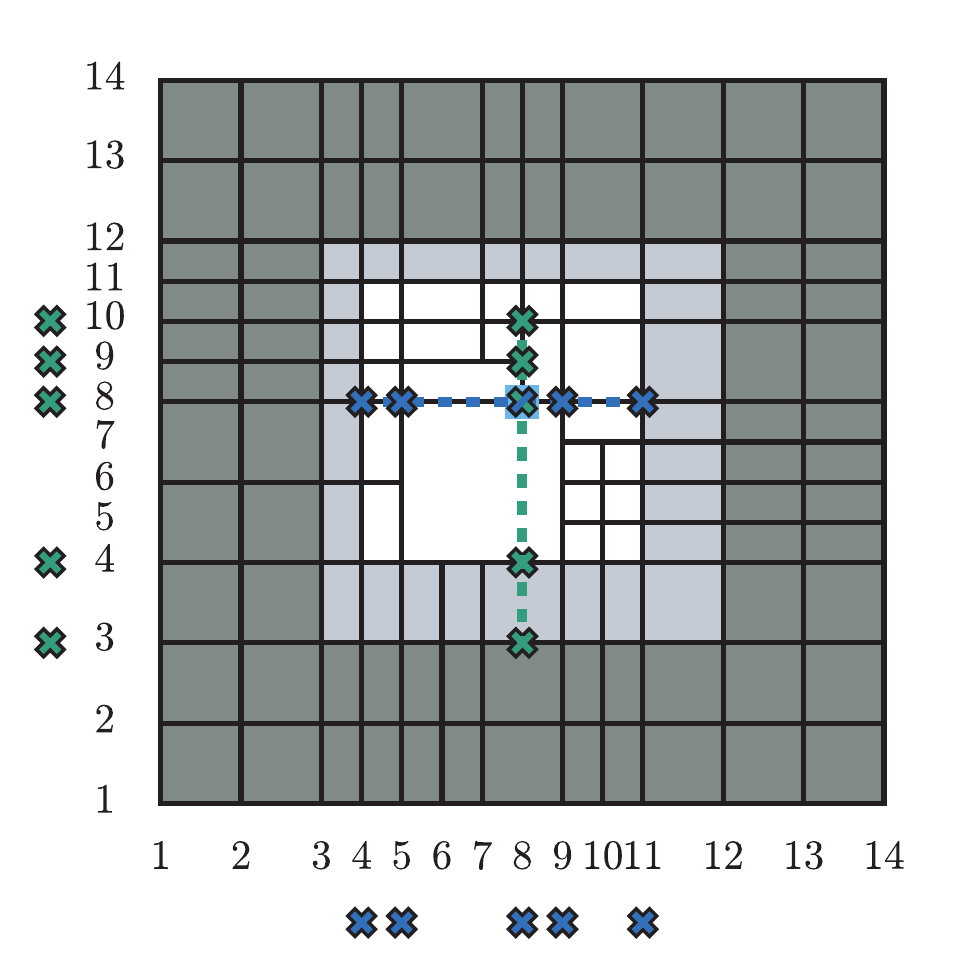}
      \end{center}
      \\
      \vspace{-30pt}
      \begin{center}
        (c) $p=2,\;p=3$
      \end{center}
      &
      \vspace{-30pt}
      \begin{center}
        (d) $p=3,\;q=3$
      \end{center}
    \end{tabularx}
    \caption{Examples for how local index vectors are constructed for
      T-spline basis functions of varying values of the polynomial degrees
      $p$ and $q$.  The function anchors are marked with light blue.
      The horizontal line used to determine the horizontal local index vector is
      indicated with a dark blue dashed line.  The vertical line used to
      calculate the vertical local index vector is marked with a green dashed
      line.  The indices that contribute to the local index vectors are
      marked with $\times$.} 
    \label{fig:local-func-examples}
  \end{center}
\end{figure}
The T-spline blending function $N_{A}^{p, q}(s, t)$ is given by
\begin{equation}
N_{A}^{p, q}(s, t) := N_{A}^{p}
[s_{i_1}, \ldots, s_{i_{p+2}}](s) N_{A}^{q}[t_{j_1}, \ldots, t_{j_{q+2}}](t) \quad 
\forall (s,t) \in \hat{\Omega}
\end{equation}
where $N_{A}^{p}[s_{i_1}, \ldots, s_{i_{p+2}}](s)$ and $N_{A}^{q}[t_{j_1}, \ldots, t_{j_{q+2}}](t)$ are B-spline
basis functions associated with the local knot vectors
$[s_{i_1}, \ldots, s_{i_{p+2}}] \subset h\mathsf{K}$ and
$[t_{j_1}, \ldots, t_{j_{q+2}}] \subset v\mathsf{K}$.

We define $\mathsf{N}$ to be the set of all basis functions associated with a T-mesh.
Given a weight $w_{A} \in \mathbb{R}^+$ for each $A
\in \mathsf{A}$ a
\textit{rational} T-spline basis function $R_{A}^{p,q}
: \hat{\Omega} \rightarrow \mathbb{R}$ can be written as 
\begin{align}
R_{A}^{p,q}(s,t) &= \frac{N_{A}^{p,q}(s,t)}
{\sum_{A \in
    \mathsf{A}} w_{A}
  N_{A}^{p,q}(s,t)} \\
&= \frac{N_{A}^{p,q}(s,t)}{w(s,t)}
\end{align}
where $w(s,t) : \hat{\Omega} \rightarrow \mathbb{R}$ is called a
weight function. For clarity we will often suppress the dependence on
the polynomial degrees $p,q$ and write the basis function as $R_A$.
\Cref{fig:tmesh_cases} shows several
T-spline basis functions plotted in the parametric domain $\hat{\Omega}$. 
\begin{figure}
\begin{center}
\begin{tabularx}{0.9\textwidth}{XX}
\begin{center}
  \includegraphics[scale=0.85]{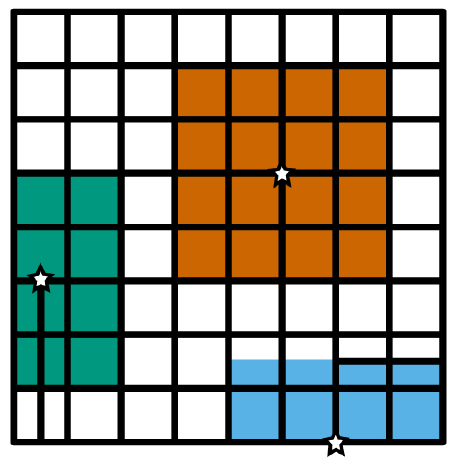}
\end{center} 
&
\begin{center}
 \includegraphics[scale=1.1]{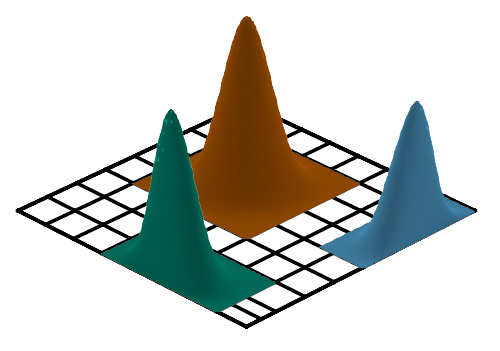}
\end{center}
\end{tabularx}
\caption{T-spline basis functions and supports in the parametric domain
  $\hat{\Omega}$ for $p = q =3$. Anchor locations are denoted by a star.}
\label{fig:tmesh_cases}
\end{center}
\end{figure}
An ASTS space,
denoted by $\mathcal{T}$, is the span of the blending functions in
$\mathsf{N}$ constructed from an analysis-suitable T-mesh.
Given vector valued control points, $\mathbf{P}_{A} \in
\mathbb{R}^{n}$, $n=2$ or $3$, the geometry of a T-spline can be written as
\begin{equation}
\mathbf{x}(s,t) = \sum_{A \in \mathsf{A}}
  \mathbf{P}_{A} w_{A} R_A(s,t).
\end{equation}

\section {Local linear independence of analysis-suitable T-splines}
\label{sec:asts_lli}
The local linear independence of analysis-suitable T-splines is an
important theoretical result in its own right and is critical for our definition of
hierarchical analysis-suitable T-splines in Section~\ref{sec:hts}. The (global)
linear independence of analysis-suitable T-splines was first shown
in~\cite{LiZhSeHuSc10}. Local linear independence is a stronger result
than global linear independence and is the
notion of linear independence enjoyed by standard $C^0$ finite element
bases.  Local linear independence implies that the finite element basis is
linearly independent over \textit{every} element domain. For smooth
locally-refined bases this element-level notion of linear independence
may be lost.
\label{sec:lli}
\subsection{Preliminaries}
Given a knot vector  $\{s_1 \leq s_2 \leq \ldots \leq
s_{m}\}$ and degree $p$ we can recursively define $n = m - p - 1$
B-spline basis functions $N_{i,p}(s)$ as follows:
\begin{align}
  N_{i}^0(s) = 
  \begin{cases}
    1 & s_{i} \leq s < s_{i+1} \\
    0 & \text{otherwise}.
  \end{cases}
\end{align}
\begin{align}
  N_i^p(s) = \frac{s - s_{i}}{s_{i+p} - s_{i}}N_i^{p-1}(s) 
  + \frac{s_{i+p+1} - s}{s_{i+p+1}-s_{i+1}}N_{i+1}^{p-1}(s).
\end{align}
The de Boor algorithm~\cite{Farin99} provides a standard method for
 evaluating a B-spline, although other possibilities
exist~\cite{Se10,Ra89}. A B-spline basis function $N_i^p(s)$ can
also be denoted by $N[s_i, \ldots,
s_{i+p+1}](s)$. Following~\cite{Schu93}, there exist dual
functionals $\lambda_{j,p} = \lambda[s_j, \ldots, s_{j+p+1}]$ such that
$\lambda_{j,p}(N_i^p(s)) = \delta_{j,i}$ where the Kronecker delta
$\delta_{i,j}$ is zero when $i \neq j$ and one otherwise. We have the following two results:
\begin{lemma}
\label{lem:db1} 
Suppose $f(s) = \sum_{i = 1}^n c_i N_i^p(s)$. 
Then $c_j = \lambda_j^p(f(s)).$
\end{lemma}
\begin{lemma}
\label{lem:db2} 
For any function $f(s) = \sum_{i = 1}^n c_i N_i^p(s)$, if $f(s) =
0$, $s_k \leq s \leq s_{k+1}$ then for $j=k-p, k-p+1, \ldots, k$ we
have that $\lambda_j^p(f(s)) = 0.$
\end{lemma}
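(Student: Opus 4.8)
The plan is to reduce the statement to the local linear independence of ordinary univariate B-splines on a single knot span, and then combine this with Lemma~\ref{lem:db1}. First I would observe that, by Lemma~\ref{lem:db1}, the dual functional merely extracts the coefficient, $\lambda_j^p(f) = c_j$. Hence it suffices to prove that if $f$ vanishes identically on the span $[s_k, s_{k+1}]$ (which I take to be nondegenerate, $s_k < s_{k+1}$, since a B\'{e}zier element has positive parametric length), then $c_j = 0$ for every $j \in \{k-p, \ldots, k\}$.

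The first key step is the local support property of B-splines: $N_i^p$ is supported on $[s_i, s_{i+p+1}]$, so $N_i^p$ can be nonzero on $[s_k,s_{k+1}]$ only when $s_i < s_{k+1}$ and $s_{i+p+1} > s_k$, i.e.\ $k - p \le i \le k$. Restricting the hypothesis $f \equiv 0$ to this span therefore leaves only these $p+1$ active indices:
\begin{equation}
0 = f(s) = \sum_{i=k-p}^{k} c_i\, N_i^p(s), \qquad s \in [s_k, s_{k+1}].
\end{equation}
These are exactly the indices $\{k-p, \ldots, k\}$ appearing in the statement, so the claim amounts to concluding that every coefficient in this reduced sum vanishes.

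The heart of the argument is then to show that the $p+1$ restrictions $\{N_i^p|_{[s_k,s_{k+1}]}\}_{i=k-p}^{k}$ are linearly independent. On a single knot span each $N_i^p$ is a polynomial of degree $p$, so these functions lie in $\mathbb{P}_p$, a space of dimension $p+1$. I would argue that they in fact span $\mathbb{P}_p$ on the span --- for instance via Marsden's identity, which writes every monomial (hence every polynomial of degree $\le p$) as a combination of the active B-splines --- so that $p+1$ spanning functions in a $(p+1)$-dimensional space must form a basis and are therefore linearly independent. Consequently the displayed vanishing forces $c_{k-p} = \cdots = c_k = 0$, and by Lemma~\ref{lem:db1} we conclude $\lambda_j^p(f) = c_j = 0$ for $j = k-p, \ldots, k$.

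I expect the main obstacle to be the clean justification of the single-span linear independence (equivalently, the reproduction of $\mathbb{P}_p$ by the active B-splines). An attractive alternative that sidesteps it is to invoke the de Boor--Fix representation of the dual functionals from~\cite{Schu93}: $\lambda_j^p(f)$ can be evaluated from the derivatives of $f$ at any single point $\tau$ in the open support $(s_j, s_{j+p+1})$, independently of the choice of $\tau$. For $j \in \{k-p, \ldots, k\}$ one has $[s_k, s_{k+1}] \subset [s_j, s_{j+p+1}]$, so choosing $\tau \in (s_k, s_{k+1})$ --- where $f$ and all its derivatives vanish --- immediately yields $\lambda_j^p(f) = 0$ with no separate linear-independence argument. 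I would present whichever route is most consistent with the level of detail the authors have already committed to for the dual functionals.
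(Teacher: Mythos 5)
Your argument is correct, and there is in fact nothing in the paper to compare it against: the paper offers no proof of this lemma, quoting it (together with \cref{lem:db1}) as part of the standard dual-functional machinery for univariate B-splines imported from~\cite{Schu93}. Your main route --- reduce to the coefficients via \cref{lem:db1}, use local support to restrict the hypothesis to the $p+1$ B-splines $N_{k-p}^p,\ldots,N_k^p$ active on the span, and then kill the coefficients by the linear independence of these restrictions on a nondegenerate span (polynomial reproduction via Marsden's identity, so $p+1$ spanning functions of $\mathbb{P}_p$ form a basis) --- is a complete and standard proof. Your alternative via the de Boor--Fix point-evaluation form of $\lambda_j^p$ is equally valid and shorter: for $j=k-p,\ldots,k$ one has $(s_k,s_{k+1})\subset(s_j,s_{j+p+1})$, so the functional can be evaluated at a point where $f$ and all its derivatives vanish. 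The one caveat you flag is genuinely needed: the statement is false as written if $s_k=s_{k+1}$, so the nondegeneracy $s_k<s_{k+1}$ must be read into the lemma; this is harmless because in its only application (\cref{thm:lli}) the lemma is invoked on the knot spans of a B\'{e}zier element, which by definition has positive parametric area.
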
 
As shown in~\cite{BeBuSaVa12, BeBuChSa12} the notion of a dual basis
can be extended to analysis-suitable T-splines.
\begin{theorem}
\label{thm:glidb}
Given an analysis-suitable T-mesh and associated basis functions
$\{N_{A}^{p, q}: {A} \in \mathsf{A}_{p,q}\}$
the set of functionals $\{\lambda_{A}^{p,q}: {A} \in
\mathsf{A}_{p,q}\}$ form a dual basis. Specifically, we have that
$$\lambda_A^{p,q} = \lambda_A[s_{i_1}, \ldots, s_{i_{p+2}}] \otimes
\lambda_A[ t_{j_1}, \ldots, t_{j_{q+2}}]$$
where $\lambda_A[s_{i_1}, \ldots, s_{i_{p+2}}]$ and $\lambda_A[t_{j_1},
\ldots, t_{j_{q+2}}]$ are dual basis functions corresponding to
univariate B-splines~\cite{Schu93} with local knot vectors
$\{s_{i_1}, \ldots, s_{i_{p+2}}\}$ and $\{t_{j_1},
\ldots, t_{j_{q+2}}\}$, respectively.
\end{theorem}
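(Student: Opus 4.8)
The plan is to verify the defining property of a dual basis directly, namely $\lambda_A^{p,q}(N_B^{p,q}) = \delta_{A,B}$ for every pair of anchors $A, B \in \mathsf{A}_{p,q}$, and to exploit the tensor-product structure of both the blending functions and the functionals throughout. Since each $N_B^{p,q}(s,t) = N_B^{p}(s)\,N_B^{q}(t)$ is separable and $\lambda_A^{p,q} = \lambda_A^p \otimes \lambda_A^q$ by hypothesis, I would first observe that the functional acts factorwise on a rank-one element,
\begin{equation}
\lambda_A^{p,q}\bigl(N_B^{p,q}\bigr) = \lambda_A^{p}\bigl(N_B^{p}\bigr)\,\lambda_A^{q}\bigl(N_B^{q}\bigr),
\end{equation}
reducing the bivariate claim to two univariate evaluations, where $\lambda_A^{p} = \lambda[s_{i_1},\ldots,s_{i_{p+2}}]$ and $\lambda_A^{q} = \lambda[t_{j_1},\ldots,t_{j_{q+2}}]$ are the Schumaker dual functionals built from the local knot vectors of $A$.

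The diagonal case $A = B$ is immediate: $\lambda_A^p$ and $N_A^p$ share the local knot vector $\{s_{i_1},\ldots,s_{i_{p+2}}\}$, so $\lambda_A^p(N_A^p) = 1$ by the univariate duality $\lambda_{j,p}(N_i^p) = \delta_{j,i}$ recalled before \cref{lem:db1}, and likewise $\lambda_A^q(N_A^q) = 1$; the product is thus $1$. For the off-diagonal case $A \neq B$ it therefore suffices to show that at least one of the two univariate factors vanishes, and this is precisely where analysis-suitability must enter.

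To dispatch the off-diagonal case I would split on whether the open supports of $N_A$ and $N_B$ meet. If they are interior-disjoint, then either the horizontal supports $\mathrm{supp}(N_A^p), \mathrm{supp}(N_B^p)$ or the vertical ones are disjoint; invoking the locality of the Schumaker functional—$\lambda_A^p(f)$ depends only on the restriction of $f$ to $(s_{i_1}, s_{i_{p+2}}) = \operatorname{int}\mathrm{supp}(N_A^p)$, on which the disjoint partner is identically zero—makes the corresponding factor vanish. If the supports overlap, I would invoke the equivalence between analysis-suitability and dual-compatibility established in \cite{BeBuSaVa12,BeBuChSa12}: for any two overlapping blending functions on an analysis-suitable T-mesh, the horizontal local knot vectors of $A$ and $B$ are consecutive subsequences of a common horizontal knot vector, and likewise in the vertical direction. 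On such a common knot vector both $N_A^p$ and $N_B^p$ are genuine B-splines, and because the Schumaker functional depends only on the $p+2$ knots defining it, $\lambda_A^p$ coincides with the common-knot-vector dual functional anchored at $A$. Then \cref{lem:db1} yields $\lambda_A^p(N_B^p)$ equal to $1$ precisely when the horizontal local knot vectors of $A$ and $B$ coincide, and the analogous statement holds vertically. Since distinct anchors carry distinct pairs of local knot vectors, $A \neq B$ forces a difference in at least one direction, so at least one factor is zero and the product vanishes.

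The main obstacle is the overlapping subcase, and specifically the use of the analysis-suitability hypothesis to guarantee the common horizontal and vertical knot vectors into which both functions embed. The geometric content of this step is that, on an analysis-suitable T-mesh, no horizontal T-junction extension crosses a vertical one, which is exactly what rules out the incompatible configuration in which the two local knot vectors differ in both directions yet neither univariate factor vanishes. Establishing this equivalence carefully—including the bookkeeping that matches each anchor's extracted local knot vector with a consecutive subsequence of the common knot vector so that the Schumaker functionals agree—is the technical heart of the argument; the tensor-product reduction and the diagonal case are routine once it is in place.
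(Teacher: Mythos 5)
First, a point of reference: the paper does not actually prove \cref{thm:glidb} --- it is imported verbatim from \cite{BeBuSaVa12,BeBuChSa12} with no proof environment --- so there is no internal argument to compare against beyond that citation. Your skeleton (reduce $\lambda_A^{p,q}(N_B^{p,q})=\delta_{A,B}$ to two univariate evaluations via the tensor-product structure, dispose of the diagonal case by the univariate duality recalled before \cref{lem:db1}, and invoke analysis-suitability only in the off-diagonal case) is exactly the architecture of the proof in those references, so the overall route is the right one.

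There is, however, a genuine gap in the off-diagonal overlapping case, and it sits precisely at the step you identify as the technical heart. You assert that for two blending functions with overlapping supports on an analysis-suitable T-mesh, the local knot vectors of $A$ and $B$ embed as consecutive subsequences of a common knot vector in the horizontal direction \emph{and likewise in the vertical direction}. That is strictly stronger than what analysis-suitability (equivalently, dual-compatibility) delivers, and it is false in general: with T-junctions present one routinely has pairs whose local knot vectors in one of the two directions cannot be merged into any common knot vector (e.g.\ index patterns $\{\ldots,2,4,\ldots\}$ versus $\{\ldots,3,4,\ldots\}$, neither of which is a consecutive subsequence of the union $\{\ldots,2,3,4,\ldots\}$). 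The correct pairwise property is one-directional: for every pair of distinct anchors there exists \emph{at least one} direction in which the two local knot vectors both \emph{differ} and \emph{overlap}, and it is in that direction that \cref{lem:db1} kills the corresponding univariate factor. Your closing deduction --- ``$A\neq B$ forces a difference in at least one direction, so at least one factor is zero'' --- silently requires the direction of difference to coincide with a direction of overlap; with only the true one-directional property, the argument as written cannot exclude the configuration in which the knot vectors agree in the overlapping direction and differ only in a non-overlapping one. Your too-strong both-directions claim is what papers over this, but since that claim fails, the proof needs to invoke dual-compatibility in its precise pairwise form (difference \emph{and} overlap in a common direction); with that substitution the tensor-product reduction, the diagonal case, and the disjoint-support case all stand.
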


\subsection{Proof of local linear independence}
\begin{lemma}
\label{lem:bez1}
Let $C \in \mathsf{C}_{ext}$ be a cell from an analysis-suitable
extended T-mesh with vertices $ \{i_l, j_b\},$  $\{i_r, j_b\},$  $\{i_r, j_t\},$ and
$\{i_l, j_t\}$. If the basis function anchored at ${A} \in
\mathsf{A}_{p,q}$ with local index vectors $\{i_{1}, \ldots,
i_{p+2}\}$ and 
$\{j_{1}, \ldots, j_{q+2}\}$ is non-zero over $C$ then there must either exist an
integer $k$, $1 \leq k \leq p+1$, such that $i_l = i_{k}$ and
$i_r = i_{k+1}$ or an integer $\ell$, $1 \leq \ell \leq q+1$,
such that $j_b = j_{\ell}$ and $j_t = j_{\ell+1}$. 
\end{lemma}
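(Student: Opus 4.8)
The plan is to reduce the bivariate statement to the univariate knot structure in each parametric direction and then argue by contradiction using analysis-suitability. Since $N_A^{p,q}$ is the tensor product $N_A^p[s_{i_1},\ldots,s_{i_{p+2}}]\,N_A^q[t_{j_1},\ldots,t_{j_{q+2}}]$, its being non-zero over $C$ is equivalent to each univariate factor being non-vanishing over the corresponding projection of $C$. In particular $C$ lies in the support $[s_{i_1},s_{i_{p+2}}]\times[t_{j_1},t_{j_{q+2}}]$, and the horizontal and vertical projections $[s_{i_l},s_{i_r}]$ and $[t_{j_b},t_{j_t}]$ meet the open support intervals. The conclusion to be proved is exactly that the horizontal projection of $C$ coincides with a single knot span $[s_{i_k},s_{i_{k+1}}]$ of the horizontal local knot vector, or the vertical projection coincides with a single span $[t_{j_\ell},t_{j_{\ell+1}}]$ of the vertical one. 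I would treat the generic case of a B\'ezier element (positive parametric area, so $i_l<i_r$ and $j_b<j_t$ as indices) first, and dispose of degenerate zero-area cells and repeated-knot configurations separately, where the statement either reduces to the non-degenerate case or holds trivially.

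First I would suppose, for contradiction, that \emph{neither} alignment holds, and analyze the horizontal failure. Because $C$ is contained in the support and is an \emph{open} cell of $\mathsf{T}_{ext}$, its interior contains no edge or T-junction extension of the extended mesh. Failure of the horizontal consecutive-pair condition then forces one of two situations: either some local knot $s_{i_k}$ satisfies $s_{i_l}<s_{i_k}<s_{i_r}$, so that an interior vertical knot line of $N_A^{p,q}$ crosses $C$ horizontally; or a vertical boundary line of $C$ lies strictly inside a knot span $(s_{i_k},s_{i_{k+1}})$ of $N_A^{p,q}$, so that $C$ is a proper horizontal sub-interval of that span. I would then translate each situation into a concrete feature of $\mathsf{T}_{ext}$ using the construction of the local index vector, which records the vertical-skeleton segments met by a horizontal ray cast from the anchor. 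In the first situation the knot line at $s_{i_k}$ comes from such a vertical segment but cannot cross the interior of $C$, hence it must terminate at a T-junction and be continued only by a vertical T-junction extension; in the second, the vertical boundary edge of $C$ is \emph{absent} from the local index vector, so it fails to reach the anchor ray and likewise terminates at a T-junction carrying a vertical extension. Either way the horizontal failure manifests as a vertical T-junction extension of $\mathsf{T}_{ext}$ reaching into the knot-span rectangle that contains $C$.

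By the identical argument applied to the vertical direction, the failure of the vertical consecutive-pair condition produces a horizontal T-junction extension of $\mathsf{T}_{ext}$ reaching into the same rectangle. I would then localize to that single knot-span rectangle of $N_A^{p,q}$ containing $C$ and show that the vertical and horizontal extensions just produced are forced to intersect inside it, contradicting analysis-suitability and completing the proof.

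The hard part will be the faithful bookkeeping in the middle step: correctly converting the combinatorial ray-casting that defines the local index vectors into statements about which segments and extensions actually appear in $\mathsf{T}_{ext}$ near $C$, and organizing the several sub-cases (interior knot line versus offset boundary edge, and the position of the anchor above, below, or level with $C$) so they can be handled uniformly. A genuine subtlety is that T-junction extensions are \emph{closed} segments permitted to touch at their endpoints; to invoke analysis-suitability I must verify that the forced meeting of the horizontal and vertical extensions is a true transversal crossing in the interior of at least one of them, rather than a mere contact at endpoints. Choosing to localize in the knot-span rectangle containing $C$, through which both extensions must pass strictly, is what I expect to make this crossing honest and let the sub-cases collapse into a single argument.
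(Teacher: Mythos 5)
Your overall skeleton --- argue by contradiction and derive a forbidden crossing of perpendicular T-junction extensions --- is the same as the paper's, but you organize it differently: the paper fixes a single corner of the offending cell that falls strictly inside an open local knot-span rectangle and classifies \emph{what that corner is} (an original T-mesh vertex, a crossing of two extensions, or a crossing of an extension with an edge), whereas you propose to manufacture one vertical extension from the horizontal failure and one horizontal extension from the vertical failure and then force them to meet. Two of the paper's three cases are indeed disposed of exactly as you intend, so the plan is not off base.

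The genuine gap is that the step you defer as ``bookkeeping'' is actually the mathematical content. First, your mechanism does not cover the configuration in which the offending corner is a vertex of the \emph{original} T-mesh, with genuine mesh edges (not extensions) through it in both directions; the paper kills this case by invoking Lemma~3.2(a) of Beir\~{a}o da Veiga et al., a structural result about which mesh lines the local index vectors of an analysis-suitable T-mesh must capture. To handle it your way you would chase each of those edges to the T-junction where it terminates (it must terminate before reaching the anchor's ray, else its index would appear in the local index vector) and then claim the two resulting extensions cross --- but those T-junctions can sit far from the corner and from each other, and nothing in your sketch controls whether their extensions reach one another. Second, even in the cases you do address, ``the vertical and horizontal extensions are forced to intersect inside the rectangle'' is asserted, not argued: whether a face extension (crossing $\lfloor(p+1)/2\rfloor$ lines) plus an edge extension (crossing $\lceil(p-1)/2\rceil$ lines) reaches far enough, given where the T-junction sits inside the $(p+2)\times(q+2)$ local index window, is precisely the counting that makes the lemma true and that the cited external lemma encapsulates. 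Until you supply that quantitative reach-versus-position argument (or import the external lemma, as the paper does), the contradiction is not established. Your observation about closed extensions touching only at endpoints is a real subtlety and your localization idea is the right fix for it, but note also that in your ``interior knot line crosses $C$'' situation the phrase ``the knot-span rectangle containing $C$'' is not well defined, which is a symptom of the case decomposition not yet being airtight.
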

\begin{proof} Suppose the lemma is false, then there exists at least one cell in 
$\mathsf{C}_{ext}$ that violates the condition. We denote this cell by be $C^{*}$. Since $C^{*}$ 
violates the condition there exists at least one corner of $C^{*}$
that lies in $\{i_{k}, i_{k+1}\} \times \{j_{\ell},
j_{\ell+1}\}$ where $1 \leq k \leq p+1$ and $1 \leq \ell \leq
q+1$. Without loss of generality we may assume that
$\lfloor\frac{p}{2} \rfloor + 3 \leq k \leq p + 1$ and
$\lfloor\frac{q}{2} \rfloor + 3 \leq \ell \leq q + 1$.    
We have following three cases:
\begin{enumerate}
  \item The corner is a vertex of the original T-mesh. This
    violates Lemma 3.2(a) in~\cite{BeBuSaVa12}.
  \item The corner is the result of the intersection of two perpendicular
    T-junction extensions. This violates the assumption that the
    T-mesh is analysis-suitable.
  \item The corner is the result of the intersection of one
    T-junction extension and a T-mesh edge. Without loss  
    of generality, we assume the edge is a horizontal edge and the
    T-junction extension is vertical and is associated with a
    T-junction $T_1$. As the edge cannot intersect the vertical line
    $i_{\lceil p/2 \rceil+1}$, it must terminate in a T-junction
    $T_{2}$. Examining the extensions associated with $T_1$ and $T_2$
    we find that they must intersect. This violates the assumption
    that the T-mesh is analysis-suitable.
\end{enumerate}
Hence, such cell cannot exist.
\end{proof}

\begin{theorem}\label{thm:lli}
The basis for an analysis-suitable T-spline is locally linearly independent.
\end{theorem}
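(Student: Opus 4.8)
The plan is to localize everything to a single B\'{e}zier element and then lift the univariate local-independence mechanism of \cref{lem:db1,lem:db2} to two dimensions using the dual basis of \cref{thm:glidb}. Fix $C \in \mathsf{E}$ with horizontal extent $[s_{i_l}, s_{i_r}]$ and vertical extent $[t_{j_b}, t_{j_t}]$, and let $\mathsf{A}_C = \{A \in \mathsf{A}_{p,q} : N_A^{p,q} \not\equiv 0 \text{ on } C\}$. Suppose $f = \sum_{A \in \mathsf{A}_C} c_A N_A^{p,q}$ vanishes identically on the open cell $C$; the goal is $c_A = 0$ for all $A \in \mathsf{A}_C$. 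Since $f \in \mathcal{T}$, \cref{thm:glidb} gives $\lambda_A^{p,q}(f) = c_A$, so it suffices to prove a two-dimensional analogue of \cref{lem:db2}: if $f \in \mathcal{T}$ vanishes on $C$, then $\lambda_A^{p,q}(f) = 0$ for every $A \in \mathsf{A}_C$.

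To prove this local annihilation property I would realize the tensor functional $\lambda_A^{p,q} = \lambda_A[s_{i_1}, \ldots, s_{i_{p+2}}] \otimes \lambda_A[t_{j_1}, \ldots, t_{j_{q+2}}]$ as a single derivative functional evaluated at an interior point $(\tau_h, \tau_v) \in C$. Each univariate factor is, in its de Boor--Fix form, a finite combination of derivatives evaluated at one point of any genuine knot span lying inside its support; choosing $\tau_h \in (s_{i_l}, s_{i_r})$ and $\tau_v \in (t_{j_b}, t_{j_t})$ each interior to such a span of the corresponding local knot vector places the evaluation point in the open cell $C$. Because $f \equiv 0$ on $C$, every partial derivative of $f$ vanishes at $(\tau_h, \tau_v)$, and hence $\lambda_A^{p,q}(f) = 0$, which forces $c_A = 0$ and closes the argument once the realization is justified.

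The main obstacle is exactly that justification: showing the localized point functional reproduces the canonical dual value $\lambda_A^{p,q}(N_B^{p,q}) = \delta_{A B}$ of \cref{thm:glidb}, rather than some span-dependent quantity. For an argument drawn from the single spline space generated by $A$'s own knot vectors the de Boor--Fix value is point-independent, but $f$ superposes blending functions built from many distinct local knot vectors, and for the cross terms $B \neq A$ the naive point value is sensitive to which span is used. Reconciling them requires the one-directional nesting of local knot vectors along mesh lines that analysis-suitability forces, which is precisely what \cref{lem:bez1} supplies: it guarantees that $C$ coincides with a genuine knot span of $A$ in at least one parametric direction, and the crossing configurations that would break the nesting are the ones ruled out in cases~2 and~3 of its proof. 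I therefore expect the technical heart to be a careful verification that, reading the de Boor--Fix functional in the genuine-span direction furnished by \cref{lem:bez1}, each cross term evaluates consistently with the global pairing $\delta_{A B}$; the vanishing of $f$ on $C$ then completes the proof.
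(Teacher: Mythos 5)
Your overall skeleton matches the paper's: restrict to a B\'{e}zier element, use the dual basis of \Cref{thm:glidb} to write $c_A = \lambda_A^{p,q}(f)$, and then argue that the functional annihilates $f$ because $f$ vanishes on the element. The divergence is in how the annihilation is executed, and that is where your proposal has a genuine gap. You replace the tensor functional by a de Boor--Fix derivative functional $\mu_{(\tau_h,\tau_v)}$ evaluated at an interior point of the cell, observe that $\mu_{(\tau_h,\tau_v)}(f)=0$ trivially, and then need $\mu_{(\tau_h,\tau_v)}(f) = \lambda_A^{p,q}(f)$. That equality is automatic only when the argument is a spline over $A$'s own local knot vectors; $f$ superposes blending functions built from many different local knot vectors, so it is exactly the cross terms $\mu_{(\tau_h,\tau_v)}(N_B)$, $B \neq A$, that are unresolved---you say so yourself and then defer the verification (``I therefore expect the technical heart to be\ldots''). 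That deferred step is not a routine check: it amounts to re-proving the dual basis theorem for a particular pointwise realization adapted to the given element, which is essentially the content of the cited dual-basis literature. Note also that nothing in your first two paragraphs actually uses the conclusion of \Cref{lem:bez1}: placing $(\tau_h,\tau_v)$ interior to nondegenerate spans of $A$'s local knot vectors is automatic for any B\'{e}zier element in $\operatorname{supp}(N_A)$, since no knot line of $N_A$ crosses the open element. The lemma that carries the analysis-suitability input therefore never enters your main line of argument, which is a sign that the entire difficulty has been pushed into the deferred step.

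The paper avoids this by keeping the two ingredients separate. \Cref{thm:glidb} is used only for the identity $c_A = \lambda_A^{p,q}(f)$ with the canonical functional; no pointwise realization is needed there. Then \Cref{lem:bez1} is invoked for its actual conclusion---the element coincides with a \emph{full} knot span $[s_{i_k}, s_{i_{k+1}}]$ (or $[t_{j_\ell}, t_{j_{\ell+1}}]$) of $A$'s local knot vector in at least one direction---and that is precisely the hypothesis of the univariate annihilation result \Cref{lem:db2}, which then yields $\lambda_{A}[s_{i_1}, \ldots, s_{i_{p+2}}](f) = 0$ or $\lambda_{A}[t_{j_1}, \ldots, t_{j_{q+2}}](f) = 0$ in that single direction. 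So \Cref{lem:bez1} is not a device for reconciling cross terms of a pointwise realization, as you use it; it supplies the full-span hypothesis under which the canonical dual functional itself kills $f$. To repair your proposal, either adopt the paper's decomposition, or actually prove the cross-term identity $\mu_{(\tau_h,\tau_v)}(N_B) = \delta_{AB}$---which would require the knot-vector nesting (overlap) properties of analysis-suitable T-meshes from the dual-basis papers, not \Cref{lem:bez1} applied to $A$.
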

\begin{proof}
Let an arbitrary B\'{e}zier element, $E \in \mathsf{E}$,
from an analysis-suitable T-mesh be given. We denote the element vertices
by $\{i_l, j_b\},$ $\{i_r, j_b\},$  $\{i_r, j_t\}, $ and  $\{i_l, j_t\}$. Let
$\mathsf{A}^e$ be the set of anchors whose corresponding basis functions 
are non-zero over $\hat{\Omega}^e$.  Assume that $f(s,t) = \sum_{A \in \mathsf{A}^e}
c_A N_{A}(s,t) = 0$ for all $s,t \in \hat{\Omega}^e$. 
By Lemma \ref{lem:bez1}, since $N_{A}$ is non-zero over $E$,
there exists either a $k$, $1 \leq k \leq p+1$, such that 
$i_l = i_{k}$ and $i_r = i_{k+1}$ or an $\ell$, $1 \leq \ell \leq q+1$, 
such that $j_b = j_{\ell}$ and $j_t = j_{\ell+1}.$ By
Lemma~\ref{thm:glidb}
\[\begin{split}c_A &= \lambda_{A}^{p, q}(f(s,t))\\
 &= \lambda_{A}[s_{i_1}, \ldots, s_{i_{p+2}}] 
\otimes \lambda_{A}[t_{j_1}, \ldots, t_{j_{q+2}}]
(f(s,t)).\end{split}\]
Since $f(s,t) =0$ 
for $s_{i_k} \leq s \leq s_{i_{k+1}}$ or 
$f(s,t) = 0$ for $t_{j_{ \ell}} \leq t \leq t_{j_{ \ell+1}}$
we have by Lemma~\ref{lem:db2} that $$\lambda_{A}[s_{i_1}, \ldots, s_{i_{p+2}}] 
(f(s,t)) = 0$$ or $$ 
\lambda_{A}[t_{j_1}, \ldots, t_{j_{q+2}}](f(s,t)) = 0$$
which completes the proof.
\end{proof}

\section{Hierarchical analysis-suitable T-splines}
\label{sec:hts}
A hierarchical T-spline space is constructed from a finite sequence of $N$ nested
ASTS spaces, $\mathcal{T}^{\alpha} \subset \mathcal{T}^{\alpha+1}$, $\alpha = 1,
\ldots, N-1$, and $N$ bounded open index domains, $\doublehat{\Omega}^{N}
\subseteq \doublehat{\Omega}^{N-1} \subseteq 
\cdots \subseteq \doublehat{\Omega}^1$,
which define the nested domains for the hierarchy. Two
important theoretical results for ASTS will be used in the construction of hierarchical analysis-suitable T-splines:
\begin{theorem}
\label{thm:nest_result}
Given two analysis-suitable T-meshes with non-overlapping T-junction extensions, 
$\mathsf{T}^1$ and $\mathsf{T}^2$ , if $\mathsf{T}_{ext}^1 \subseteq
\mathsf{T}_{ext}^2$, then $\mathcal{T}^1 \subseteq \mathcal{T}^2$.
\end{theorem}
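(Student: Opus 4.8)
The plan is to prove the containment by exhibiting, for each blending function of $\mathsf{T}^1$, an explicit representation in the basis of $\mathsf{T}^2$. Since $\mathcal{T}^1 = \operatorname{span}(\mathsf{N}^1)$, it suffices to show $\mathsf{N}^1 \subseteq \mathcal{T}^2$, so I would fix an anchor $A \in \mathsf{A}^1$ and its function $N_A^{p,q}$. The hypothesis $\mathsf{T}_{ext}^1 \subseteq \mathsf{T}_{ext}^2$ means the B\'ezier mesh $\mathsf{C}_{ext}^2$ refines $\mathsf{C}_{ext}^1$, so every element $E \in \mathsf{E}^2$ sits inside a single element of $\mathsf{E}^1$; consequently $N_A^{p,q}$ restricts to one polynomial of bidegree $(p,q)$ on each $E$, and the local (tensor de Boor--Fix) functionals $\lambda_B^{p,q}$ of $\mathsf{T}^2$ from \cref{thm:glidb} may be evaluated on it. I then introduce the candidate $g := \sum_{B \in \mathsf{A}^2} \lambda_B^{p,q}(N_A^{p,q})\, N_B^{p,q} \in \mathcal{T}^2$ and set $h := N_A^{p,q} - g$, so that the entire problem becomes proving $h \equiv 0$.

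The first half is pure duality. Applying $\lambda_B^{p,q}$ to $g$ and invoking the biorthogonality $\lambda_B^{p,q}(N_{B'}^{p,q}) = \delta_{B,B'}$ of \cref{thm:glidb} gives $\lambda_B^{p,q}(g) = \lambda_B^{p,q}(N_A^{p,q})$, hence $\lambda_B^{p,q}(h) = 0$ for every anchor $B \in \mathsf{A}^2$. It remains to convert this family of vanishing functionals into the pointwise conclusion, which I would do element by element: on a fixed $E \in \mathsf{E}^2$ both $N_A^{p,q}$ and $g$ are bidegree-$(p,q)$ polynomials, so $h|_E \in \mathbb{P}_{p,q}$.

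The crux is the local claim that the blending functions of $\mathsf{T}^2$ which are nonzero on $E$ restrict to a \emph{basis} of $\mathbb{P}_{p,q}$, with the restricted functionals $\lambda_B^{p,q}$ acting as the dual basis. Linear independence of these restrictions is exactly \cref{thm:lli}; what must be supplied is the dimension count, namely that there are precisely $(p+1)(q+1)$ of them, so that they span all of $\mathbb{P}_{p,q}$. Granting this, write $h|_E = \sum_B d_B N_B^{p,q}|_E$ over the active anchors, and use \cref{lem:db1} in its tensor-product form, with the evaluation point of each local functional placed in the interior of $E$, to recover $d_B = \lambda_B^{p,q}(h) = 0$. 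Thus $h|_E = 0$; since $E$ is arbitrary, $h \equiv 0$, giving $N_A^{p,q} = g \in \mathcal{T}^2$ and therefore $\mathcal{T}^1 \subseteq \mathcal{T}^2$.

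I expect the main obstacle to be exactly this local basis statement, together with the consistency needed to apply the univariate machinery of \cref{lem:db1,lem:db2} simultaneously on every element. One must know that the active functions exhaust $\mathbb{P}_{p,q}$ on each B\'ezier element, and that the de Boor--Fix functionals return a single, element-independent value on $N_A^{p,q}$ (so that the coefficients read off by \cref{lem:db1} on different elements agree and glue into the one global $g$). Both points rest on analysis-suitability: for a general T-mesh the number of functions active on an element need not equal $\dim \mathbb{P}_{p,q}$, local linear independence fails, and the dual functionals of \cref{thm:glidb} are unavailable. An alternative and essentially equivalent route would characterize $\mathcal{T}$ as the full space of bidegree-$(p,q)$ piecewise polynomials on $\mathsf{C}_{ext}$ with the smoothness dictated by the T-mesh, and then observe that refinement to $\mathsf{T}_{ext}^2$ neither coarsens the breakup nor strengthens the smoothness constraints on existing edges; here the same count underlies the characterization, so the difficulty is unchanged.
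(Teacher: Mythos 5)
You should know at the outset that the paper contains no proof of \cref{thm:nest_result} to compare against: the theorem is imported from~\cite{LiScSe12}, and the argument there is essentially the route you relegate to your last sentence --- first prove that $\mathcal{T}$ coincides with the \emph{full} space of bidegree-$(p,q)$ piecewise polynomials on $\mathsf{C}_{ext}$ with the smoothness dictated by the T-mesh (the hard step, which rests on a dimension formula for spline spaces over T-meshes), after which nestedness is immediate because passing from $\mathsf{T}^1_{ext}$ to $\mathsf{T}^2_{ext}$ only subdivides cells and removes continuity constraints. So the question is whether your primary, dual-functional route closes on its own. It does not, because the two points you flag as ``the main obstacle'' are not side conditions to be checked but the entire content of the theorem, and you give no argument for either. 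The first --- that exactly $(p+1)(q+1)$ blending functions of $\mathsf{T}^2$ are active on each B\'{e}zier element and restrict to a basis of $\mathbb{P}_{p,q}$ there --- is genuinely fillable from the machinery the paper cites: polynomial reproduction (a consequence of the dual basis of \cref{thm:glidb} via a Marsden-type identity) shows the active functions span at least $\mathbb{P}_{p,q}$ on each element, and \cref{thm:lli} caps their number at $(p+1)(q+1)$. I would not count that against you.

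The second gap is fatal as left. The de Boor--Fix functional $\lambda_B^{p,q}$ carries a free evaluation point, and its value on a function \emph{not} already known to lie in $\mathcal{T}^2$ depends on which polynomial piece of that function the point sits in. Since whether $N_A^{p,q}$ lies in $\mathcal{T}^2$ is exactly what is being proved, your $g$ is not yet a well-defined object, and your final computation $d_B^E = \lambda_B^{p,q}(h) = 0$ unwinds to the assertion that $\lambda_B^{p,q}(N_A^{p,q})$ evaluated at a point of $E$ agrees with whatever value was used to build $g$ --- that is, to precisely the element-independence you assumed. Discharging it requires a concrete combinatorial lemma: for every anchor $B$ of $\mathsf{T}^2$ whose support meets that of $A$, each knot line of $N_A^{p,q}$ crossing the interior of the support of $\lambda_B^{p,q}$ must occur, with at least its multiplicity, among the local knots of $B$. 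This is where the hypotheses $\mathsf{T}^1_{ext} \subseteq \mathsf{T}^2_{ext}$ and analysis-suitability of both meshes actually have to be spent (one must show the fine local index vectors ``see'' every coarse knot line, which can fail for general T-meshes because a skeleton segment may be too short to be collected into a local index vector), and your remark that ``both points rest on analysis-suitability'' identifies the ingredient without supplying the argument. Until that lemma is stated and proved, the proof is circular at its center; with it, the rest of your outline does go through.
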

\begin{theorem}
\label{thm:lli2}
Analysis-suitable T-splines are locally linear independent.
\end{theorem}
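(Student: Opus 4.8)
The statement of \cref{thm:lli2} is verbatim the conclusion of \cref{thm:lli}, which has already been established in full in \cref{sec:lli}. The plan, therefore, is simply to appeal to that theorem. For completeness I sketch the self-contained argument one would reproduce, which proceeds element by element and reuses exactly the two ingredients assembled in the preliminaries, namely \cref{lem:bez1} and the tensor-product dual basis of \cref{thm:glidb}.

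First I would fix an arbitrary B\'{e}zier element $E \in \mathsf{E}$ with corner vertices $\{i_l, j_b\}$, $\{i_r, j_b\}$, $\{i_r, j_t\}$, and $\{i_l, j_t\}$, and let $\mathsf{A}^e$ collect the anchors whose blending functions are nonzero on $\hat{\Omega}^e$. Assuming a vanishing combination $\sum_{A \in \mathsf{A}^e} c_A N_A = 0$ on $\hat{\Omega}^e$, the goal is to force every $c_A = 0$. The key structural input is \cref{lem:bez1}: because each such $N_A$ is nonzero over $E$, the element's horizontal edges must coincide with two consecutive entries $s_{i_k}, s_{i_{k+1}}$ of the local horizontal knot vector of $A$, or its vertical edges must coincide with two consecutive entries $t_{j_\ell}, t_{j_{\ell+1}}$ of the local vertical knot vector.

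Next I would invoke \cref{thm:glidb} to write $c_A = \lambda_A^{p,q}(f) = \bigl(\lambda_A[s_{i_1},\ldots,s_{i_{p+2}}] \otimes \lambda_A[t_{j_1},\ldots,t_{j_{q+2}}]\bigr)(f)$. Since $f$ vanishes on all of $\hat{\Omega}^e$, it vanishes in particular on the univariate knot span $[s_{i_k}, s_{i_{k+1}}]$ (or on $[t_{j_\ell}, t_{j_{\ell+1}}]$) singled out by \cref{lem:bez1}. Applying \cref{lem:db2} to the corresponding univariate factor then yields $\lambda_A[s_{i_1},\ldots,s_{i_{p+2}}](f) = 0$ (or the vertical analogue), and because the full tensor functional factors through this vanishing component, $c_A = 0$. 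As $E$ and $A$ were arbitrary, the basis is linearly independent over every element domain.

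The only genuine obstacle in this chain is \cref{lem:bez1}, and it is precisely there that analysis-suitability is consumed: its proof rules out a misaligned element corner through a three-case dichotomy (an original-mesh vertex, the crossing of two perpendicular T-junction extensions, or the crossing of a T-junction extension with an edge), each impossible exactly because no horizontal T-junction extension meets a vertical one. Everything downstream is then a routine transfer of the univariate de Boor/Schumaker dual-functional machinery of \cref{lem:db1,lem:db2} through the tensor-product structure. Since both \cref{lem:bez1} and the dual-basis results of \cref{thm:glidb} are already in hand, no new difficulty arises and the restated theorem follows immediately; I would expect the formal proof to be nothing more than a pointer back to \cref{thm:lli}.
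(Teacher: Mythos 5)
Your proposal is correct and matches the paper exactly: the paper's own treatment of \cref{thm:lli2} is a pointer back to \cref{thm:lli}, and the argument you sketch for completeness (fix a B\'{e}zier element, apply \cref{lem:bez1} to align an element edge pair with consecutive local knot entries, then kill each coefficient via the dual functionals of \cref{thm:glidb} together with \cref{lem:db2}) is precisely the proof given there. No differences worth noting.
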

We note that to accommodate overlapping T-junction extensions requires
a minor generalization of~\cref{thm:nest_result} which is not reproduced here to maintain
clarity of exposition. For a complete description of the underlying
theory we refer the interested reader to~\cite{LiScSe12}. The local
linear independence of ASTS is proven in~\cref{sec:lli}.

\subsection{Sequences of analysis-suitable T-meshes }
We construct a sequence of $N$ analysis-suitable T-meshes such
that $\mathcal{T}^{\alpha} \subseteq
\mathcal{T}^{\alpha+1}$, $\alpha=1,\ldots,N-1$, as follows:
\begin{enumerate}
\item\label{alt:cry} Create $\mathsf{T}^{\alpha+1}$ from
  $\mathsf{T}^{\alpha}$ by subdividing each cell in $\mathsf{E}^{\alpha}$ into four congruent cells.  
\item Extend T-junctions in $\mathsf{T}^{\alpha+1}$ until it is
  analysis-suitable and $\mathsf{T}_{ext}^{\alpha} \subseteq
\mathsf{T}_{ext}^{\alpha+1}$.
\end{enumerate}
This algorithm is graphically demonstrated in
Figure~\ref{fig:refinement} for a particular T-mesh. For an efficient and general
algorithm to produce nested analysis-suitable T-spline spaces
see~\cite{ScLiSeHu10}.

\begin{figure}
\begin{center}
\begin{tabularx}{0.9\textwidth}{XX}
{\begin{tabularx}{0.9\textwidth}{X}
\begin{center}
\includegraphics[scale=0.5]{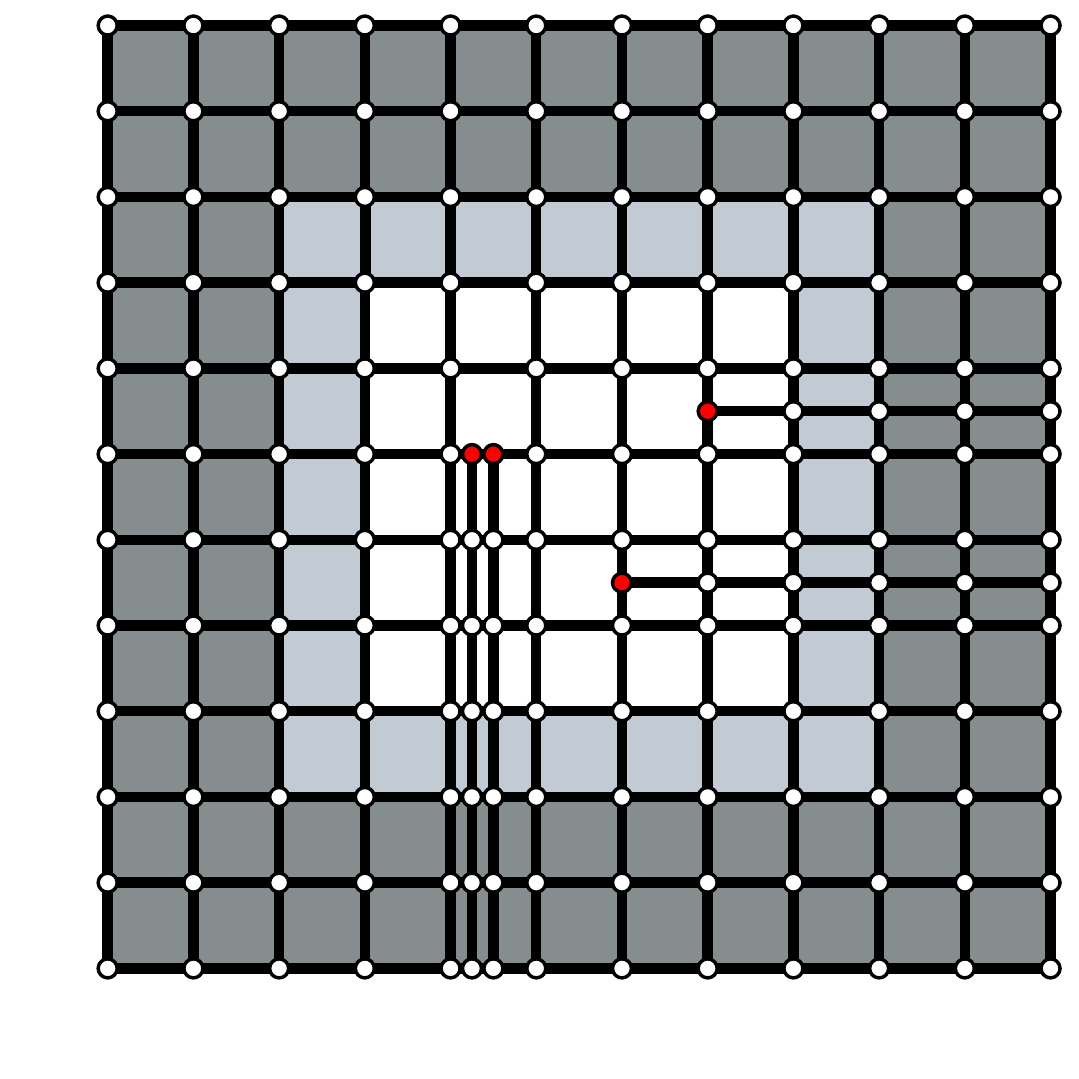}
\end{center} 
\\
\vspace{-15pt}
\begin{center}
(a) The initial T-mesh, $\mathsf{T}^{\alpha}$.
\end{center}
\end{tabularx}}
\\
\begin{center}
\includegraphics[scale=0.4]{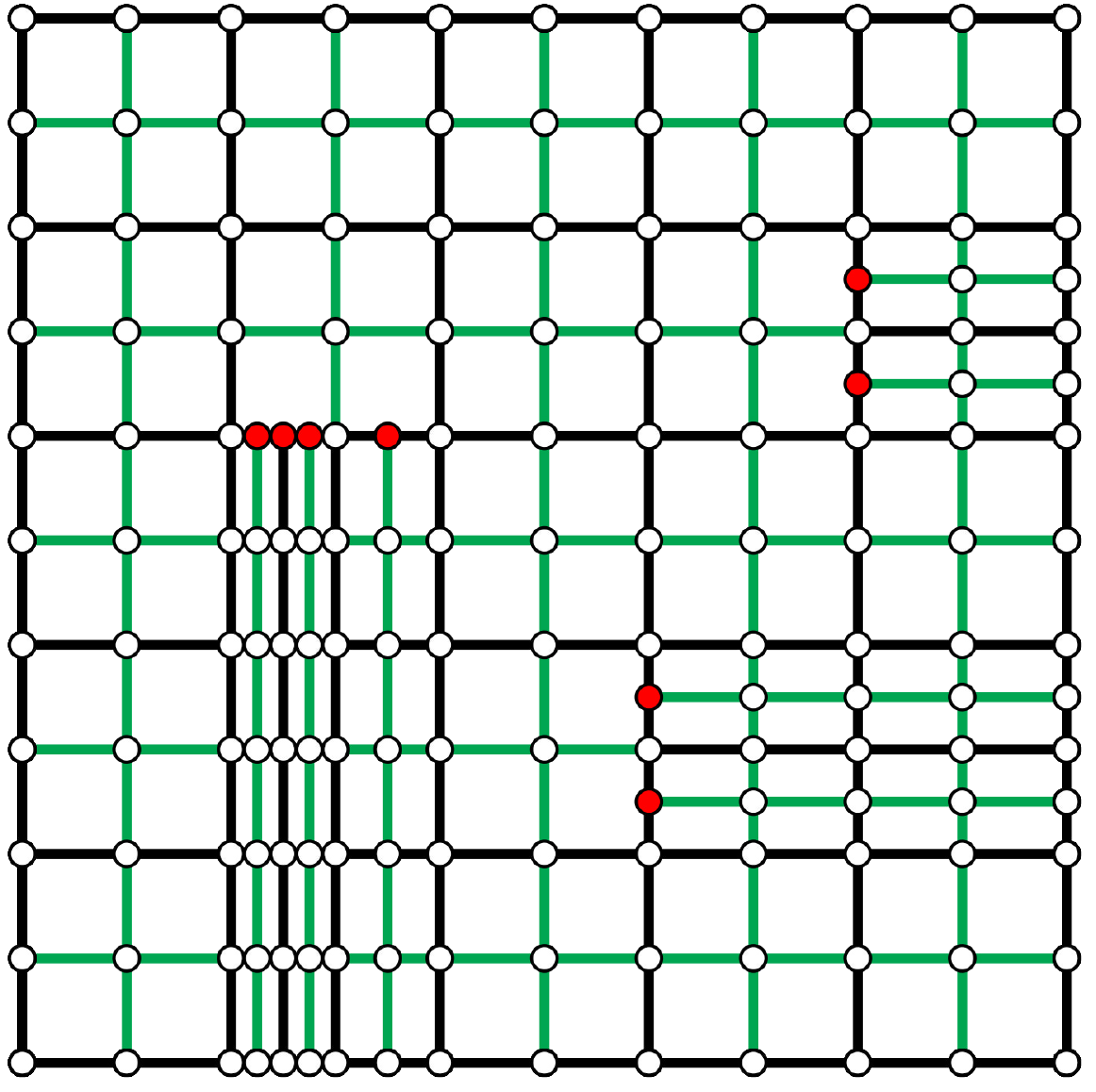}
\end{center} 
&
\begin{center}
\includegraphics[scale=0.4]{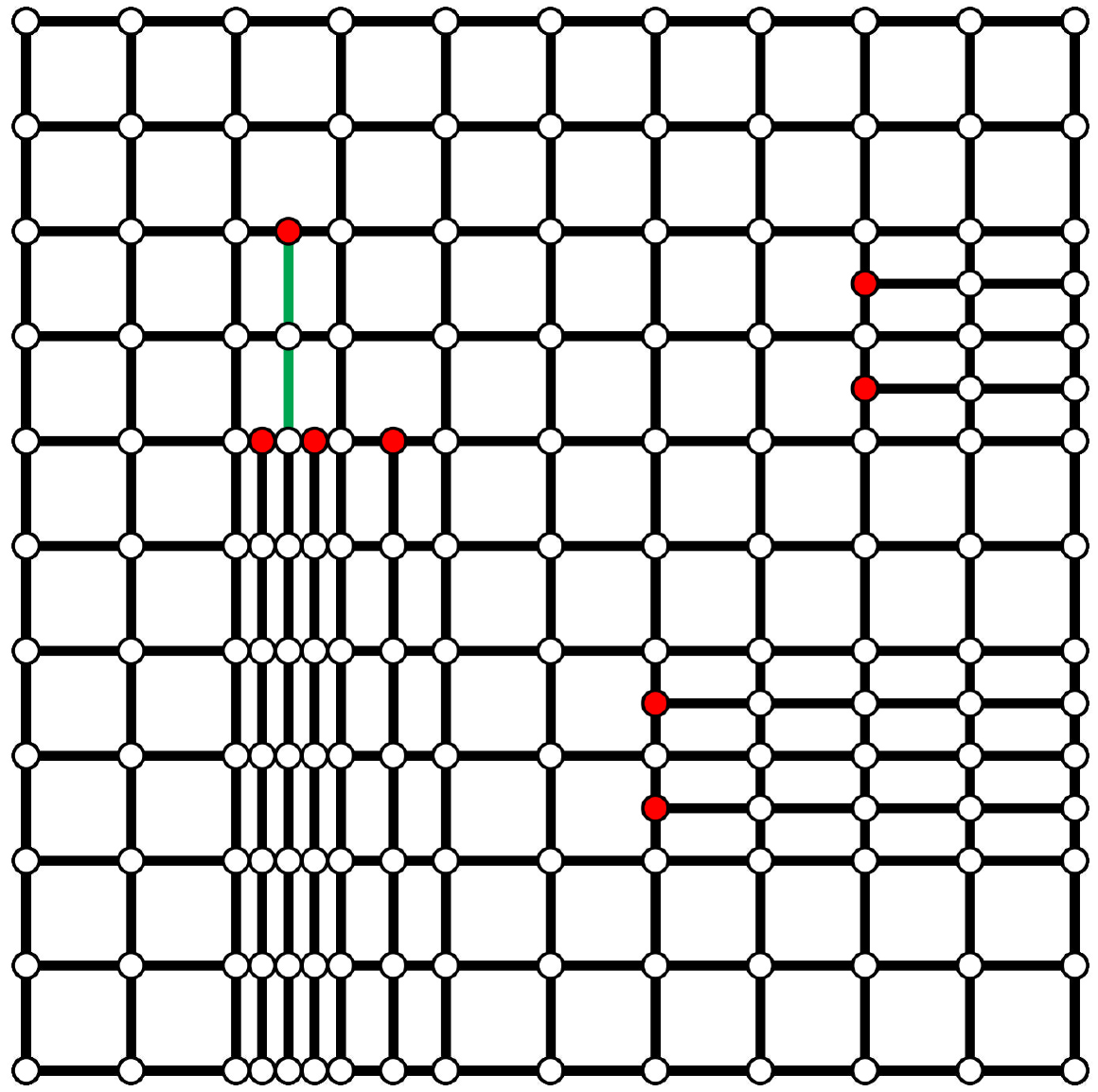}
\end{center}
\\
\vspace{-15pt}
\begin{center}
(b) Create $\mathsf{T}^{\alpha+1}$ from
$\mathsf{T}^{\alpha}$ by subdividing B\'{e}zier elements. 
\end{center}
&
\vspace{-15pt}
\begin{center}
(c) Extend T-junctions until $\mathsf{T}^{\alpha+1}$ is
analysis-suitable and
$\mathsf{T}^{\alpha} \subseteq \mathsf{T}^{\alpha+1}$. 
\end{center}
\end{tabularx}
\caption{Generating nested analsysis-suitable T-meshes. In Figures (b) and (c) the
zero parametric area region is omitted for clarity.} 
\label{fig:refinement}
\end{center}
\end{figure}

\subsection{Hierarchical T-spline spaces}
The hierarchical analysis-suitable T-spline basis can be
constructed recursively in a manner analogous to that used
for hierarchical B-splines~\cite{VuGiJuSi11}:
\begin{enumerate}
\item  Initialize $\mathsf{H}^1 =\mathsf{N}^1.$
\item Recursively construct $\mathsf{H}^{\alpha + 1}$ from $\mathsf{H}^{\alpha}$ by setting 
\[\mathsf{H}^{\alpha+1} = \mathsf{H}^{\alpha+1}_{coarse} \cup
\mathsf{H}^{\alpha+1}_{fine}, \alpha = 1, \ldots ,N-1,\] 
where
\[\mathsf{H}^{\alpha + 1}_{coarse} = \{N \in \mathsf{H}^{\alpha}:
\text{supp}(N) \nsubseteq \hat{\Omega}^{\alpha + 1}\},\] 
and
\[\mathsf{H}^{\alpha + 1}_{fine} = \{N \in \mathsf{N}^{\alpha+1}:
\text{supp}(N) \subseteq \hat{\Omega}^{\alpha + 1}\}.\]
\item Set $\mathsf{H} = \mathsf{H}^N.$
\end{enumerate}
We denote the number of functions in $\mathsf{H}$ by $n_f$.
We call the space spanned by the functions in $\mathsf{H}$ a
hierarchical analysis-suitable T-spline space and denote it by
$\mathcal{H}$. To make the ideas concrete a univariate hierarchical
spline space is shown in Figure~\ref{fig:multi_level_basis}.

\begin{figure}
  \centering
  \includegraphics[width=5in]{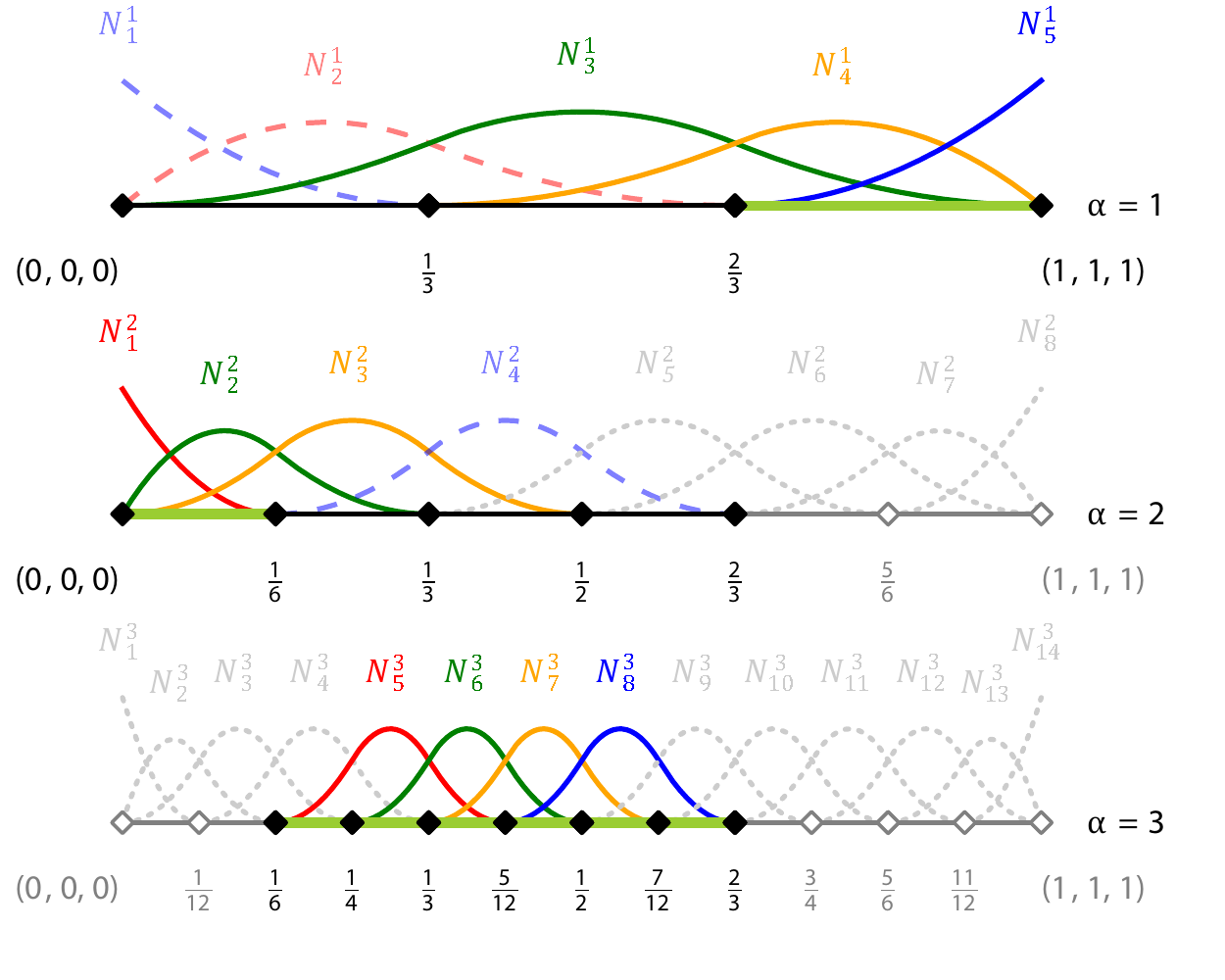}
  \caption{Basis functions for a univariate hierarchical spline space.
    Functions in $\mathsf{H}$ are indicated by solid colored
    lines.  Functions dependent on higher-level functions are
    indicated by dashed colored lines. Functions which are not fully supported in
    the hierarchy of domains are
    indicated by grey dotted lines. Hierarchical domains are black and
    B\'{e}zier elements are green. Note that functions in
    $\mathsf{H}$ are supported entirely by B\'{e}zier elements.}
  \label{fig:multi_level_basis}
\end{figure}

The linear independence of the functions in $\mathsf{H}$ follows 
immediately from the definition of hierarchical T-splines and the local linear 
independence of ASTS (see~\Cref{sec:lli}).
\begin{lemma} The functions in the hierarchical basis $\mathsf{H}$ are
  linearly independent.
\end{lemma}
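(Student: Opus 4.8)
The plan is to peel the levels of the hierarchy from coarsest to finest, reducing at each level to the single-level local linear independence already established in \Cref{sec:lli}. Suppose $\sum_{N \in \mathsf{H}} c_N N \equiv 0$; the goal is to show that every $c_N$ vanishes.

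First I would record a structural description of $\mathsf{H}$ read directly off the recursive definition. Tracking which functions survive the coarsening test $\text{supp}(N) \nsubseteq \hat{\Omega}^{\alpha+1}$ and using the nesting $\hat{\Omega}^{N} \subseteq \cdots \subseteq \hat{\Omega}^{1}$, one sees that a level-$\alpha$ function lies in $\mathsf{H}$ precisely when its support is contained in $\hat{\Omega}^{\alpha}$ but not in $\hat{\Omega}^{\alpha+1}$ (with the conventions $\hat{\Omega}^{1} = \hat{\Omega}$ and $\hat{\Omega}^{N+1} = \emptyset$). Write $\mathsf{H}_{[\alpha]}$ for this set of level-$\alpha$ functions and set $R_\alpha = \hat{\Omega}^{\alpha} \setminus \hat{\Omega}^{\alpha+1}$. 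Two consequences drive the argument: every function of level $\beta > \alpha$ in $\mathsf{H}$ has support inside $\hat{\Omega}^{\alpha+1}$ and hence vanishes on $R_\alpha$, while every $N \in \mathsf{H}_{[\alpha]}$ is positive on the interior of at least one level-$\alpha$ B\'{e}zier element contained in $R_\alpha$.

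Then I would induct on $\alpha = 1, \dots, N$. Assume the coefficients of all functions of level less than $\alpha$ have already been shown to vanish. Restricting the identity $\sum_{N \in \mathsf{H}} c_N N \equiv 0$ to $R_\alpha$, the higher-level terms vanish by support and the lower-level terms drop by the inductive hypothesis, leaving $\sum_{N \in \mathsf{H}_{[\alpha]}} c_N N \equiv 0$ on $R_\alpha$. Now fix a level-$\alpha$ B\'{e}zier element $E \subseteq R_\alpha$. Every level-$\alpha$ function in this restricted sum is a basis function of the analysis-suitable T-spline space $\mathcal{T}^\alpha$, so by the local linear independence of analysis-suitable T-splines (\Cref{thm:lli}) the restrictions to $E$ of the level-$\alpha$ functions nonzero on $E$ are linearly independent, and therefore their coefficients vanish. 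Sweeping over all such elements $E \subseteq R_\alpha$, and using that each $N \in \mathsf{H}_{[\alpha]}$ is nonzero on one of them, I conclude $c_N = 0$ for all $N \in \mathsf{H}_{[\alpha]}$, which closes the induction and proves the claim.

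I expect the main obstacle to be the geometric bookkeeping in the structural step rather than the algebra. The delicate point is translating the support conditions defining $\mathsf{H}$ into statements about whole B\'{e}zier elements: one must use that the index domains $\hat{\Omega}^{\alpha}$ are aligned with the level-$\alpha$ mesh (unions of level-$\alpha$ B\'{e}zier elements), so that $\text{supp}(N) \nsubseteq \hat{\Omega}^{\alpha+1}$ genuinely produces a full element of $R_\alpha$ on which $N$ is positive, and one must handle the open/closed conventions for the domains carefully so that the element-wise appeal to \Cref{thm:lli} is legitimate. Once this alignment is in place, the peeling argument and the reduction to single-level local linear independence are routine.
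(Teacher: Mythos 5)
Your proof is correct and is essentially the argument the paper relies on: the paper proves this lemma by citing Lemma~2 of~\cite{VuGiJuSi11}, which is exactly this level-by-level peeling induction on $\hat{\Omega}^{\alpha}\setminus\hat{\Omega}^{\alpha+1}$, with the local linear independence of ASTS (\Cref{thm:lli}) substituted for the local linear independence of B-splines as the single-level ingredient. The alignment caveat you flag (that each $\hat{\Omega}^{\alpha+1}$ must be a union of level-$\alpha$ B\'{e}zier elements so that a non-contained support yields a whole element in $R_\alpha$) is indeed the standing assumption in the hierarchical construction, so your argument goes through as written.
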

\begin{proof}
See Lemma 2 in~\cite{VuGiJuSi11}
\end{proof}

\begin{lemma}
\label{lem:span}
Given $\mathsf{H}^1, \ldots, \mathsf{H}^{N}$, a sequence of hierarchical
analysis-suitable T-spline bases,
$\operatorname{span} \mathsf{H}^{\alpha} \subseteq
\operatorname{span} \mathsf{H}^{\alpha + 1}$, $\alpha = 1, \ldots, N-1$.
\end{lemma}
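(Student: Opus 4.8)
The plan is to prove the span containment by showing that \emph{every} basis function $N \in \mathsf{H}^{\alpha}$ lies in $\operatorname{span}\mathsf{H}^{\alpha+1}$; since these functions generate $\operatorname{span}\mathsf{H}^{\alpha}$, the inclusion of spans follows at once. I would first record the elementary case split. If $\text{supp}(N) \nsubseteq \hat{\Omega}^{\alpha+1}$, then by the very definition of $\mathsf{H}^{\alpha+1}_{coarse}$ we have $N \in \mathsf{H}^{\alpha+1}_{coarse} \subseteq \mathsf{H}^{\alpha+1}$, so $N \in \operatorname{span}\mathsf{H}^{\alpha+1}$ trivially. All the work is therefore in the complementary case $\text{supp}(N) \subseteq \hat{\Omega}^{\alpha+1}$, where $N$ is \emph{not} retained as a coarse function and must instead be reconstructed from level-$(\alpha+1)$ functions.

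For that case I would first note that $\operatorname{span}\mathsf{H}^{\alpha} \subseteq \mathcal{T}^{\alpha}$: by induction on $\alpha$, every function in $\mathsf{H}^{\alpha}$ is either a carried-over coarse function (living in some $\mathcal{T}^{\beta}$ with $\beta < \alpha$) or a fine function from $\mathsf{N}^{\alpha}$, and by the nestedness of the ASTS spaces guaranteed by \cref{thm:nest_result} all of these lie in $\mathcal{T}^{\alpha} \subseteq \mathcal{T}^{\alpha+1}$. Consequently $N \in \mathcal{T}^{\alpha+1} = \operatorname{span}\mathsf{N}^{\alpha+1}$, so there is an expansion
\[
N = \sum_{M \in \mathsf{N}^{\alpha+1}} c_M\, M .
\]
The goal is then to show $c_M = 0$ for every $M$ with $\text{supp}(M) \nsubseteq \hat{\Omega}^{\alpha+1}$, for then the surviving terms are precisely the functions of $\mathsf{H}^{\alpha+1}_{fine}$ and $N \in \operatorname{span}\mathsf{H}^{\alpha+1}_{fine} \subseteq \operatorname{span}\mathsf{H}^{\alpha+1}$.

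The main obstacle is exactly the vanishing of those ``leaking'' coefficients, and this is where local linear independence is indispensable. Since $\text{supp}(N) \subseteq \hat{\Omega}^{\alpha+1}$, the function $N$ vanishes identically on every B\'{e}zier element $E$ of the level-$(\alpha+1)$ mesh lying outside $\hat{\Omega}^{\alpha+1}$. Restricting the expansion above to such an $E$ gives $0 = N|_E = \sum_{M} c_M\, M|_E$, and by the local linear independence of analysis-suitable T-splines (\cref{thm:lli2}) the functions of $\mathsf{N}^{\alpha+1}$ that are nonzero on $E$ are linearly independent there, forcing $c_M = 0$ for each such $M$. Finally, any $M$ with $\text{supp}(M) \nsubseteq \hat{\Omega}^{\alpha+1}$ is nonzero on at least one B\'{e}zier element exterior to $\hat{\Omega}^{\alpha+1}$, so its coefficient is killed by this argument. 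I expect the only delicate point to be the bookkeeping that ties ``$\text{supp}(M)$ not contained in $\hat{\Omega}^{\alpha+1}$'' to the existence of a genuine exterior B\'{e}zier element on which $M$ does not vanish; this requires a careful use of the fact that supports are unions of B\'{e}zier elements, but is otherwise routine once local linear independence is in hand.
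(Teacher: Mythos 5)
Your proof is correct and is essentially the argument the paper relies on: the paper gives no details of its own, simply citing Lemma~3 of the hierarchical B-spline reference~\cite{VuGiJuSi11}, whose proof is exactly your case split (coarse functions are retained verbatim; fully supported functions are re-expanded in $\mathsf{N}^{\alpha+1}$) together with the use of local linear independence (here \cref{thm:lli2}) to annihilate the coefficients of level-$(\alpha+1)$ functions whose supports leak outside $\hat{\Omega}^{\alpha+1}$. Your write-up correctly identifies that this local linear independence is precisely the ingredient the paper establishes in \cref{sec:lli} to make the B-spline argument carry over to ASTS, so there is no gap.
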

\begin{proof}
See Lemma 3 in~\cite{VuGiJuSi11} 
\end{proof}
By construction, $\mathcal{T}^1 \subseteq \mathcal{H}$, thus the approximation
properties of analysis-suitable T-splines are inherited by their
hierarchical counterpart. In particular, constants are exactly
represented and all patch tests are exactly
satisfied~\cite{LiScSe12,Hug00}.

\section{B\'{e}zier extraction of hierarchical analysis-suitable
  T-splines} 
\label{sec:extract}
The B\'{e}zier extraction framework~\cite{ScBoHu10,Borden:2010nx,
  ScThEv13} can be extended to HASTS in a straightforward fashion.
Using B\'{e}zier extraction, the spline hierarchy is collapsed onto a
single level finite element mesh which can then be processed by
standard finite element codes without any explicit knowledge of
HASTS algorithms or data structures.

\subsection{Bernstein basis functions}
The univariate Bernstein basis functions are written as
\begin{equation}
B_{i,p}(\xi) =\frac{1}{2^p}{p \choose
  i-1}(1-\xi)^{p-(i-1)}(1 + \xi)^{i-1}
\end{equation}
where $\xi \in [-1,1]$ and the binomial coefficient $p \choose i-1$$ =
\frac{p!}{(i-1)!(p+1-i)!}$, $1 \le i \le p + 1$. In CAGD, the
Bernstein polynomials are usually defined over the unit interval
$[0,1]$, but in finite element analysis the biunit interval is
preferred to take advantage of the usual domains for Gauss quadrature.
The univariate Bernstein basis has the following properties: 
\begin{itemize}
\item {\textit{Partition of unity}}. $$\sum_{i=1}^{p+1} B_{i,p}(\xi)=1 \quad \forall
\xi \in [-1, 1]$$
\item {\textit{Pointwise nonnegativity}}. $$B_{i,p}(\xi) \ge 0 \quad \forall \xi \in
[-1,1]$$
\item {\textit{Endpoint interpolation}}. $$B_{1,p}(-1)=B_{p+1,p}(1)=1$$
\item {\textit{Symmetry}}. $$B_{i,p}(\xi) = B_{p+1-i, p}(-\xi) \quad \forall
\xi \in [-1, 1]$$
\end{itemize}
\Cref{fig:bernstein} shows the Bernstein basis for polynomial degrees $p=1,2,3$.
\begin{figure}
  \centering
  \includegraphics[scale=1]{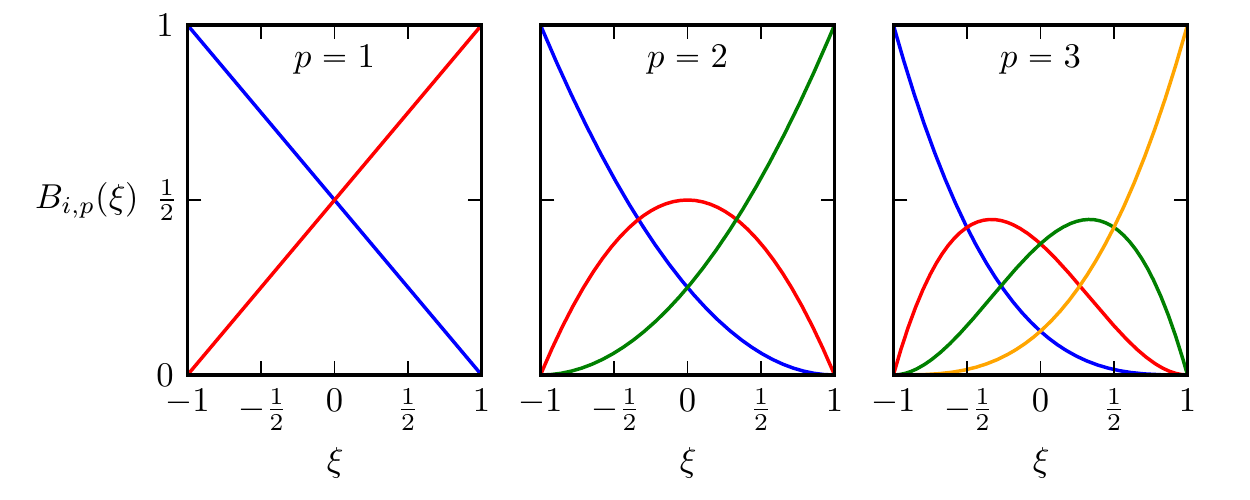}
  \caption{The Bernstein basis for polynomial degrees $p=1,2,3$.} 
  \label{fig:bernstein}
\end{figure}
We construct a bivariate Bernstein basis function of degree
$\mathbf{p}=\{p, q\}$ by
$B_{a,\mathbf{p}}:\bar{\Omega} \rightarrow \mathbb{R}^+ \cup 0$ where
$a = 1,\ldots, n_b$, $n_b =(p+1)(q+1)$, and
$\bar{\Omega}=[-1,1]^2$, as the 
tensor product of univariate 
basis functions
\begin{equation}
B_{a(i,j),\mathbf{p}} (\xi,\eta) =  B_{i,p}(\xi) B_{j,q} (\eta )
\end{equation}
with
\begin{equation}
\label{eq:bern_map_2d}
a\left(i,j\right) = (p+1)(j-1) + i.
\end{equation}

\subsection{The geometry of a hierarchical representation}
\label{sec:hbs_geom}
In a single level T-spline, basis functions and control points have a
one-to-one relationship and each control point influences
the geometry in a similar manner. In a hierarchical context it is
common to only associate control points with the functions in
$\mathsf{N}^1$. This is the convention adopted in this
paper. Note that by construction every blending function in
$\mathsf{N}^1$ can be written in terms of basis functions in
$\mathsf{H}$ (see~\Cref{lem:span}). We call the functions in $\mathsf{N}^1$
\textit{geometric} blending functions. We use $n_g$ to denote the
number of geometric blending functions.

Given vector valued control points, $\mathbf{P}_{G} \in
\mathbb{R}^{n}$, $n=2$ or $3$, and weights $w_G$, the geometry of a hierarchical
representation $\mathbf{x}:\hat{\Omega} \rightarrow \Omega$ can be written as 
\begin{align}
\mathbf{x}(s,t) &= \frac{\sum_{G=1}^{n_g} \mathbf{P}_G w_G N_{G}(s,t)}{\sum_{G=1}^{n_g} w_{G}
  N_{G}(s,t)} \\
&= \frac{\sum_{G=1}^{n_g} \mathbf{P}_G w_G N_{G}(s,t)}{w(s,t)}
\end{align}
where $(s,t) \in \hat{\Omega}$, $G$ is used to index the geometric
blending functions, and $w(s,t)$ is the weight function. The
decoupling of geometry from the basis functions in $\mathsf{H}$ is an
additional complexity unique to hierarchical representations which is
elegantly addressed via B\'{e}zier extraction.

\subsection{B\'{e}zier Elements}
\label{sec:bez}
The set of B\'{e}zier elements underlying a hierarchical T-spline are
determined recursively in a manner similar to the basis. We denote the
set of B\'{e}zier elements in a hierarchy by $\mathsf{HE}$. We construct
$\mathsf{HE}$ as follows:
\begin{enumerate}
\item  Initialize $\mathsf{HE}^1 =\mathsf{E}^1.$
\item Recursively construct $\mathsf{HE}^{\alpha + 1}$ from $\mathsf{HE}^{\alpha}$ by setting 
\[\mathsf{HE}^{\alpha+1} = \mathsf{HE}^{\alpha+1}_{coarse} \cup
\mathsf{HE}^{\alpha+1}_{fine}, \alpha = 1, \ldots ,N-1,\] 
where
\[\mathsf{HE}^{\alpha + 1}_{coarse} = \{e \in \mathsf{HE}^{\alpha}:
\hat{\Omega}^e \nsubseteq \hat{\Omega}^{\alpha + 1}\},\] 
and
\[\mathsf{HE}^{\alpha + 1}_{fine} = \{e \in \mathsf{E}^{\alpha+1}:
\hat{\Omega}^e \subseteq \hat{\Omega}^{\alpha + 1}\}.\]
\item Set $\mathsf{HE} = \mathsf{HE}^N.$
\end{enumerate}
We denote the number of B\'{e}zier elements in $\mathsf{HE}$ by $n_e$.

\subsection{Element localization}
\label{sec:local}
Using standard techniques~\cite{ScBoHu10,Borden:2010nx} it is possible
to determine the set of functions in $\mathsf{H}$ which are nonzero
over any element in $\mathsf{HE}$. This gives rise to a standard element
connectivity map which, given an element index $e$ and local function
index $a$, returns a global function index $A$. In other words $A =
\operatorname{IEN}(e, a)$. The reader is referred to~\cite{Hug00} for
additional details on common approaches to finite element localization
and the $\operatorname{IEN}$ array. Note that 
$A$ can indicate an anchor or a global 
function index.

We write a rational hierarchical T-spline basis function, restricted
to element $e$, as
\begin{align}
R_{a}^e(s,t) &= \frac{N_{a}^{e}(s,t)}{w^e(s,t)}
\end{align}
where $(s,t) \in \hat{\Omega}^e$ and $w^e(s,t)$ is the
element weight function restricted to element $e$. The element geometric map $\mathbf{x}^e :
\hat{\Omega}^e \rightarrow \Omega^e$ is the restriction of
$\mathbf{x}(s,t)$ to element $e$.
 
\subsection{B\'{e}zier extraction}
To present the basic ideas, B\'{e}zier extraction for a B-spline curve
is shown graphically in Figure~\ref{fig:bez-ext}. B\'{e}zier
extraction constructs a linear transformation defined by a matrix referred to
as the extraction operator. The extraction operator maps a Bernstein 
polynomial basis defined on B\'{e}zier elements to the global spline
basis. The transpose of the extraction operator maps the control
points of the spline to the B\'{e}zier control points.

\label{sec:extraction}
\begin{figure}[htb]
  \centering
  \includegraphics[width=4.5in]{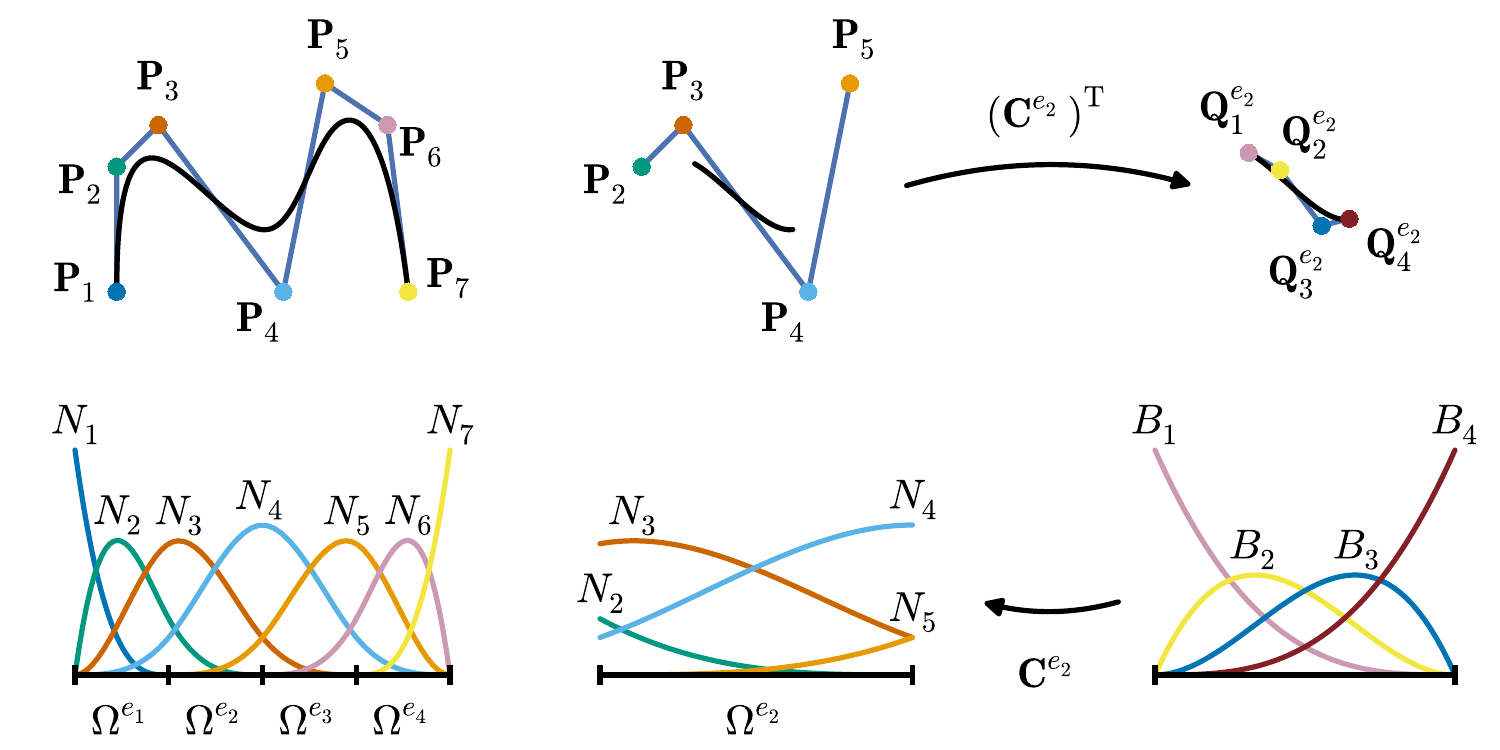}
  \caption{\label{fig:bez-ext}Illustration of the B\'{e}zier
    extraction operator $\mathbf{C}^e$ for a spline of degree 3.}
\end{figure}

Each hierarchical basis function supported
by element $e$ can be written in B\'{e}zier form as
\begin{align}
N_a^e(s({\xi}), t(\eta)) &= \sum_{b=1}^{n_b} c^a_b
B_b(\xi,\eta) \\
&= \bar{N}_a^e(\xi,\eta)
\end{align}
where the dependence of the Bernstein polynomial
$B_b(\xi,\eta)$ on the polynomial
degrees $p$ and $q$ has been suppressed for clarity. The overbar
will be used to denote a quantity written in terms of the
Bernstein basis defined over the element domain
$\bar{\Omega}$. The
B\'{e}zier coefficients $c^a_b$ are computed using standard knot
insertion techniques~\cite{ScBoHu10}. We denote the vector of hierarchical basis functions
supported by element $e$ by $\mathbf{H}^e(\xi,\eta)$ and the vector of
Bernstein basis functions by $\mathbf{B}(\xi,\eta)$. We then have
that
\begin{align}
  \mathbf{N}^e(s(\xi),t(\eta)) &= \mathbf{C}^e\mathbf{B}(\xi,\eta) \\
&= \bar{\mathbf{N}}^e(\xi,\eta)
\end{align}
where $\mathbf{C}^e$ is the \textit{element extraction
  operator} (see ~\cite{ScThEv13}). In other words, the element extraction operator is composed of the
B\'{e}zier coefficients $c_b^a$.

We write the element weight function as
\[\begin{aligned}
w^e(s(\xi), t(\eta)) &= \sum_{g=1}^{n_g^e} w_{g}^{e}
  N_{g}^{e}(s(\xi),t(\eta)) \\
  &= \sum_{g=1}^{n_g^e} w_{g}^{e}
  \sum_{b=1}^{n_b} c^g_b B_b(\xi,\eta) \\
&= \sum_{b=1}^{n_b} \left(\sum_{g=1}^{n_g^e} w_{g}^{e}c^g_b\right)
B_b(\xi,\eta) \\
&= \sum_{b=1}^{n_b} w_b^e
B_b(\xi,\eta) \\
&= \bar{w}^e(\xi, \eta).
\end{aligned}\]
where $n_g^e$ is the number of geometric basis functions which are
non-zero over element $e$. We may also write the rational hierarchical
basis functions as
\[\begin{aligned}
R_{a}^e(s(\xi), t(\eta)) &=
\frac{N_{a}^{e}(s(\xi), t(\eta))}{w^e(s(\xi), t(\eta))}
\\
&= \frac{\bar{N}_a^e(\xi, \eta)}{\bar{w}^e(\xi, \eta)}
\\
&= \bar{R}_a^e(\xi, \eta).
\end{aligned}\]
Finally, the element geometric map can be written as
\[\begin{aligned}
\mathbf{x}^e(s(\xi), t(\eta)) &= \frac{\sum_{g=1}^{n^e_g} \mathbf{P}_g^e w_g^e
N_{g}^{e}(s(\xi), t(\eta))}{w^e(s(\xi), t(\eta))}
\\ &= \frac{\sum_{g=1}^{n^e_g} \mathbf{P}_g^e w_g^e
\sum_{b=1}^{n_b} c^g_b
B_b(\xi, \eta)}{\bar{w}^e(\xi, \eta)} \\
&= \frac{\sum_{b=1}^{n_b} \left(\sum_{g=1}^{n^e_g} \mathbf{P}_g^e w_g^e
c^g_b\right) B_b(\xi, \eta)}{\bar{w}^e(\xi, \eta)} \\
&= \frac{\sum_{b=1}^{n_b} \mathbf{Q}_b^e w_b^e
B_b(\xi, \eta)}{\bar{w}^e(\xi, \eta)} \\
&= \bar{\mathbf{x}}^e(\xi, \eta).
\end{aligned}\]
The implementation of a finite element framework based on B\'{e}zier
extraction is described in detail in~\cite{ScBoHu10,Borden:2010nx}.

\section{Computational Results}\label{sec:iga}
We illustrate the use of hierarchical T-splines in the context of
isogeometric analysis. We consider problems that highlight the unique
attributes of both hierarchical refinement and T-splines.  The examples used are inspired by
those found in~\cite{ScLiSeHu10,VuGiJuSi11, ScThEv13}.

\subsection{A comparison between ASTS and HASTS local refinement}
\label{sec:apps}
We compare local refinement of ASTS to local
refinement of HASTS. When working with ASTS all refinement is performed on a single
level whereas when working with HASTS this constraint is relaxed. For additional
algorithmic details on local refinement of ASTS see~\cite{ScLiSeHu10}. We locally refine the T-spline
ship hull design shown in~\cref{fig:hull_geom} using both methods.
The geometry is constructed using the Autodesk T-spline plugin for
Rhino3d~\cite{tspline_rhino}. T-splines are popular in 
ship hull design because an entire hull can be modeled by a single
watertight surface with a minimal number of control
points~\cite{SeSe10}. T-junctions can be used to efficiently model
local features. Note that the initial T-spline of the hull contains
just $75$ control points and $36$ B\'{e}zier elements.

\begin{figure}[htb]
\begin{center}
\begin{tabularx}{0.9\textwidth}{X}
\begin{center}
\includegraphics[scale=0.65]{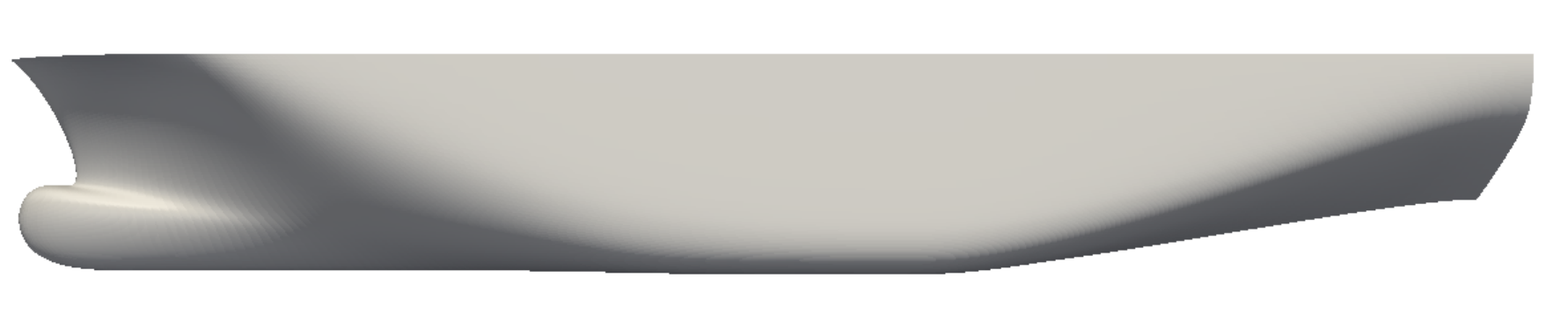}
\end{center}
\\
\begin{center}
\includegraphics[scale=0.3]{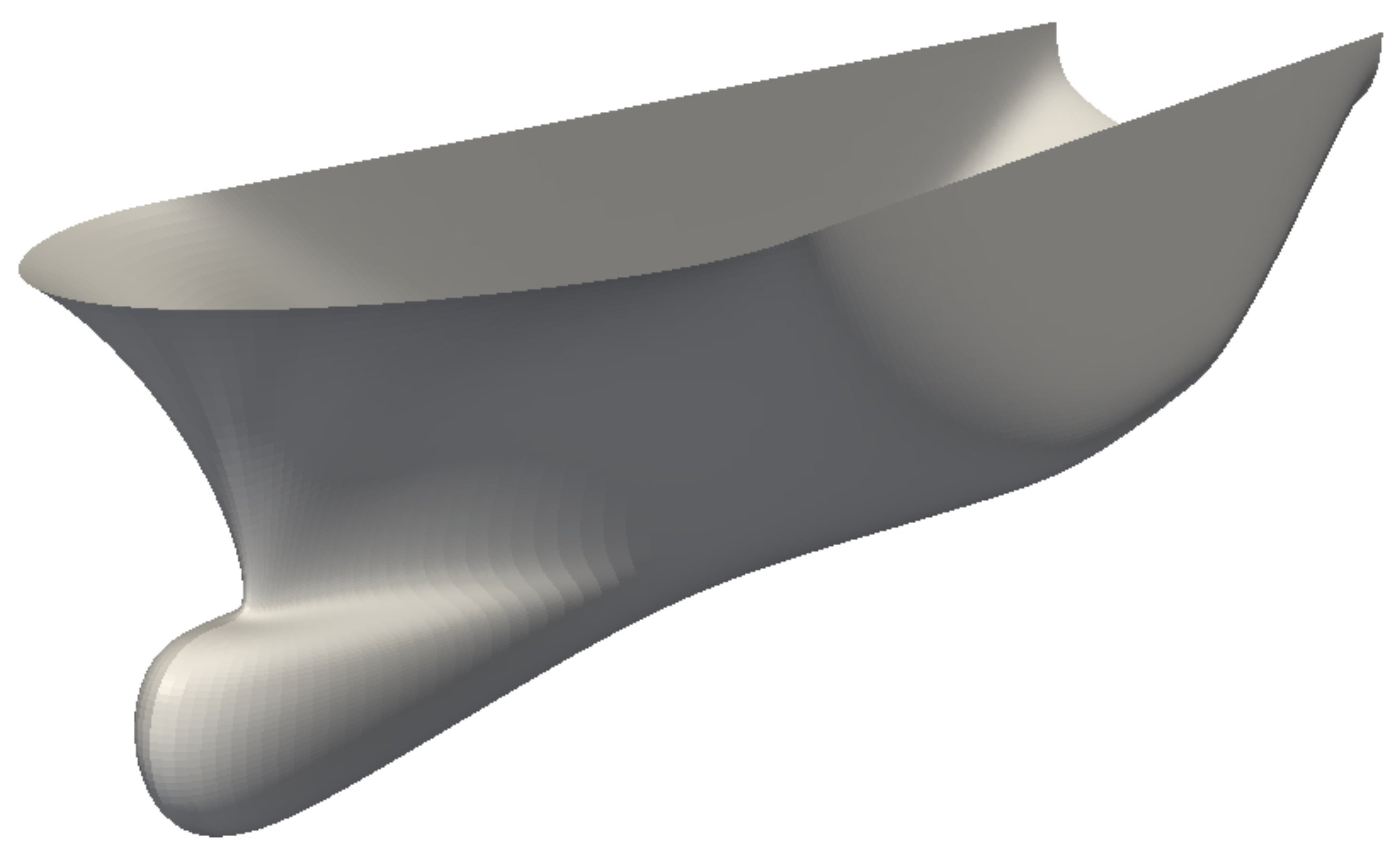}
\end{center}
\end{tabularx}
\caption{A T-spline container ship hull. The surface is
  $C^2$-continuous everywhere.} 
\label{fig:hull_geom}
\end{center}
\end{figure}

We restrict the refinement region to the locations detailed
in~\cref{fig:hull_effects}. It is assumed that the original design is
too coarse to be used as a basis for analysis and additional
resolution is required in the rectangular region followed by highly
localized refinements along the region corresponding to the curve. 

\begin{figure}[htb]
\begin{center}
\includegraphics[scale=0.65]{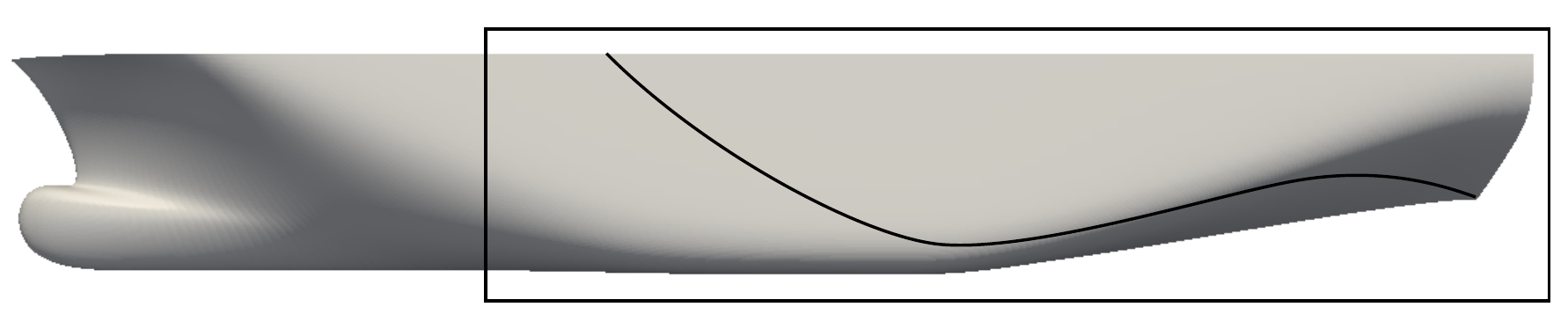}
\caption{The regions of the container ship hull where
  local refinement will be performed. Refinement is first performed in
  the rectangular region followed by highly
  localized refinement along the curve.} 
\label{fig:hull_effects}
\end{center}
\end{figure}

The HASTS refinement algorithm is based on the algorithm presented
in~\cite{ScThEv13} for spline forests. The algorithm is element-based,
meaning refinement is driven by the subdivision of B\'{e}zier
elements. The hierarchical basis is then reextracted into the new
hierarchical T-mesh topology to generate the
new set of B\'{e}zier elements. A detailed description of the underlying
algorithms, in the context of HASTS, will be postponed to a future
publication. Figure~\ref{fig:hrefine2} shows three HASTS local
refinements along the curve shown in Figure~\ref{fig:hull_effects}. 
The elements are colored according 
to their level, $\alpha$, in the hierarchy. Note that \textit{no} nonlocal
propagation of local refinement occurs for HASTS. Only those
elements specified for refinement are subdivided. This is possible due
to the relaxation of the single level constraint inherent in ASTS. The
refinements form a nested sequence of $C^2$-continuous 
hierarchical analysis-suitable T-spline spaces. The geometry of the
hull is exactly preserved during refinement. The final HASTS is composed of $1857$ B\'{e}zier
elements and $1193$ basis functions. However, only $75$ geometric
blending functions and control points are used to define the hull
geometry.

\begin{figure}[htb]
\begin{center}
\begin{tabularx}{1\textwidth}{X}
\begin{center}
\includegraphics[scale=0.4]{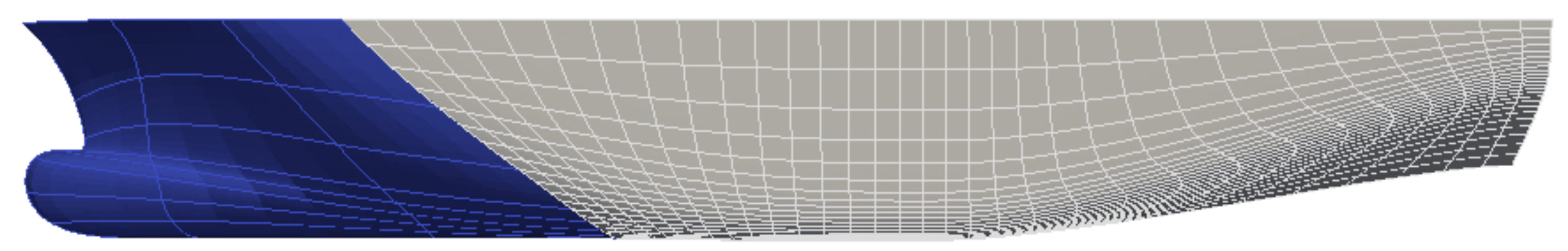}
\end{center}
\\
\begin{center}
\includegraphics[scale=0.4]{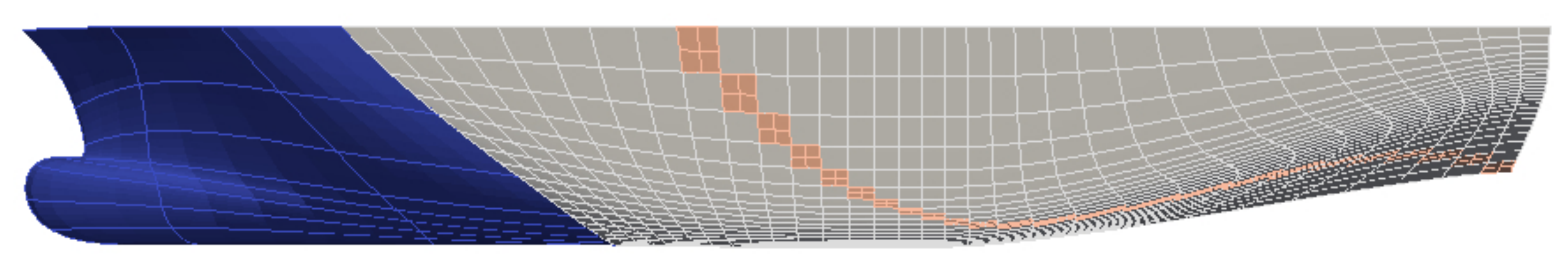}
\end{center}
\\
\begin{center}
\includegraphics[scale=0.4]{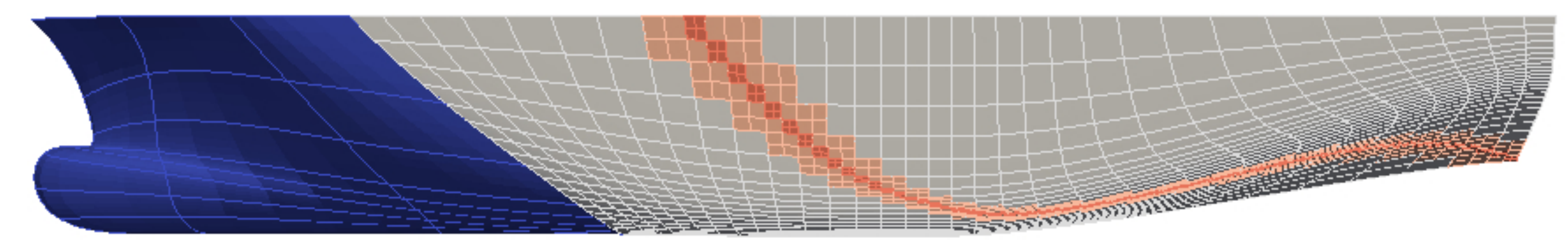}
\end{center}
\\
\begin{center}
\includegraphics[scale=0.4]{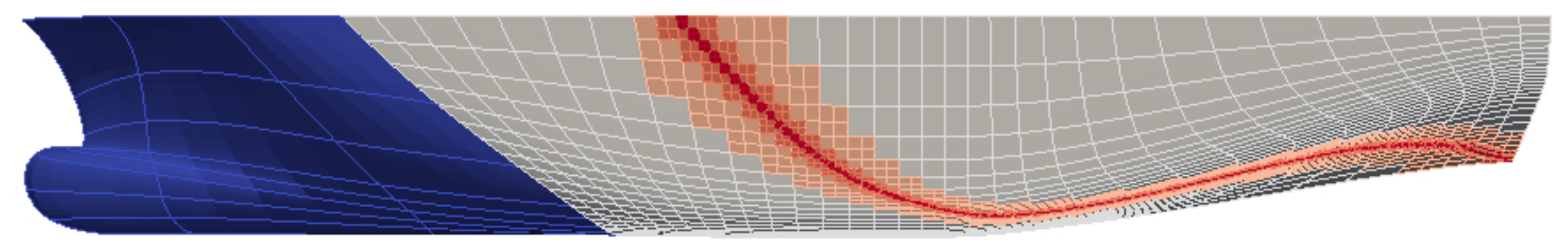}
\end{center}
\end{tabularx}
\caption{Three iterations of HASTS local refinement along the curve
  for the container ship hull in Figure~\ref{fig:hull_geom}.  The
  elements are colored according 
  to their level, $\alpha$, in the hierarchy. The
  refinements form a nested sequence of $C^2$-continuous 
  hierarchical analysis-suitable T-spline spaces. The geometry of the
  hull is exactly preserved during refinement.}
\label{fig:hrefine2}
\end{center}
\end{figure}

As a comparison, Figure~\ref{fig:refine6} shows the results of ASTS local
refinement using the algorithm from~\cite{ScLiSeHu10}. The top figure
shows the control points added during local
refinement (black dots) along the curve. The region selected
for refinement is shown in red. Observe the 
propagation of the control points away from the selected refinement
region. The bottom figure shows the resulting B\'{e}zier elements
after refinement. Superfluous control points and elements are added just to satisfy the
single level constraint inherent in the definition of ASTS.

\begin{figure}[htb]
\begin{center}
\begin{tabularx}{0.9\textwidth}{X}
\begin{center}
\includegraphics[scale=0.3]{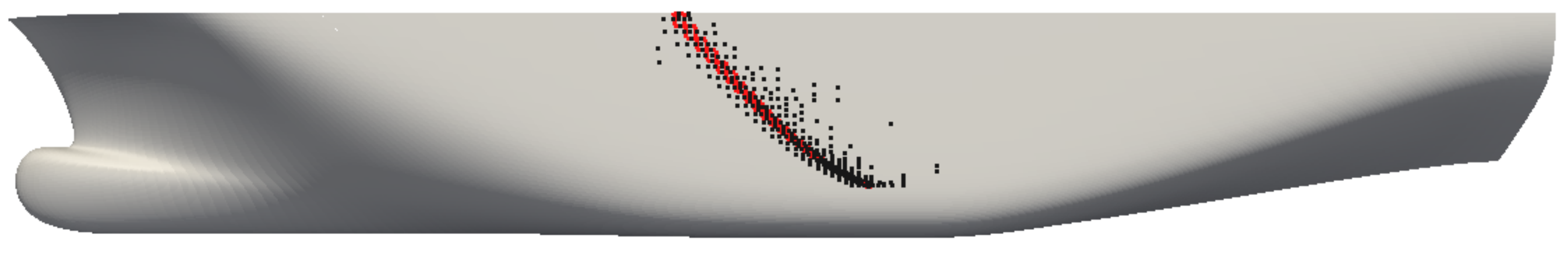}
\end{center}
\\
\begin{center}
\includegraphics[scale=0.3]{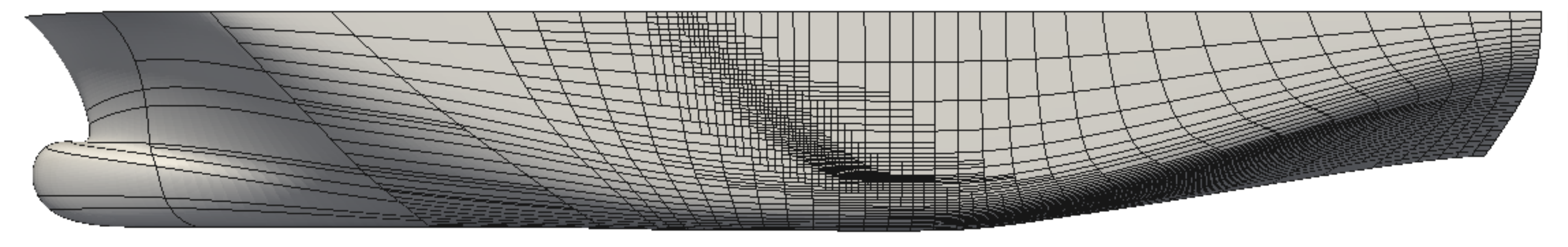}
\end{center}
\end{tabularx}
\caption{The results of ASTS local
refinement using the algorithm from~\cite{ScLiSeHu10}. The top figure
shows the control points added during local
refinement (black dots) along the curve. Observe the 
propagation of the control points away from the selected refinement
region. The bottom figure shows the resulting B\'{e}zier elements
after refinement. Superfluous control points and elements are added just to satisfy the
single level constraint inherent in the definition of ASTS.}
\label{fig:refine6}
\end{center}
\end{figure}
\clearpage
\subsection{HASTS as an adaptive basis}
\label{sec:skew}
We now consider HASTS as an adaptive basis for isogeometric analysis. We
choose as a benchmark the advection skew to the mesh problem
shown in~\Cref{fig:skew}. This problem is advection
dominated, with diffusivity of $10^{-6}$. Along the external boundary,
the boundary conditions are selected such that sharp interior and
boundary layers are present in the solution. In this case, $\theta =
45$ degrees.

\begin{figure}
  \centering
  \includegraphics[width=3in]{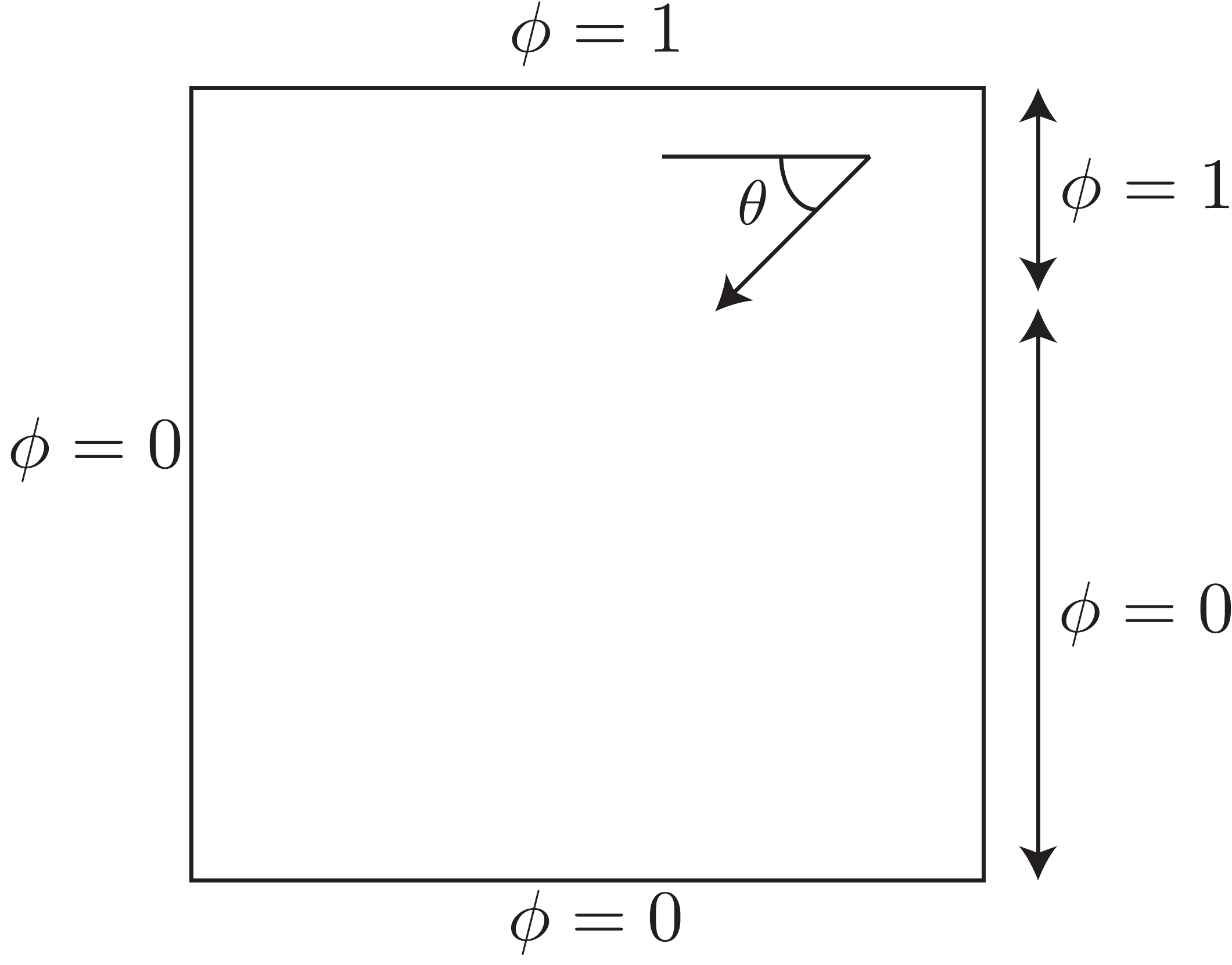}
  \caption{The advection skew to the mesh problem statement.} 
  \label{fig:skew}
\end{figure}
 
\subsubsection{Problem Statement}
Let $\Omega$ be a bounded region in $\mathbb{R}^{2}$ and assume
$\Omega$ has a piecewise smooth boundary $\Gamma$. Let
$\mathbf{x}=\{x_i\}_{i=1}^{2}$ denote a general point in
$\overline{\Omega}$, and let the temperature at a point $\mathbf{x} \in \Omega$ 
be denoted by $\phi(\mathbf{x}) \in \mathbb{R}$. Given
Dirichlet boundary data, $g : \Gamma \rightarrow \mathbb{R}$, 
the steady-state advection-diffusion boundary value
problem consists of finding the temperature $\phi$ such that
\begin{equation}
  \begin{aligned}
    \mathbf{u} \cdot \nabla \phi - \nabla \cdot
          \left(\boldsymbol{\kappa} \nabla \phi\right) &= 0 \text{  on  $\Omega$} \\
    \phi &= g \text{  on  $\Gamma $} \\
    \label{eq:sstatement}
  \end{aligned}
\end{equation}
where $\mathbf{u}: \Omega \rightarrow \mathbb{R}^2$ and $\boldsymbol{\kappa} :
\Omega \rightarrow \mathbb{R}^{2 \times 2}$ are the spatially
varying solenoidal velocity vector and symmetric, positive-definite, diffusivity
tensor, respectively. Note that in this paper we define
$\boldsymbol{\kappa} = \kappa \delta_{ij}$ where $\kappa$ is a
positive constant called the diffusivity coefficient.
We employ SUPG~\cite{BroHug82} with a standard definition for the
element stabilization parameter, $\tau^e$.

\subsubsection{A residual based error estimator} 
\label{sec:error}
To estimate the error we employ a simple residual-based explicit
estimator based on the variational multiscale theory for
fluids~\cite{Hug95,LaMa05, HaDoMi06,HaDoFu12}. It is given by
\begin{equation}
||\phi'||_{\Omega^e} \approx \tau^e || \mathbf{u} \cdot \nabla \phi - \nabla \cdot
\boldsymbol{\kappa} \nabla \phi ||_{\Omega^e}.
\end{equation}
Note that this error estimator underestimates the error for
diffusion-dominated flows but is adequate for the advection-dominated
benchmark presented in this paper. Using standard
techniques~\cite{Hug00} we use the element scaling
\begin{equation}
r = \frac{h^e_{\alpha+1}}{h^e_{\alpha}} =
\left(\frac{tol}{||\phi'||_{\Omega^e}}\right)^{\sfrac{1}{\beta}} 
\end{equation}
where $h^e_{\alpha}, h^e_{\alpha+1}$ are the mesh size distributions for
$\mathsf{T}^{\alpha}$ and $\mathsf{T}^{\alpha+1}$, respectively, and $\beta$ is
the order of convergence of the method. The element size, $h^e$, is
the square root of the element area.  We flag elements
for refinement if $r < 1$. The adaptive process is repeated until a 
specified convergence
tolerance is attained or a maximum number of hierarchical levels are
introduced.

\subsection{Results}
We solve the problem with $C^1$ biquadratic and $C^2$ bicubic 
hierarchical T-splines. The initial T-mesh for both cases
is shown in~\Cref{fig:skew_static_mesh}. Note that the initial
T-mesh is locally refined to accommodate the presence of sharp boundary
layers in the solution. Note that this refinement is \textit{not}
hierarchical. We have found that judiciously performing local refinement of the first
level of a T-spline
hierarchy to accommodate geometric features or boundary conditions
leads to smaller hierarchies and more efficient solution 
procedures.

During each adaptive step the error is assessed as described in~\cref{sec:error}, elements are flagged
for refinement and subdivided, and a new hierarchical basis is then extracted into the new
hierarchical T-mesh topology. This generates a refined set of B\'{e}zier elements. The sequence
of B\'{e}zier mesh refinements is shown for both the biquadratic and 
bicubic case in~\Crefrange{fig:skew_static_mesh_bq}{fig:skew_static_mesh_bc}. 
The sequence of biquadratic refinements form a nested sequence of $C^1$-continuous 
HASTS spaces, whereas the sequence of bicubic refinements form a
nested sequence of $C^2$-continuous HASTS spaces. Note
that fewer elements are required for convergence as the smoothness and order of the
basis increases~\cite{ScThEv13}.

To illustrate the structure and distribution of the hierarchical basis the Greville
abscissae~\cite{ScSiEvLiBoHuSe12} are plotted
in~\Crefrange{fig:skew_static_mesh_funconly_bq}{fig:skew_static_mesh_funconly_bc}. Note
that a linear parameterization was employed for all meshes and the
level zero control points and blending functions define the
geometry. The dots are scaled according to their level in the
hierarchy; a larger dot denotes a lower level. The sequence of solutions are shown
in~\Crefrange{fig:skew_static_sol_bq}{fig:skew_static_sol_bc}.

\begin{figure}
\begin{center}
\includegraphics[scale=0.37]{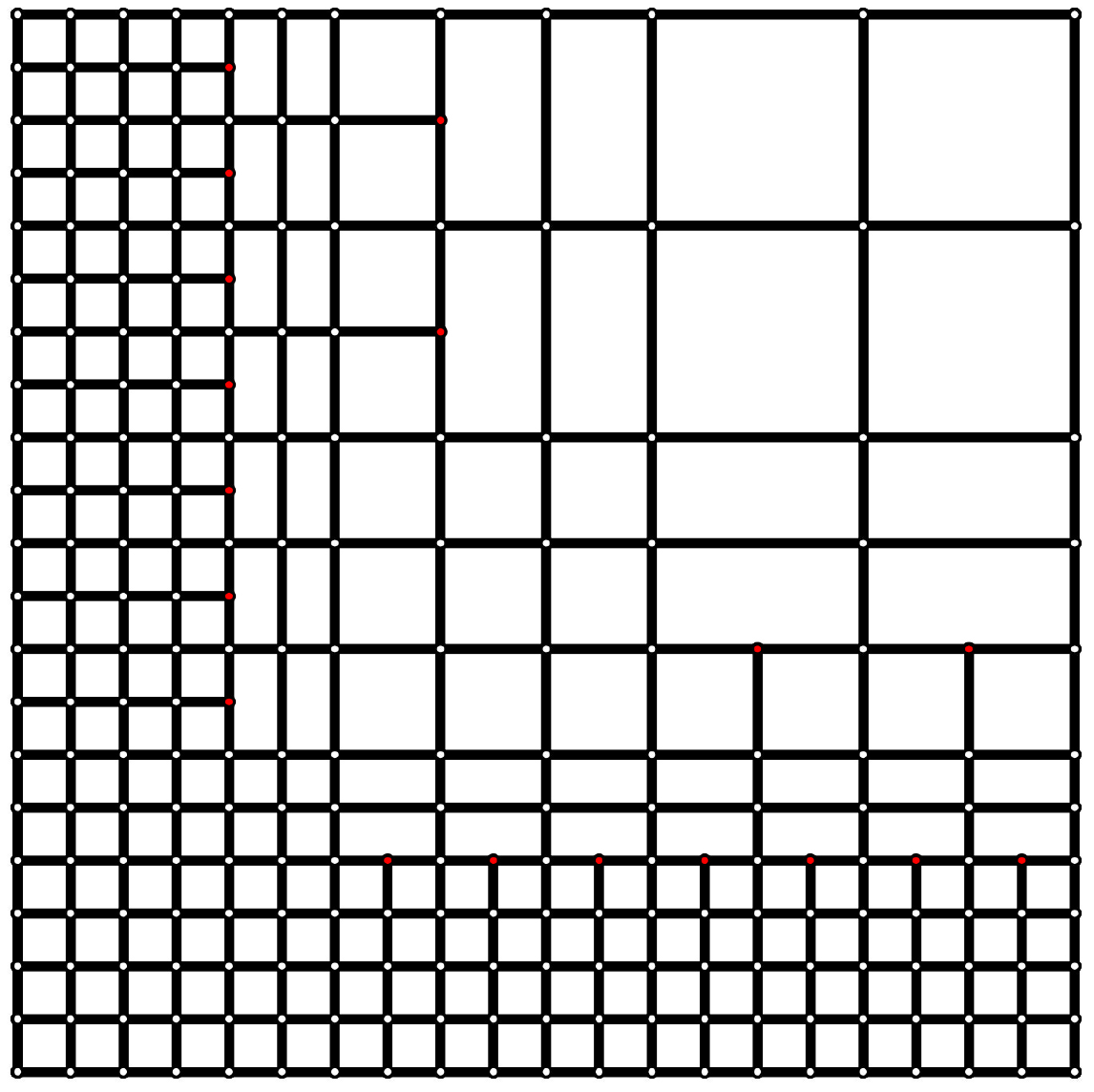}
\end{center}
\caption{Initial T-mesh for the static advection skew to
  the mesh problem.} 
\label{fig:skew_static_mesh}
\end{figure}

\begin{figure}
\begin{center}
{\begin{tabularx}{0.9\textwidth}{XX}
\begin{center}
\includegraphics[scale=0.17]{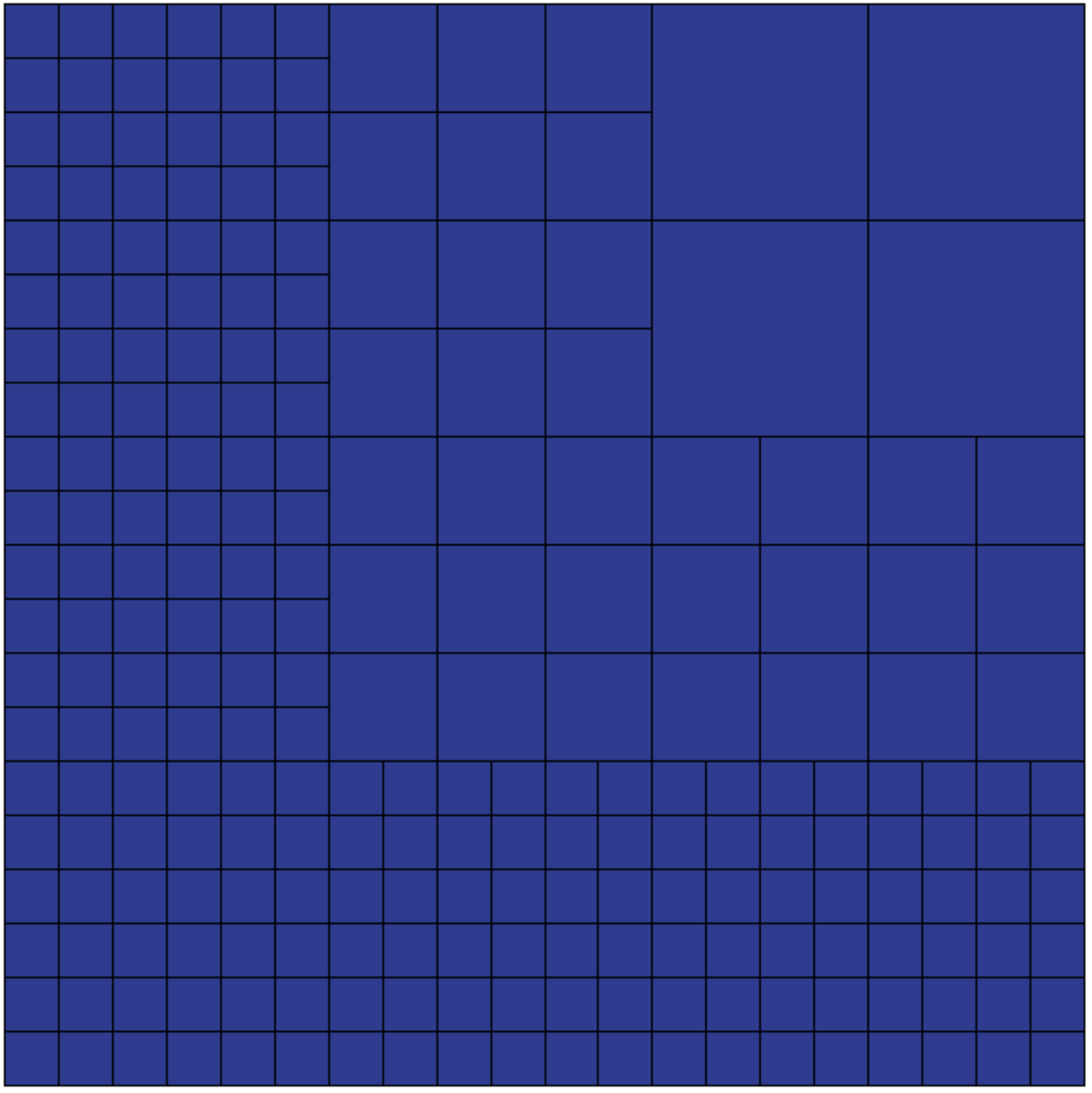}
\end{center} 
&
\begin{center}
\includegraphics[scale=0.17]{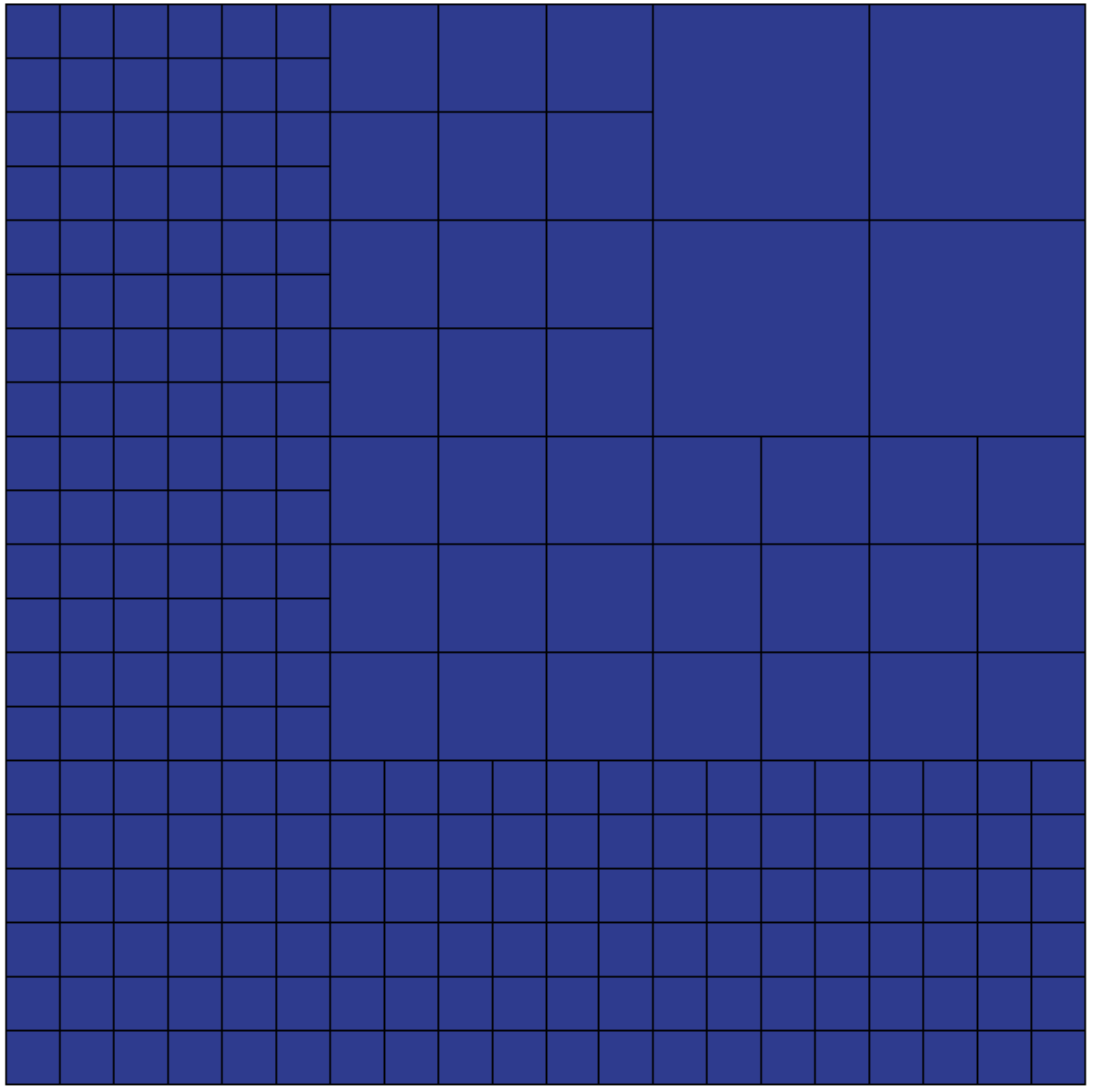}
\end{center}
\\
\vspace{-15pt}
\begin{center}
(a) Initial biquadratic B\'{e}zier mesh
\end{center}
&
\vspace{-15pt}
\begin{center}
(b) Initial bicubic B\'{e}zier mesh
\end{center}\\
\begin{center}
\includegraphics[scale=0.17]{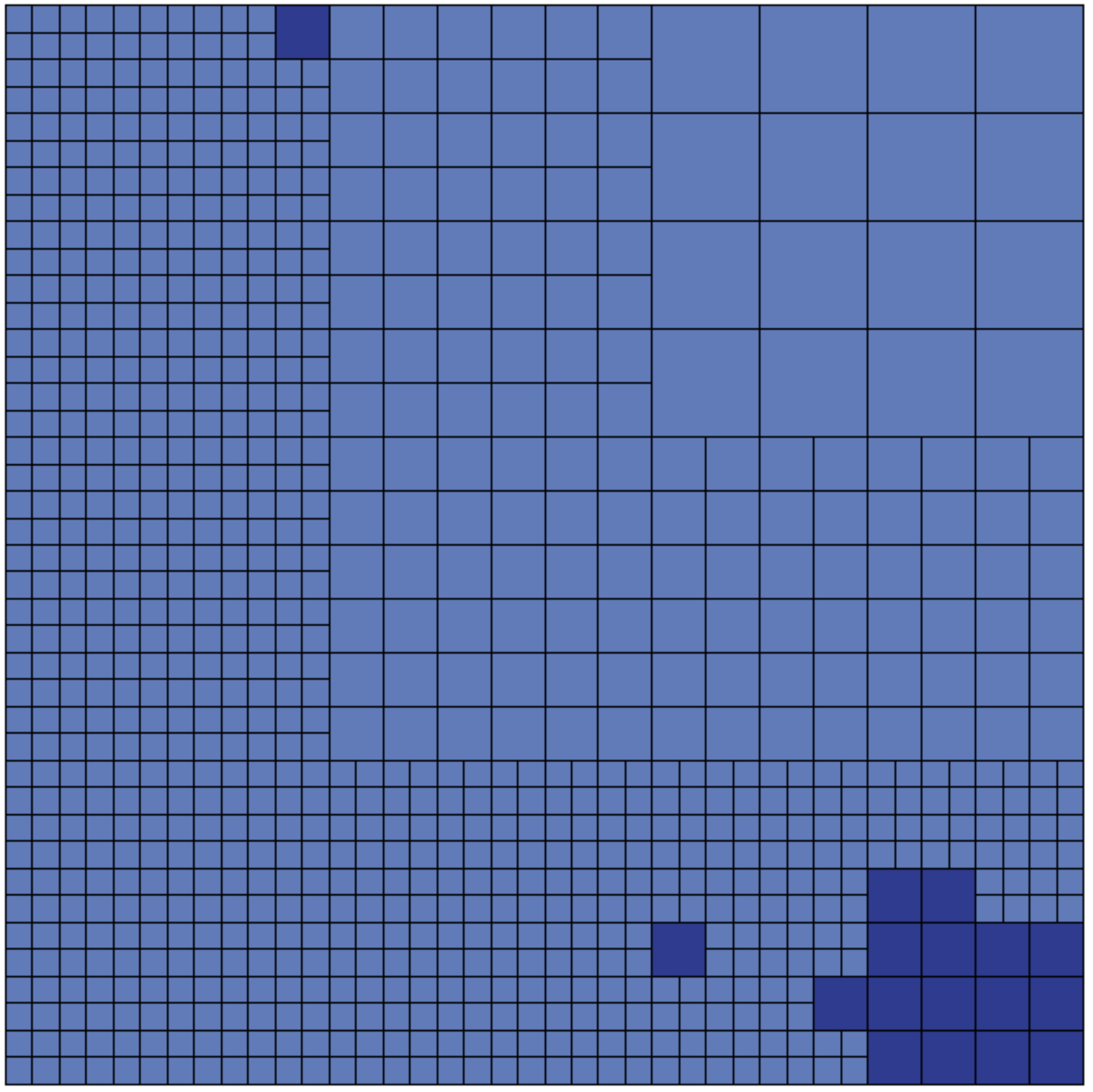}
\end{center} 
&
\begin{center}
\includegraphics[scale=0.17]{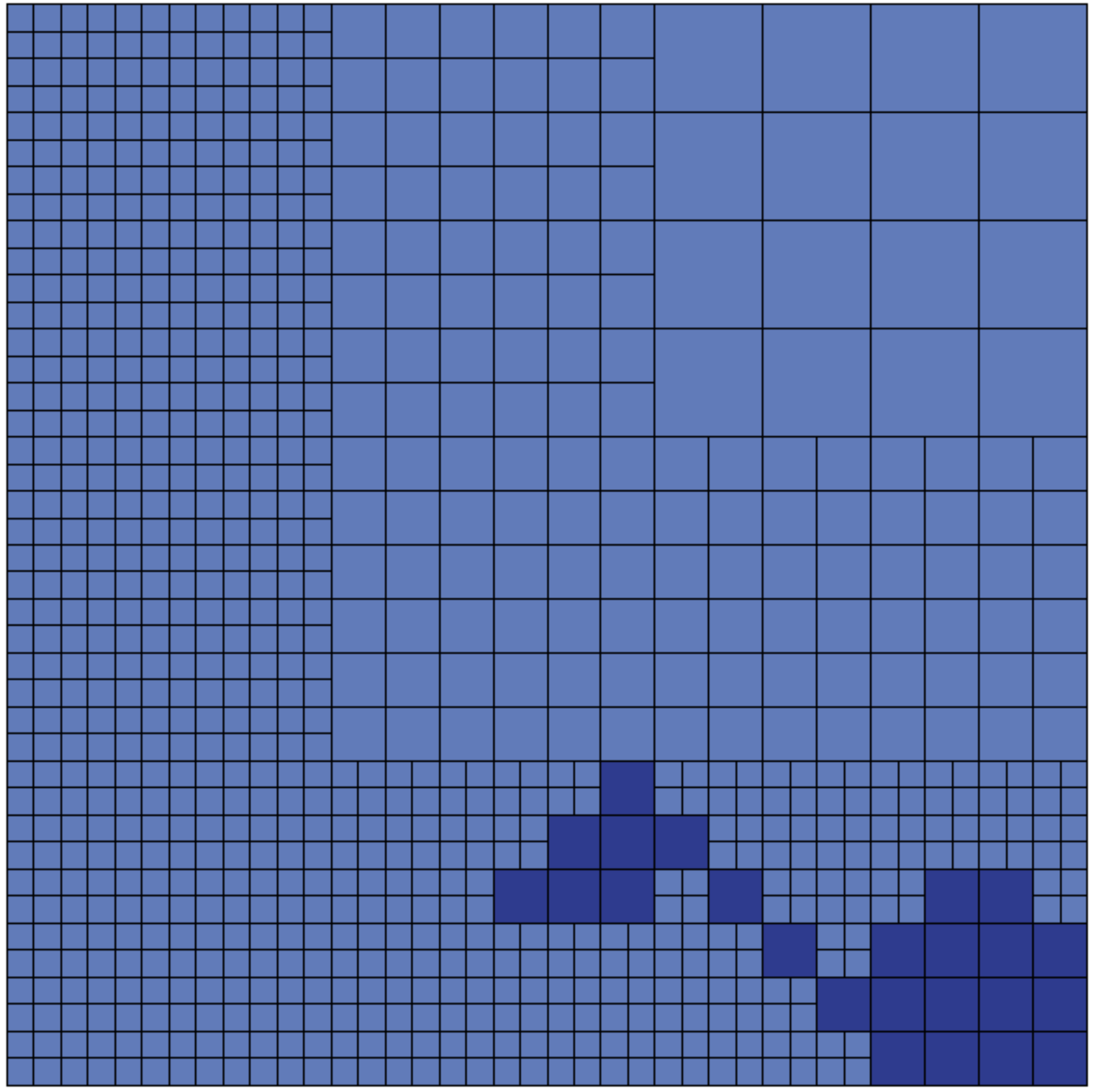}
\end{center}
\\
\vspace{-15pt}
\begin{center}
(c) Second biquadratic B\'{e}zier mesh\end{center}
&
\vspace{-15pt}
\begin{center}
(d) Second bicubic B\'{e}zier mesh
\end{center}\\
\begin{center}
\includegraphics[scale=0.17]{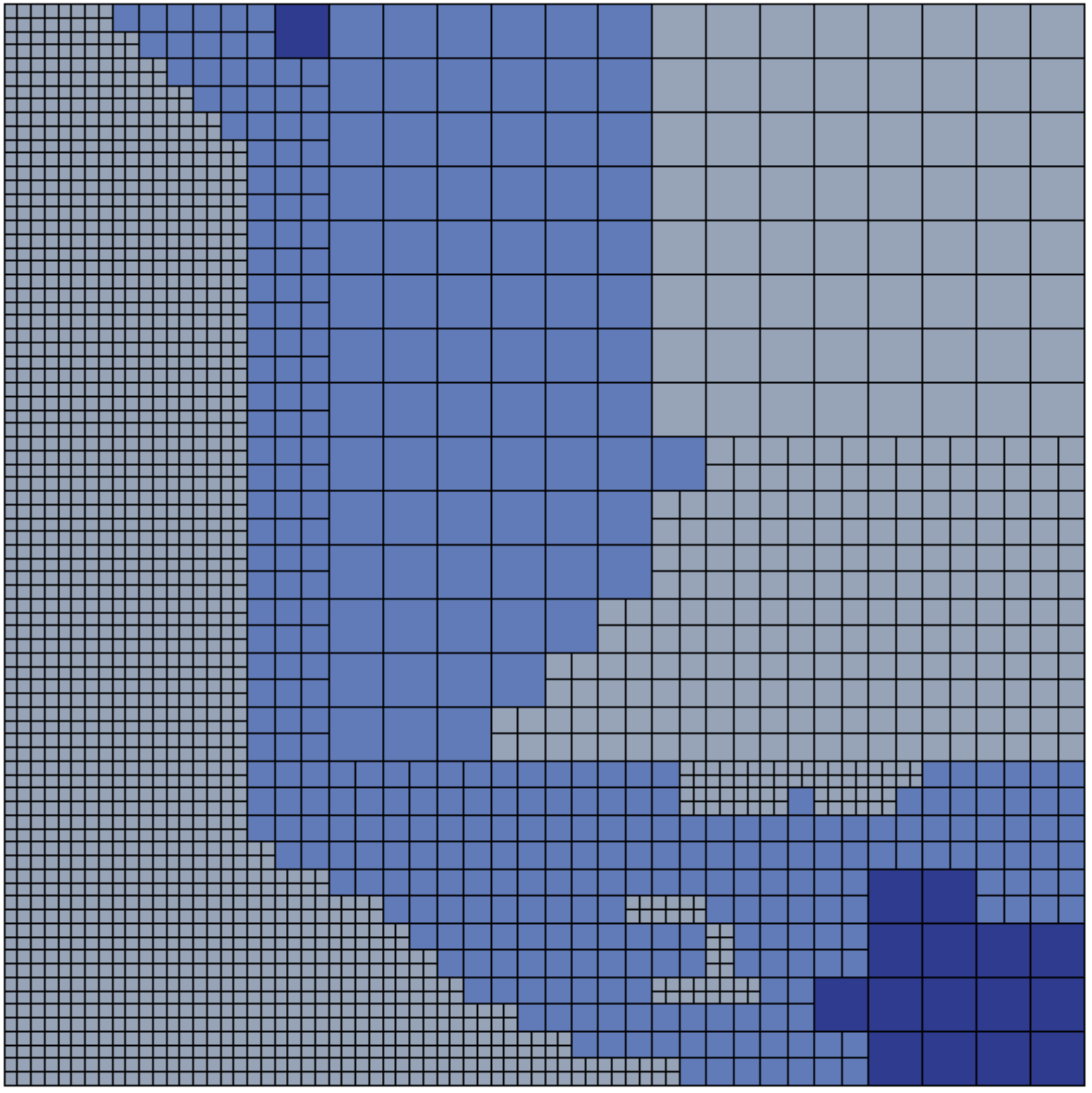}
\end{center} 
&
\begin{center}
\includegraphics[scale=0.17]{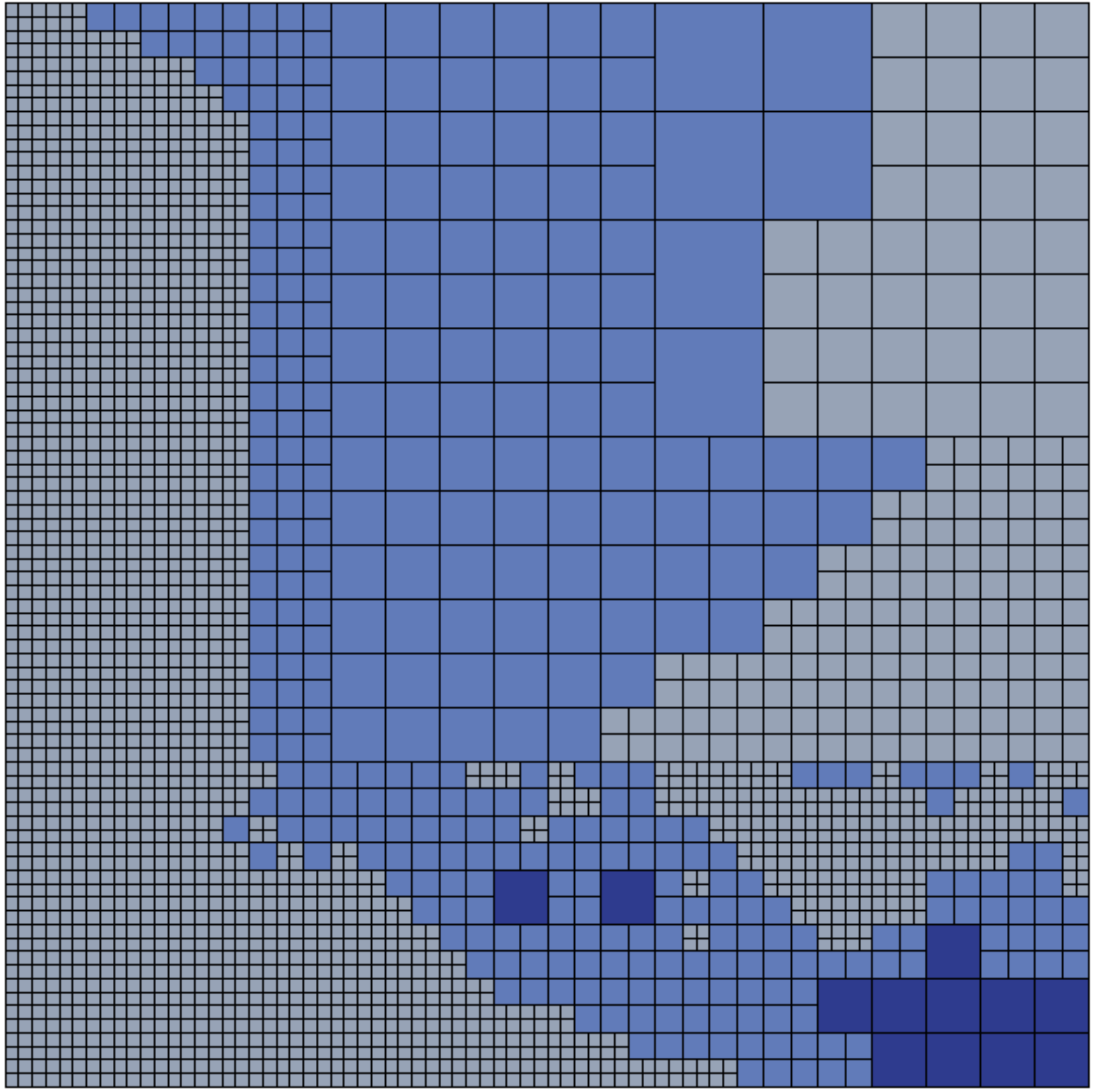}
\end{center}
\\
\vspace{-15pt}
\begin{center}
(e) Third biquadratic B\'{e}zier mesh
\end{center}
&
\vspace{-15pt}
\begin{center}
(f) Third bicubic B\'{e}zier mesh
\end{center}
\end{tabularx} }
\caption{The first two iterations of HASTS local refinement for the 
	biquadratic (left column) and bicubic (right column)
	 advection skew to the mesh problem.  The
  elements are colored according 
  to their level, $\alpha$, in the hierarchy. The
  biquadratic refinements form a nested sequence of $C^1$-continuous 
  HASTS spaces and the
  bicubic refinements form a
  nested sequence of $C^2$-continuous 
  HASTS spaces. }
\label{fig:skew_static_mesh_bq}
\end{center}
\end{figure}

\begin{figure}
\begin{center}
{\begin{tabularx}{0.9\textwidth}{XX}
\begin{center}
\includegraphics[scale=0.17]{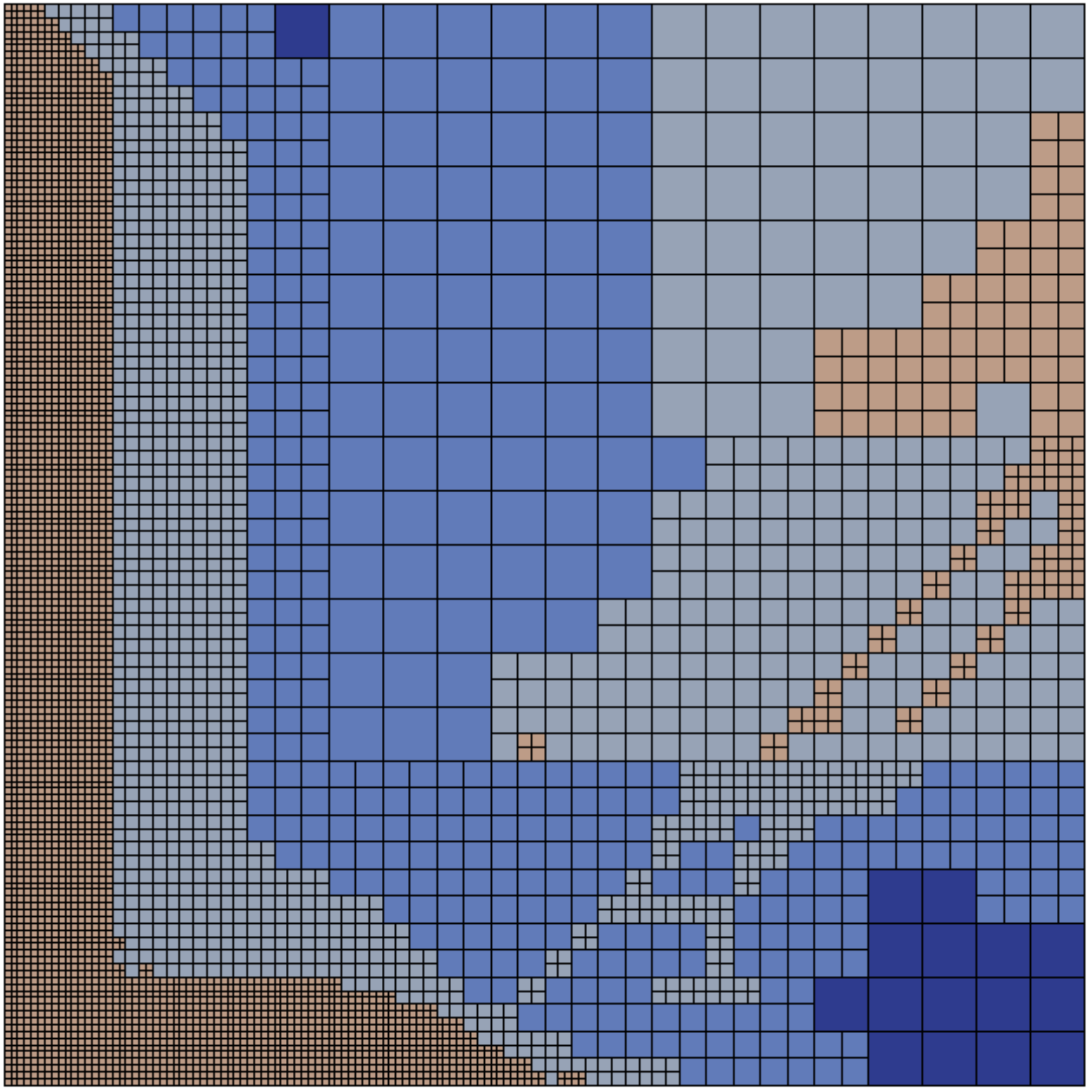}
\end{center} 
&
\begin{center}
\includegraphics[scale=0.17]{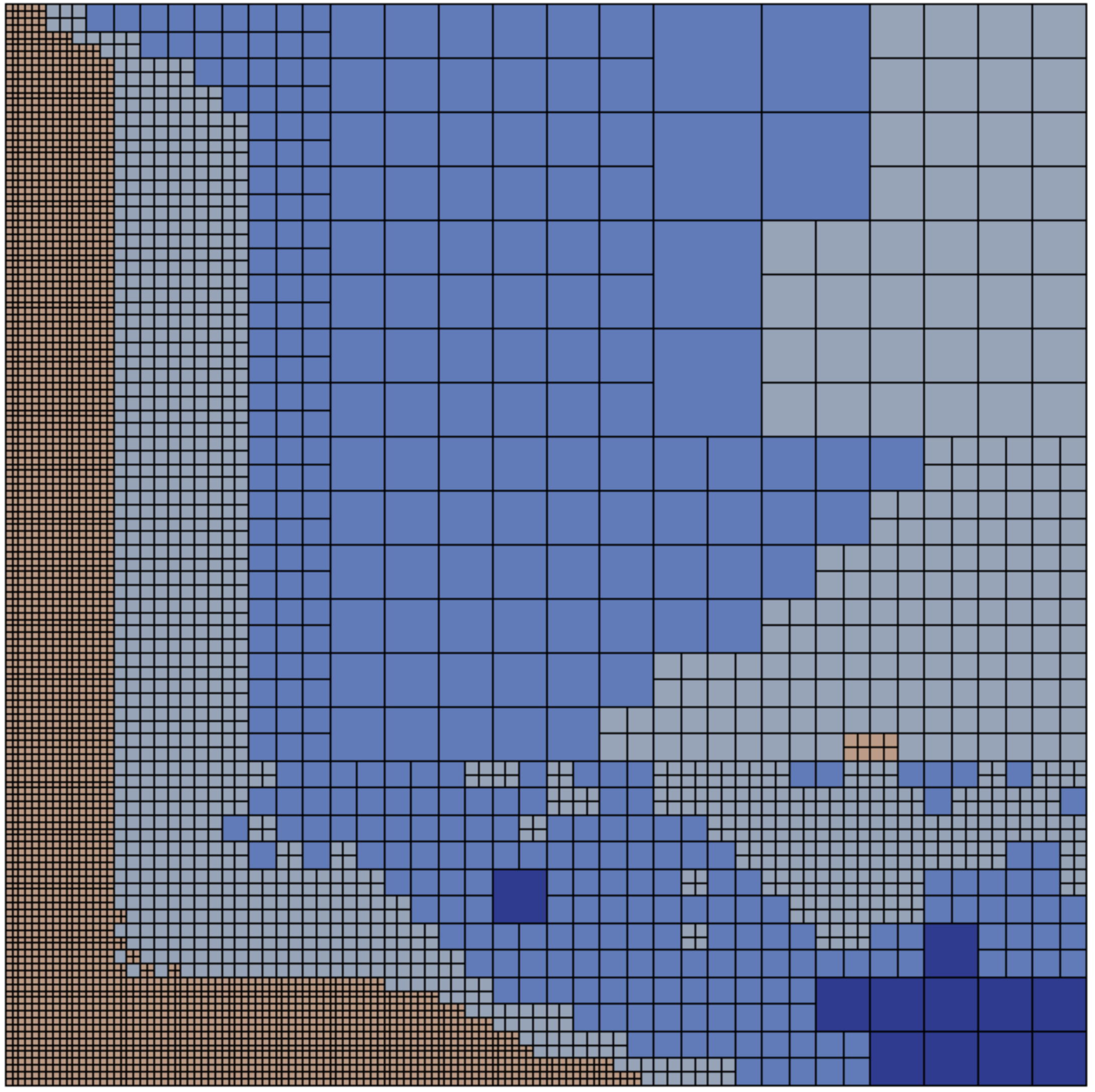}
\end{center}
\\
\vspace{-15pt}
\begin{center}
(a) Fourth biquadratic B\'{e}zier mesh
\end{center}
&
\vspace{-15pt}
\begin{center}
(b) Fourth bicubic B\'{e}zier mesh
\end{center}\\
\begin{center}
\includegraphics[scale=0.17]{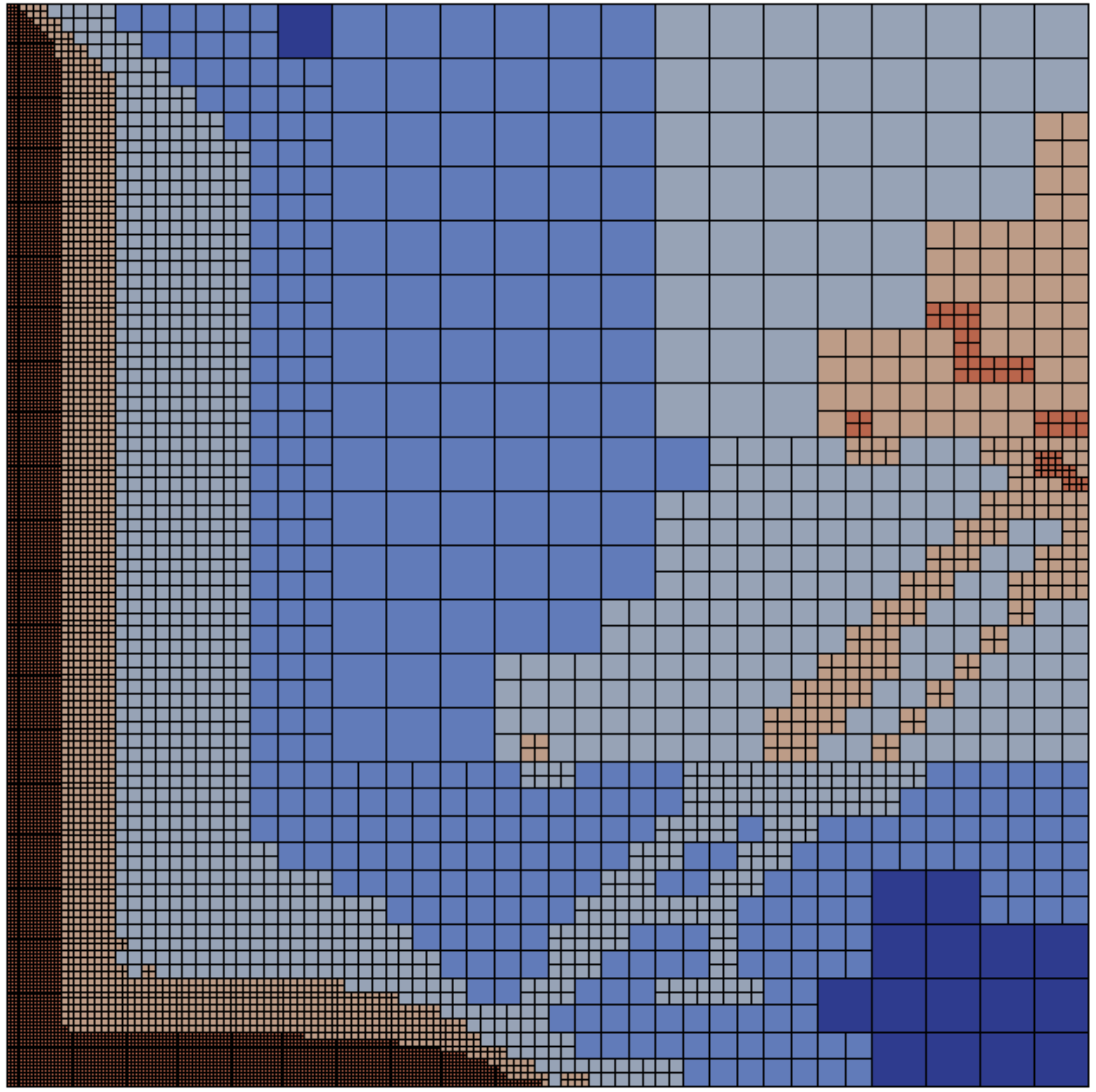}
\end{center} 
&
\begin{center}
\includegraphics[scale=0.17]{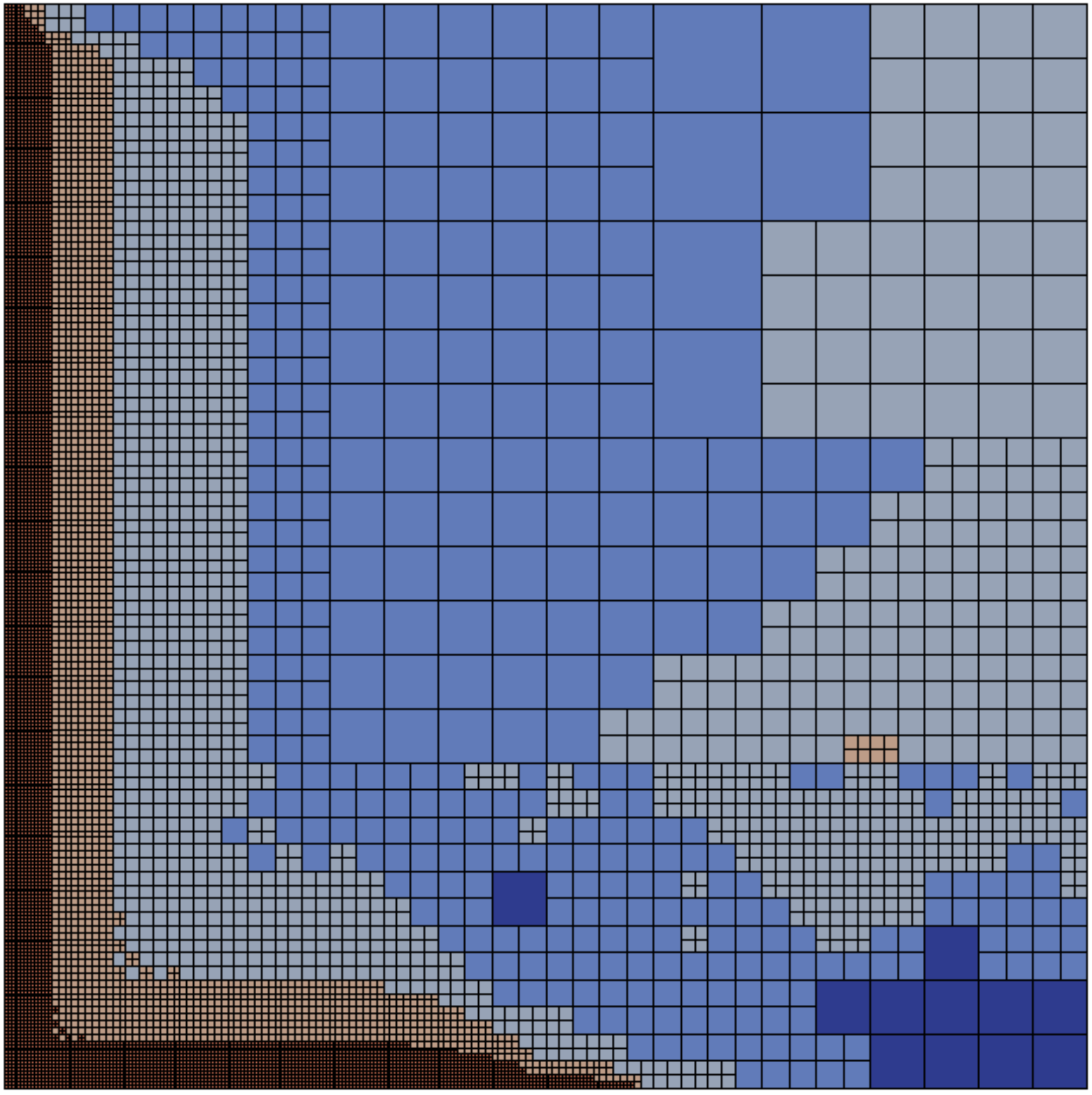}
\end{center}
\\
\vspace{-15pt}
\begin{center}
(c) Fifth biquadratic B\'{e}zier mesh\end{center}
&
\vspace{-15pt}
\begin{center}
(d) Fifth bicubic B\'{e}zier mesh
\end{center}\\
\begin{center}
\includegraphics[scale=0.17]{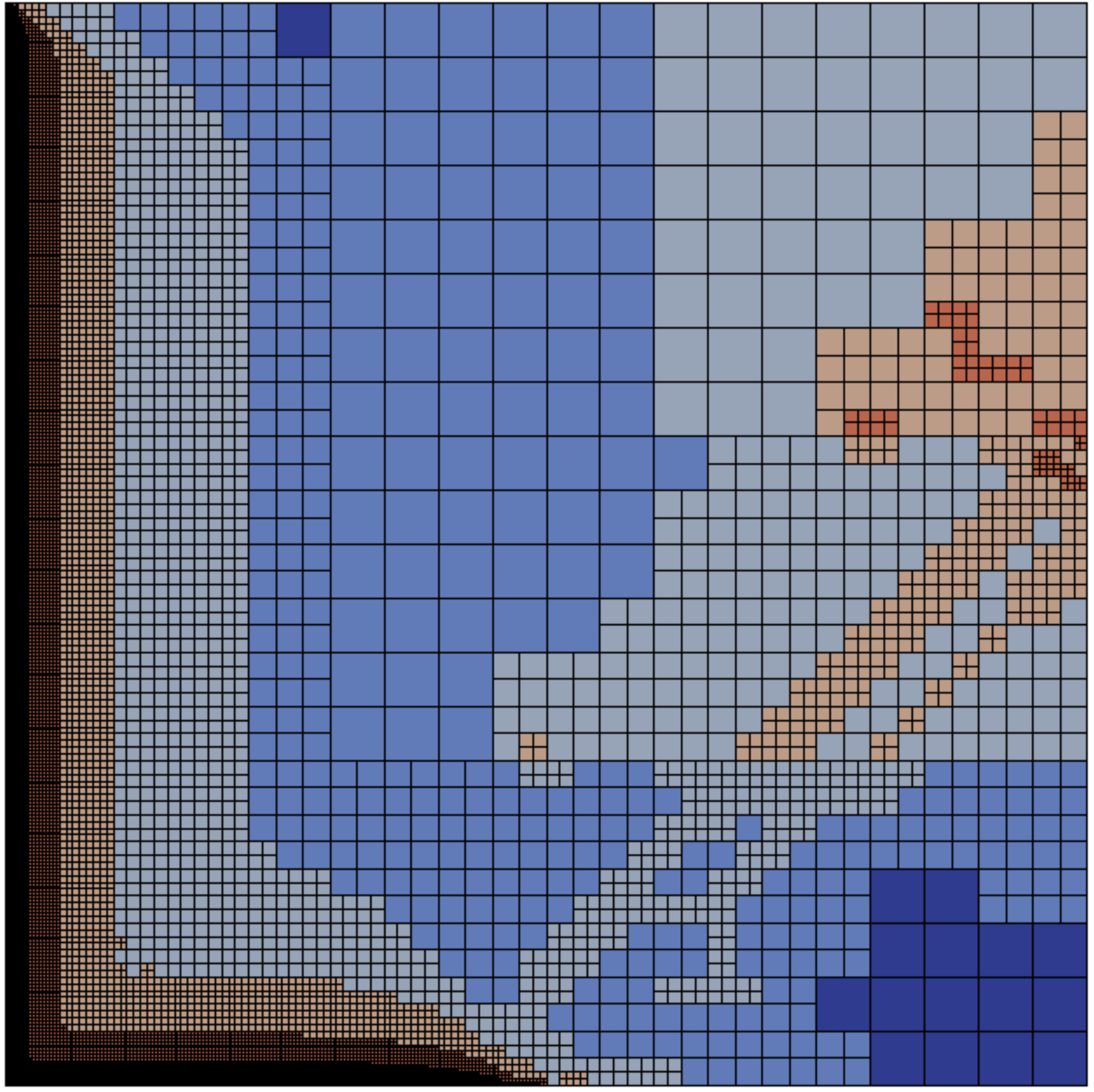}
\end{center} 
&
\begin{center}
\includegraphics[scale=0.17]{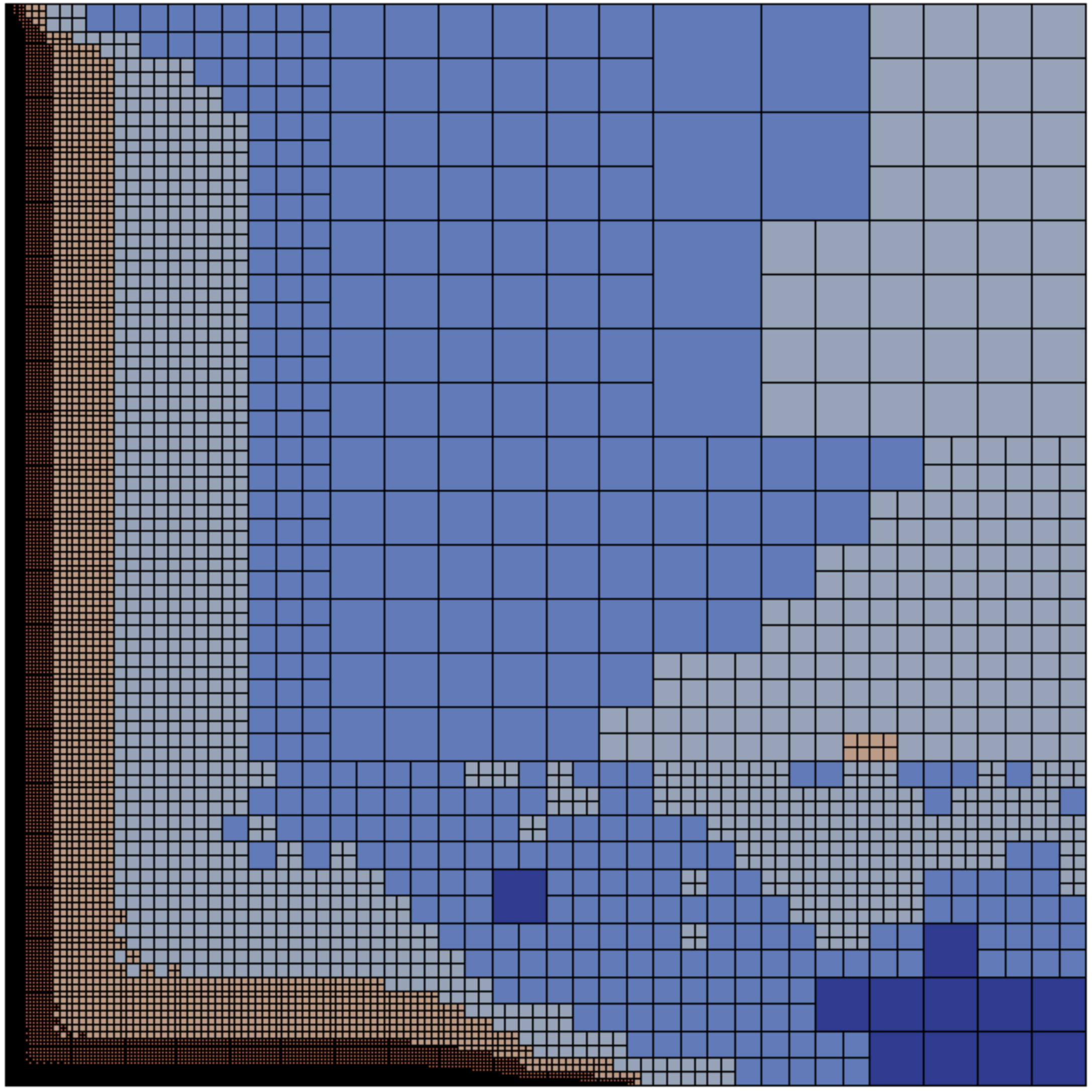}
\end{center}
\\
\vspace{-15pt}
\begin{center}
(e) Sixth biquadratic B\'{e}zier mesh\end{center}
&
\vspace{-15pt}
\begin{center}
(f) Sixth bicubic B\'{e}zier mesh
\end{center}
\end{tabularx} }
\caption{The third through fifth iterations of HASTS local refinement for the 
	biquadratic (left column) and bicubic (right column)
	 advection skew to the mesh problem.  The
  elements are colored according 
  to their level, $\alpha$, in the hierarchy. }
\label{fig:skew_static_mesh_bc}
\end{center}
\end{figure}

\begin{figure}
\begin{center}
{\begin{tabularx}{1\textwidth}{XX}
\begin{center}
\includegraphics[scale=0.23]{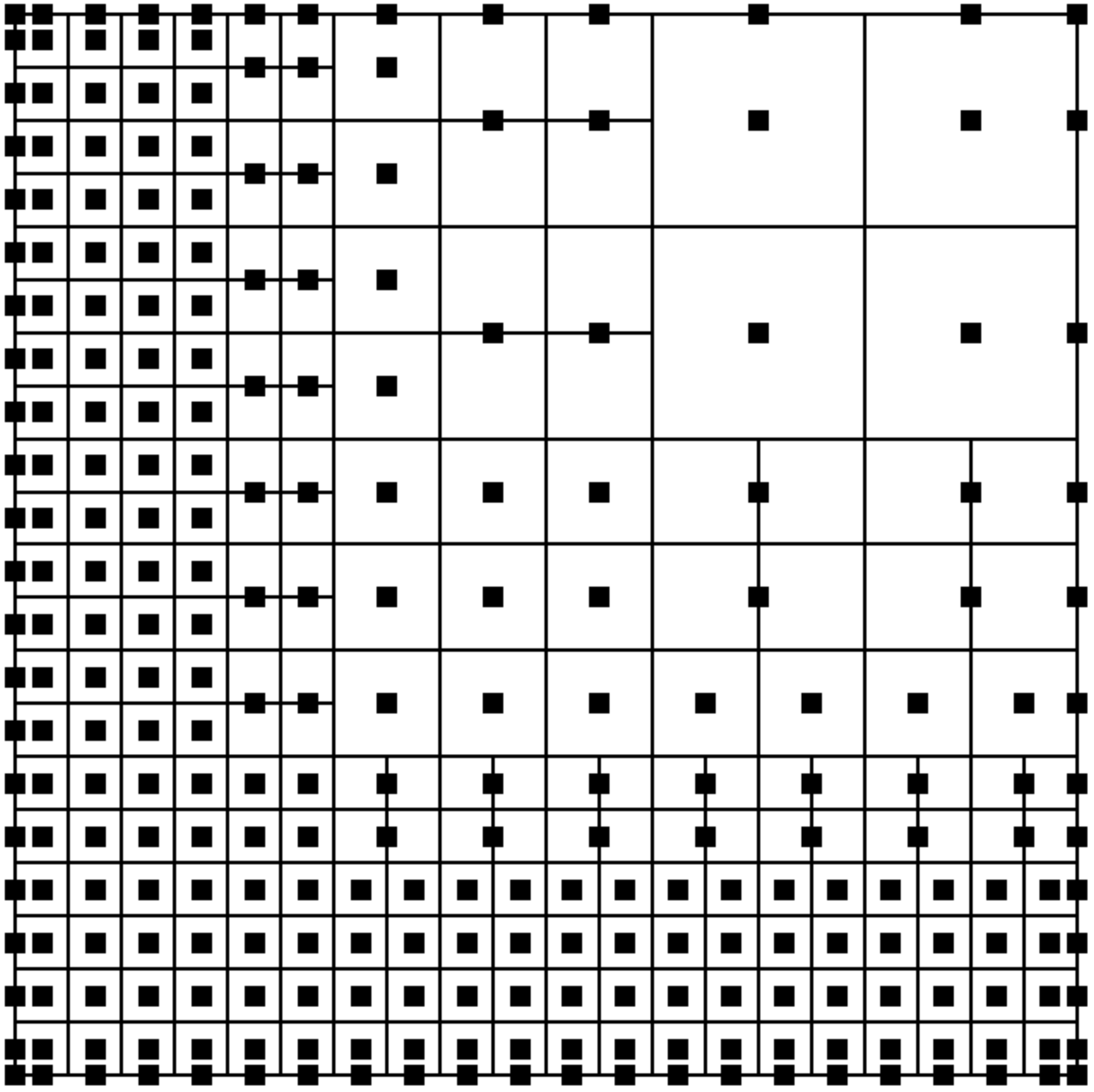}
\end{center} 
&
\begin{center}
\includegraphics[scale=0.23]{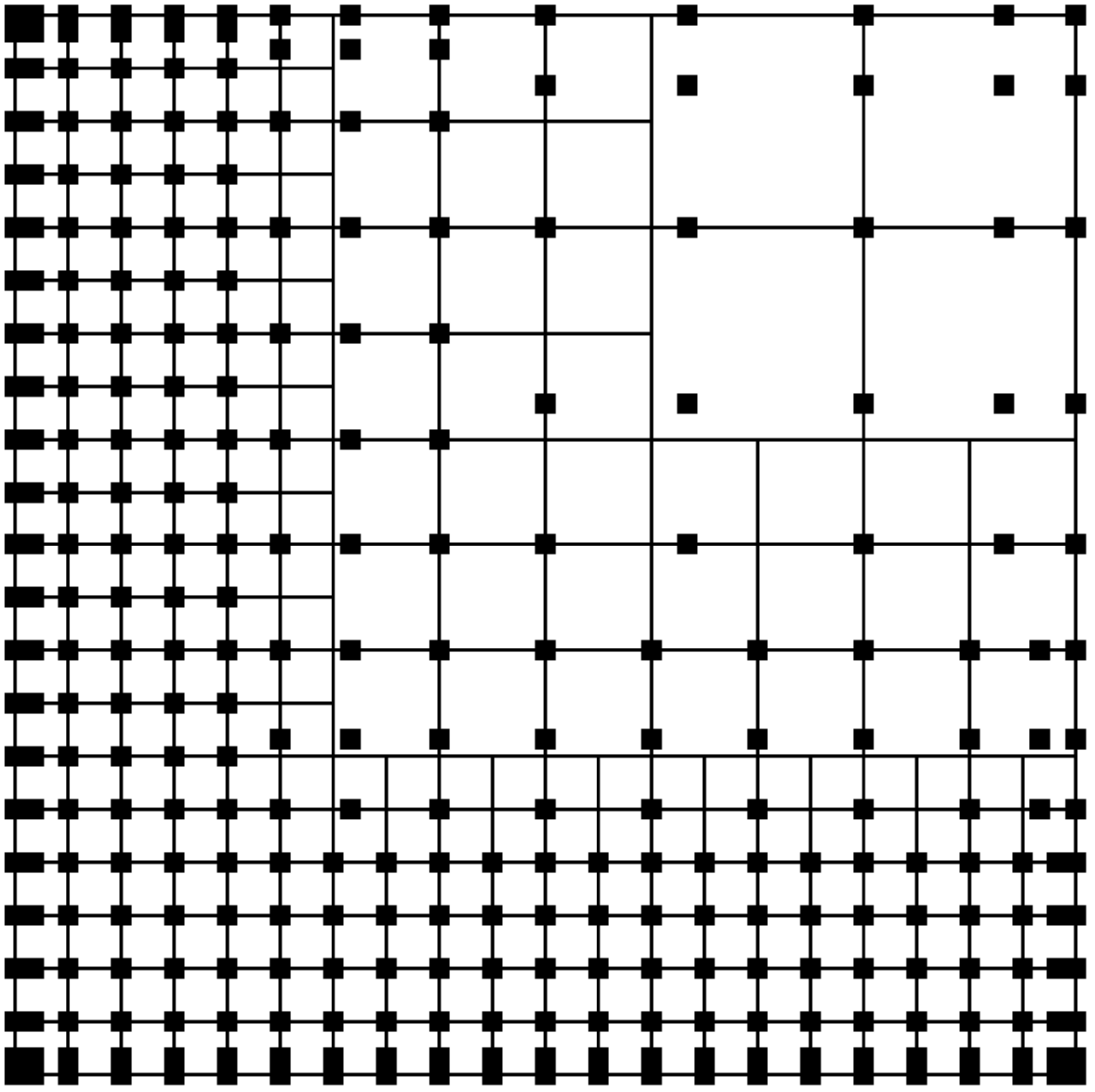}
\end{center}
\\
\vspace{-15pt}
\begin{center}
(a) Initial quadratic Greville abscissae
\end{center}
&
\vspace{-15pt}
\begin{center}
(b) Initial bicubic Greville abscissae
\end{center}
\\
\begin{center}
\includegraphics[scale=0.23]{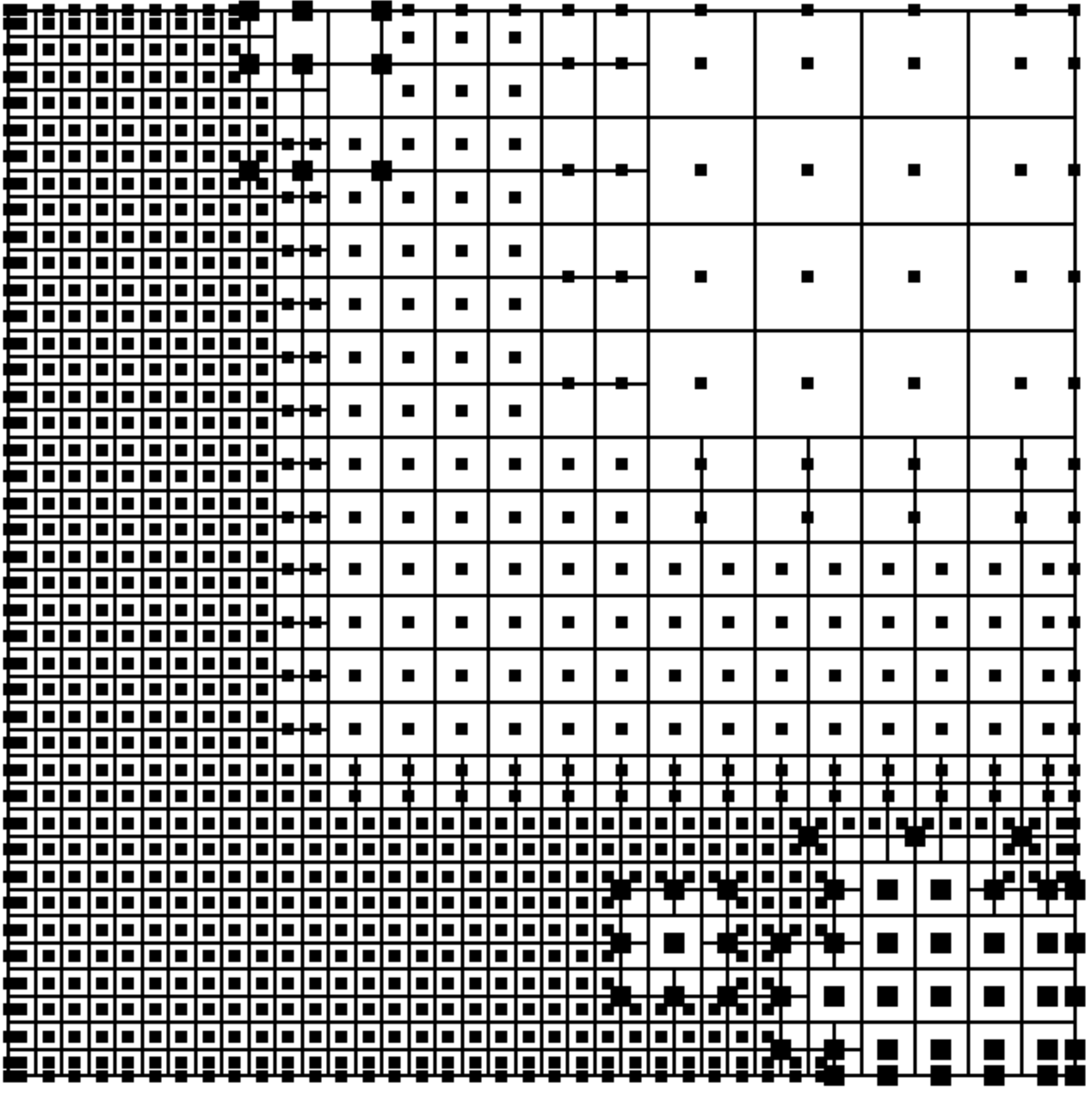}
\end{center} 
&
\begin{center}
\includegraphics[scale=0.23]{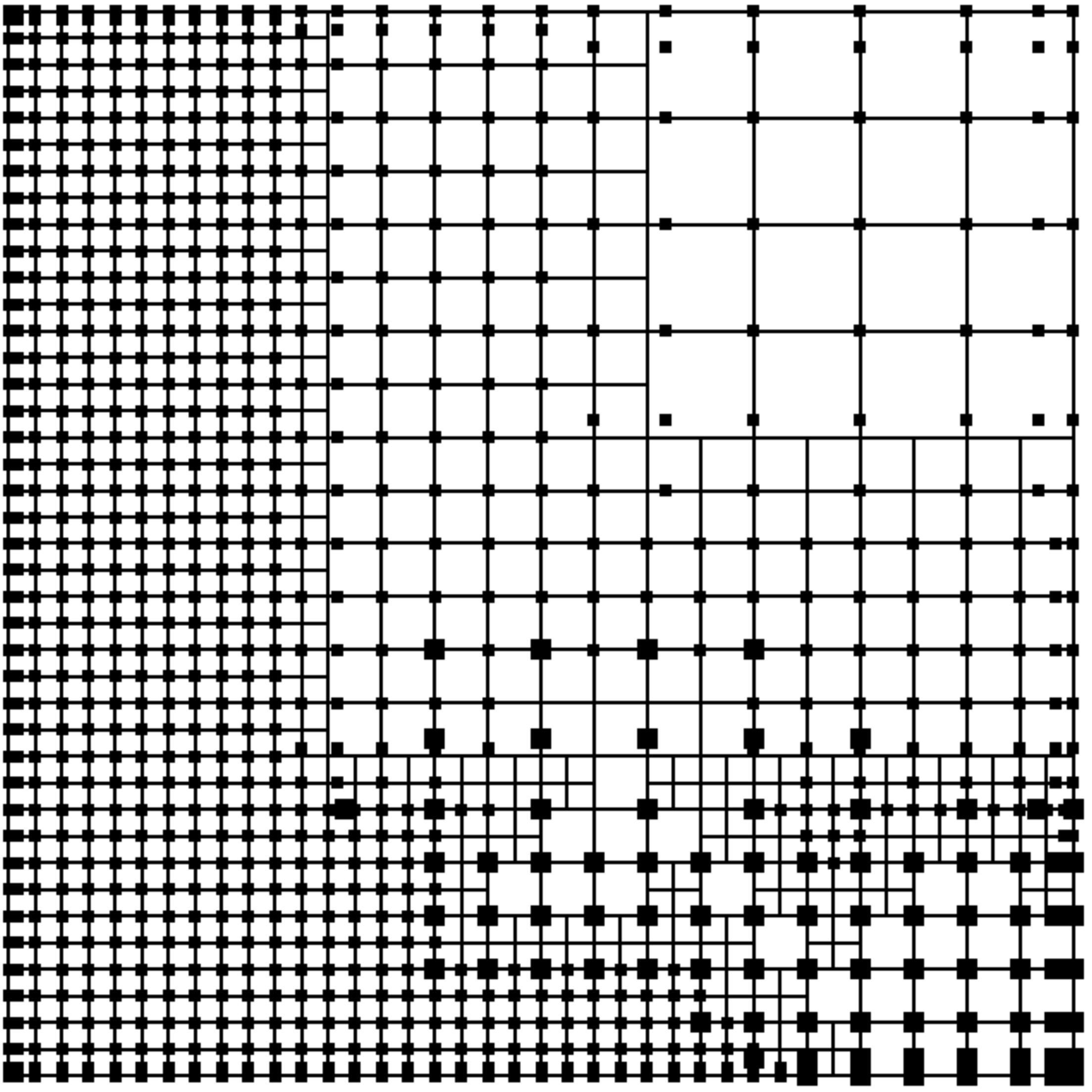}
\end{center}
\\
\vspace{-15pt}
\begin{center}
(c) Greville abscissae for second biquadratic mesh
\end{center}
&
\vspace{-15pt}
\begin{center}
(d) Greville abscissae for second bicubic mesh
\end{center}
\end{tabularx} }
\caption{The Greville abscissae (black dots) corresponding to the basis functions for
  the first two iterations of HASTS local refinement for the 
  biquadratic (left column) and bicubic (right column)
  advection skew to the mesh problem.  The size of the dot corresponds to the 
  basis function level with larger dots corresponding to lower levels. }
\label{fig:skew_static_mesh_funconly_bq}
\end{center}
\end{figure}
\begin{figure}
\begin{center}
{\begin{tabularx}{1\textwidth}{XX}
\begin{center}
\includegraphics[scale=0.23]{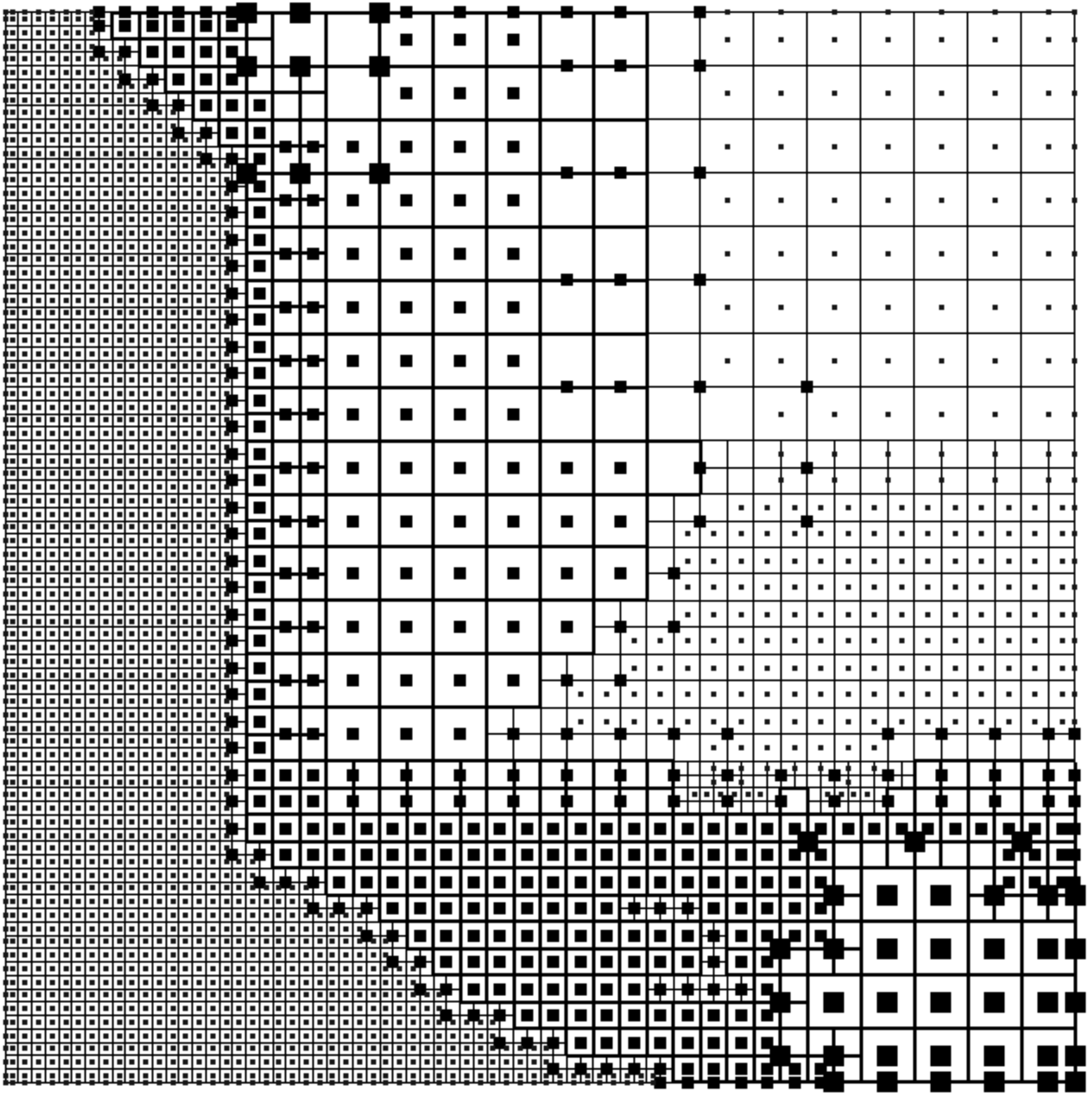}
\end{center} 
&
\begin{center}
\includegraphics[scale=0.23]{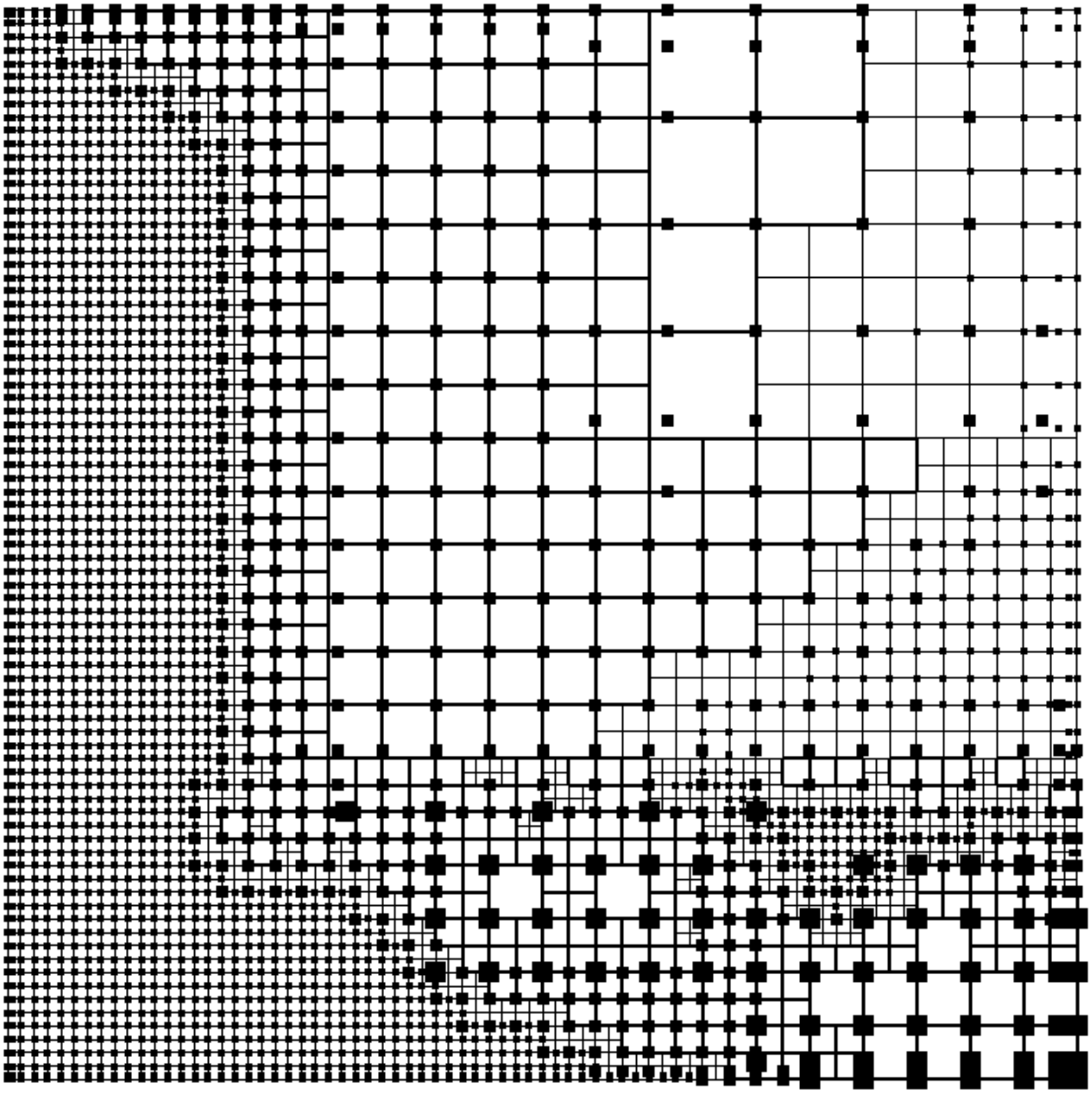}
\end{center}
\\
\vspace{-15pt}
\begin{center}
(a) Greville abscissae for third biquadratic mesh
\end{center}
&
\vspace{-15pt}
\begin{center}
(b) Greville abscissae for third bicubic mesh\end{center}
\\
\begin{center}
\includegraphics[scale=0.23]{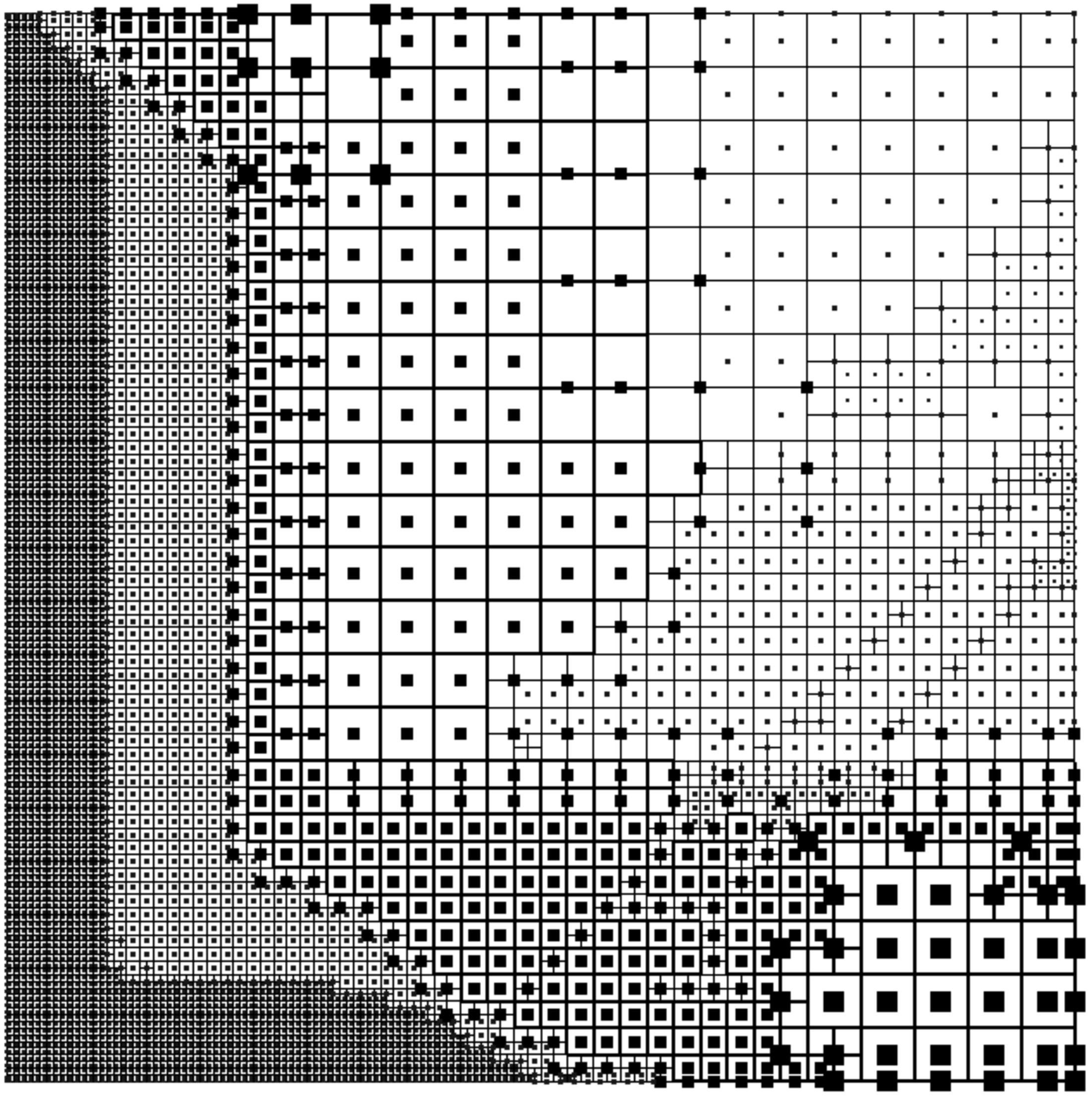}
\end{center} 
&
\begin{center}
\includegraphics[scale=0.23]{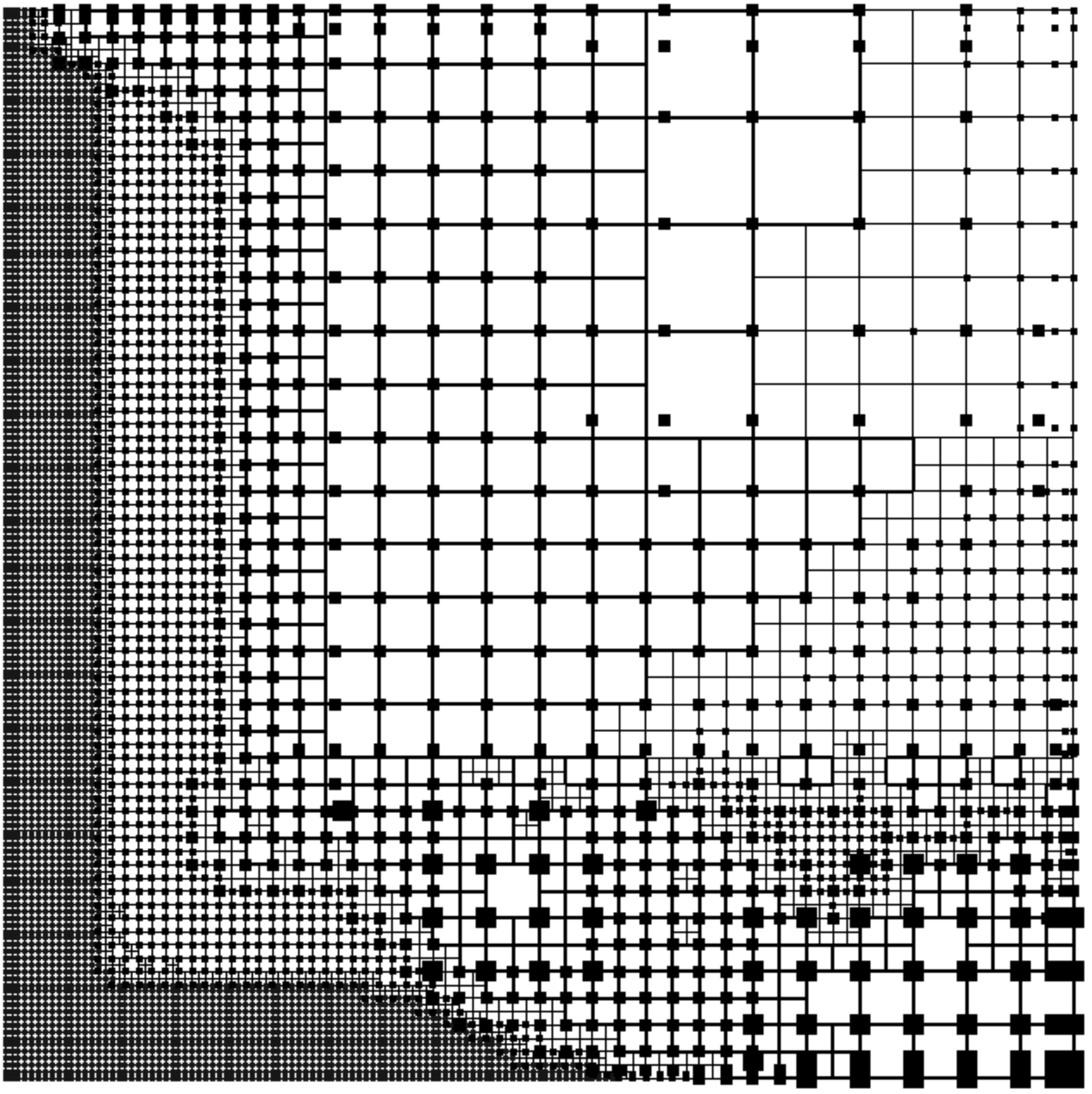}
\end{center}
\\
\vspace{-15pt}
\begin{center}
(c) Greville abscissae for fourth biquadratic mesh
\end{center}
&
\vspace{-15pt}
\begin{center}
(d) Greville abscissae for fourth bicubic mesh
\end{center}
\end{tabularx} }
\caption{The Greville abscissae (black dots) corresponding to the basis functions for
the third and fourth iterations of HASTS local refinement for the 
biquadratic (left column) and bicubic (right column)
advection skew to the mesh problem.  The size of the dot corresponds to the 
basis function level with larger dots corresponding to lower levels. }
\label{fig:skew_static_mesh_funconly_bm}
\end{center}
\end{figure}

\begin{figure}
\begin{center}
{\begin{tabularx}{1\textwidth}{XX}
\begin{center}
\includegraphics[scale=0.23]{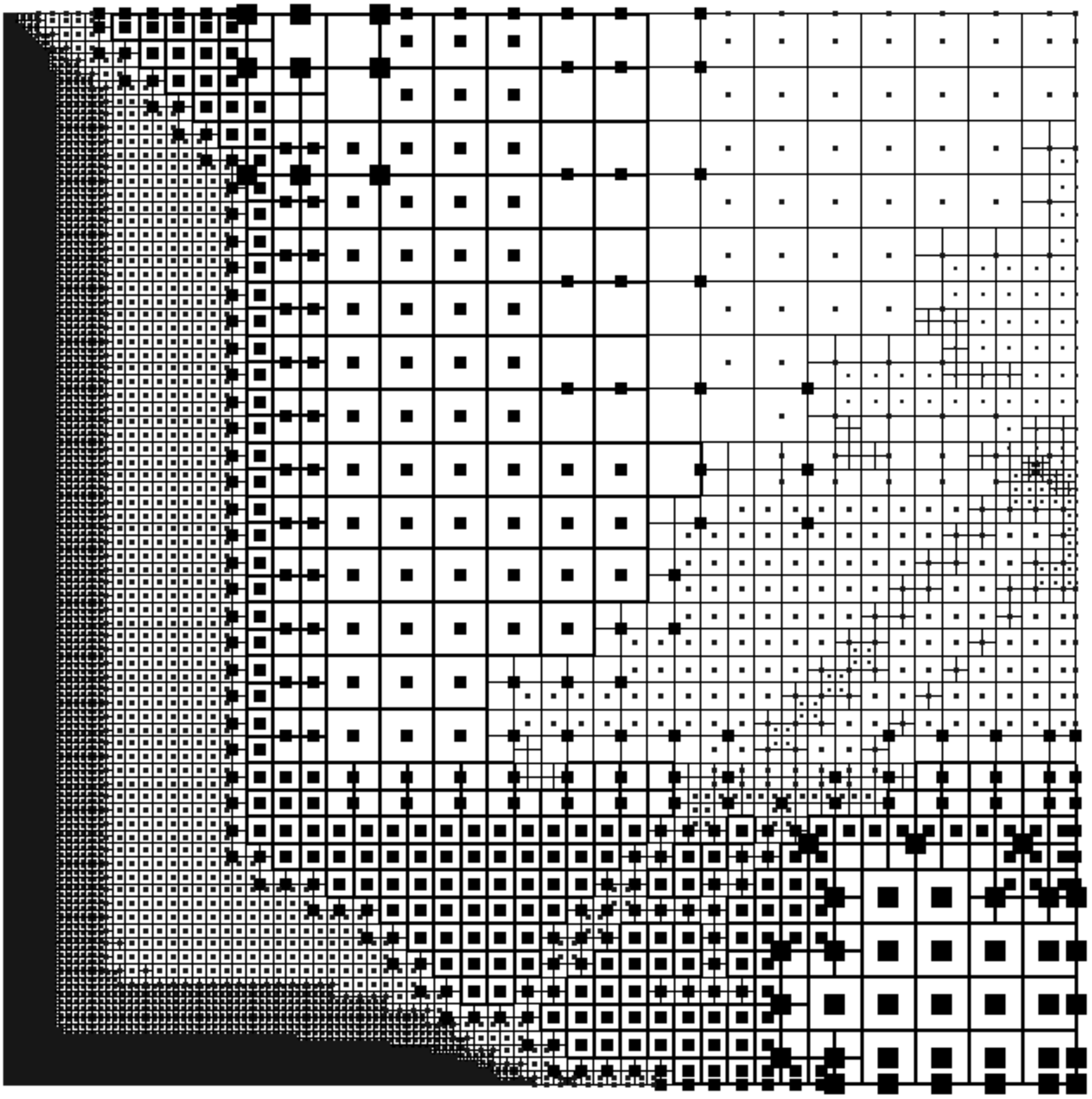}
\end{center} 
&
\begin{center}
\includegraphics[scale=0.23]{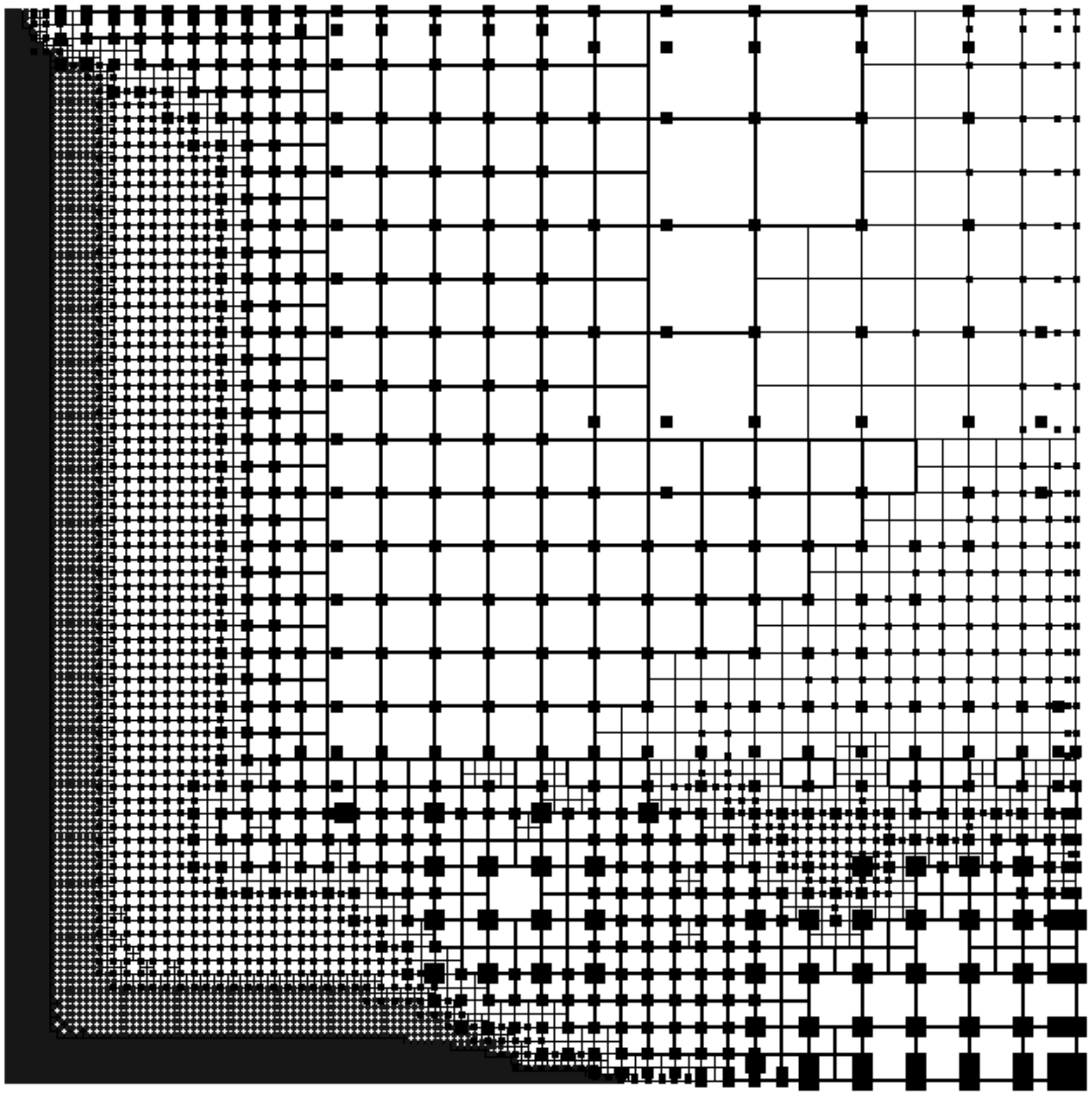}
\end{center}
\\
\vspace{-15pt}
\begin{center}
(a) Greville abscissae for fifth biquadratic mesh\end{center}
&
\vspace{-15pt}
\begin{center}
(b) Greville abscissae for fifth bicubic mesh
\end{center}\\
\begin{center}
\includegraphics[scale=0.23]{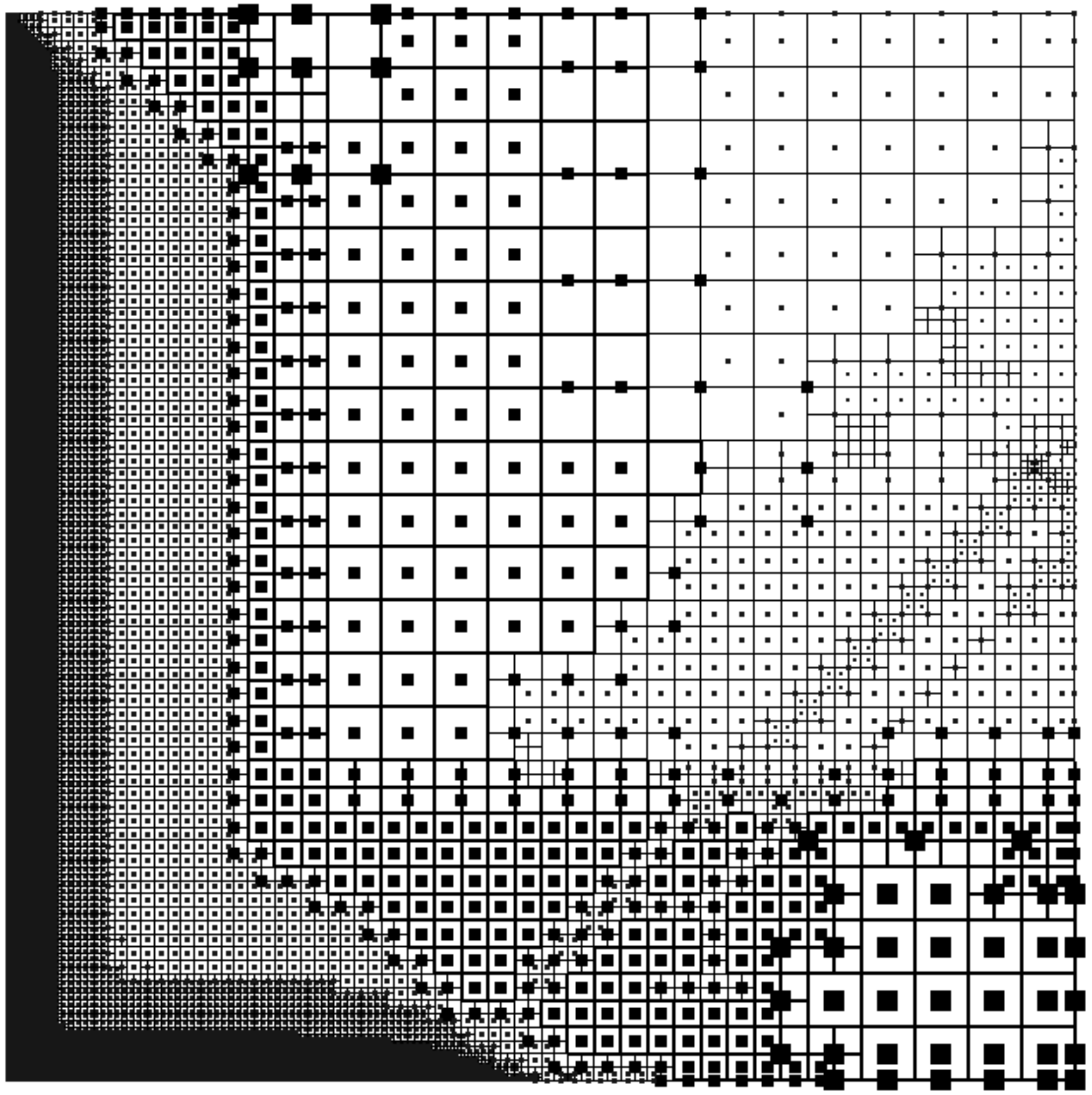}
\end{center} 
&
\begin{center}
\includegraphics[scale=0.23]{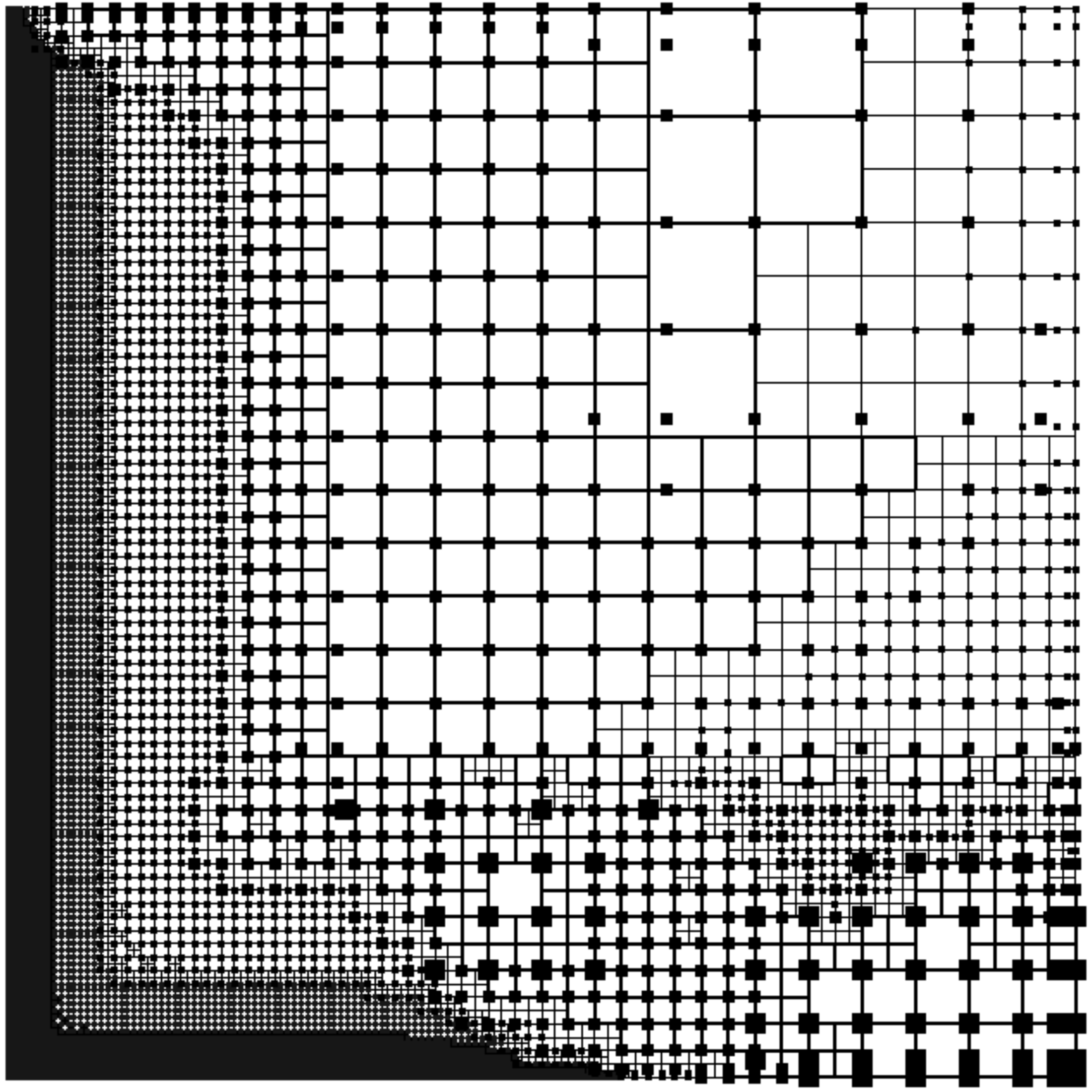}
\end{center}
\\
\vspace{-15pt}
\begin{center}
(c) Greville abscissae for sixth biquadratic mesh
\end{center}
&
\vspace{-15pt}
\begin{center}
(d) Greville abscissae for sixth bicubic mesh
\end{center}
\end{tabularx} }
\caption{The Greville abscissae (black dots) corresponding to the basis functions for
  the fifth and sixth iterations of HASTS local refinement for the 
  biquadratic (left column) and bicubic (right column)
  advection skew to the mesh problem.  The size of the dot corresponds to the 
  basis function with larger dots corresponding to lower levels.}
\label{fig:skew_static_mesh_funconly_bc}
\end{center}
\end{figure}

\begin{figure}
\begin{center}
\begin{tabularx}{0.9\textwidth}{X}
{\begin{tabularx}{0.9\textwidth}{XX}
\begin{center}
\includegraphics[scale=0.25]{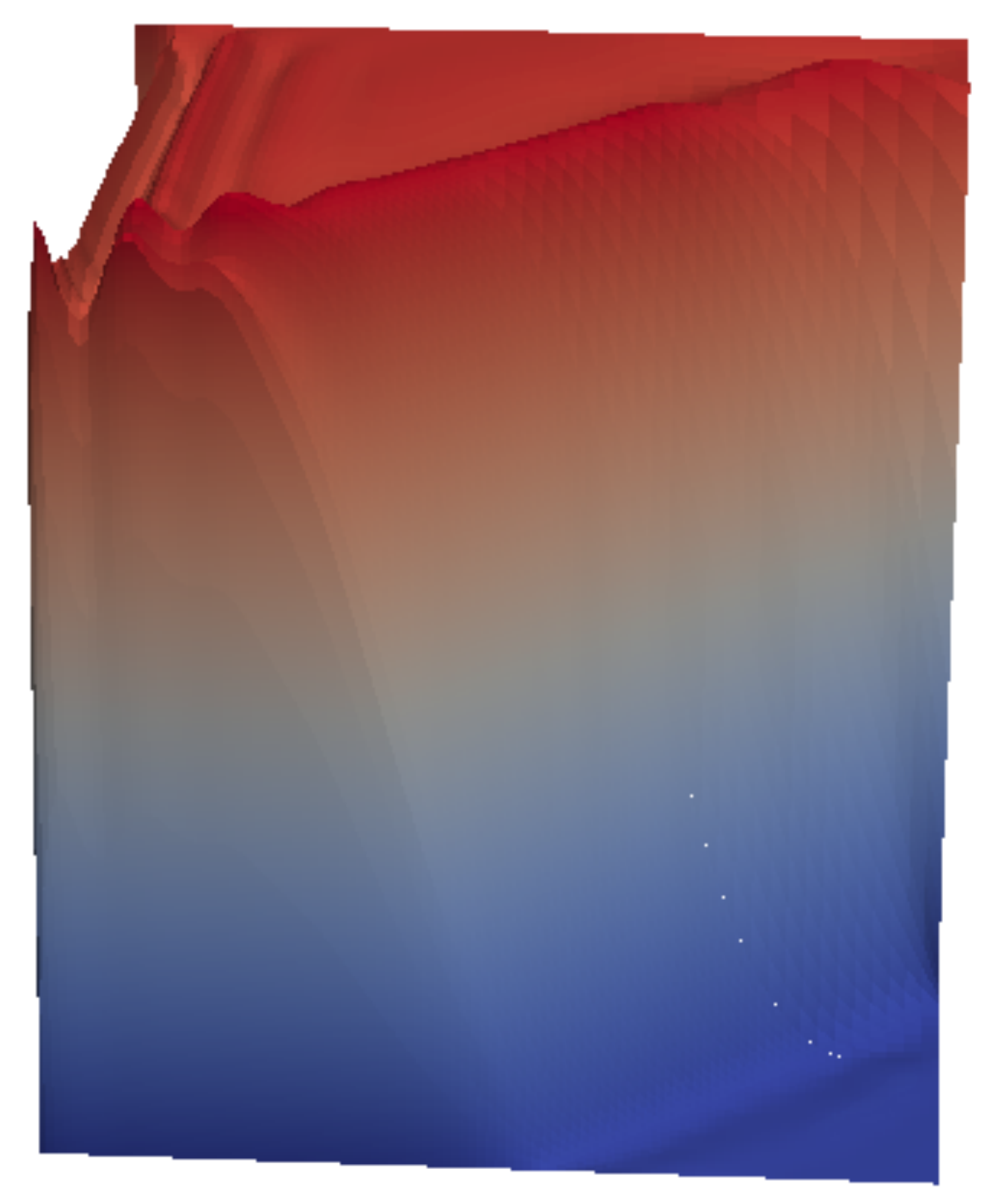}
\end{center} 
&
\begin{center}
\includegraphics[scale=0.25]{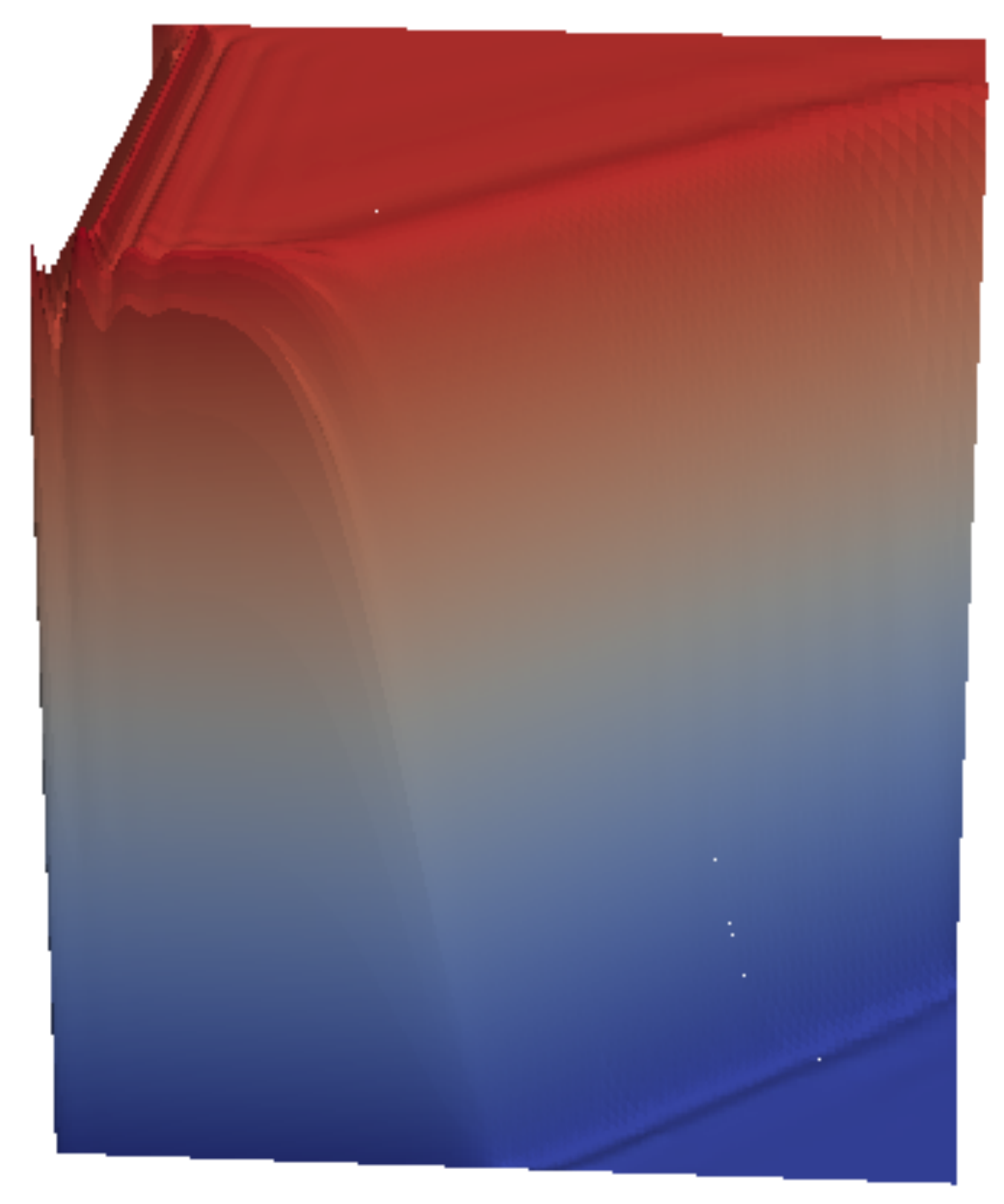}
\end{center}
\\
\vspace{-15pt}
\begin{center}
(a) Solution 1
\end{center}
&
\vspace{-15pt}
\begin{center}
(b) Solution 2
\end{center}\\
\begin{center}
\includegraphics[scale=0.25]{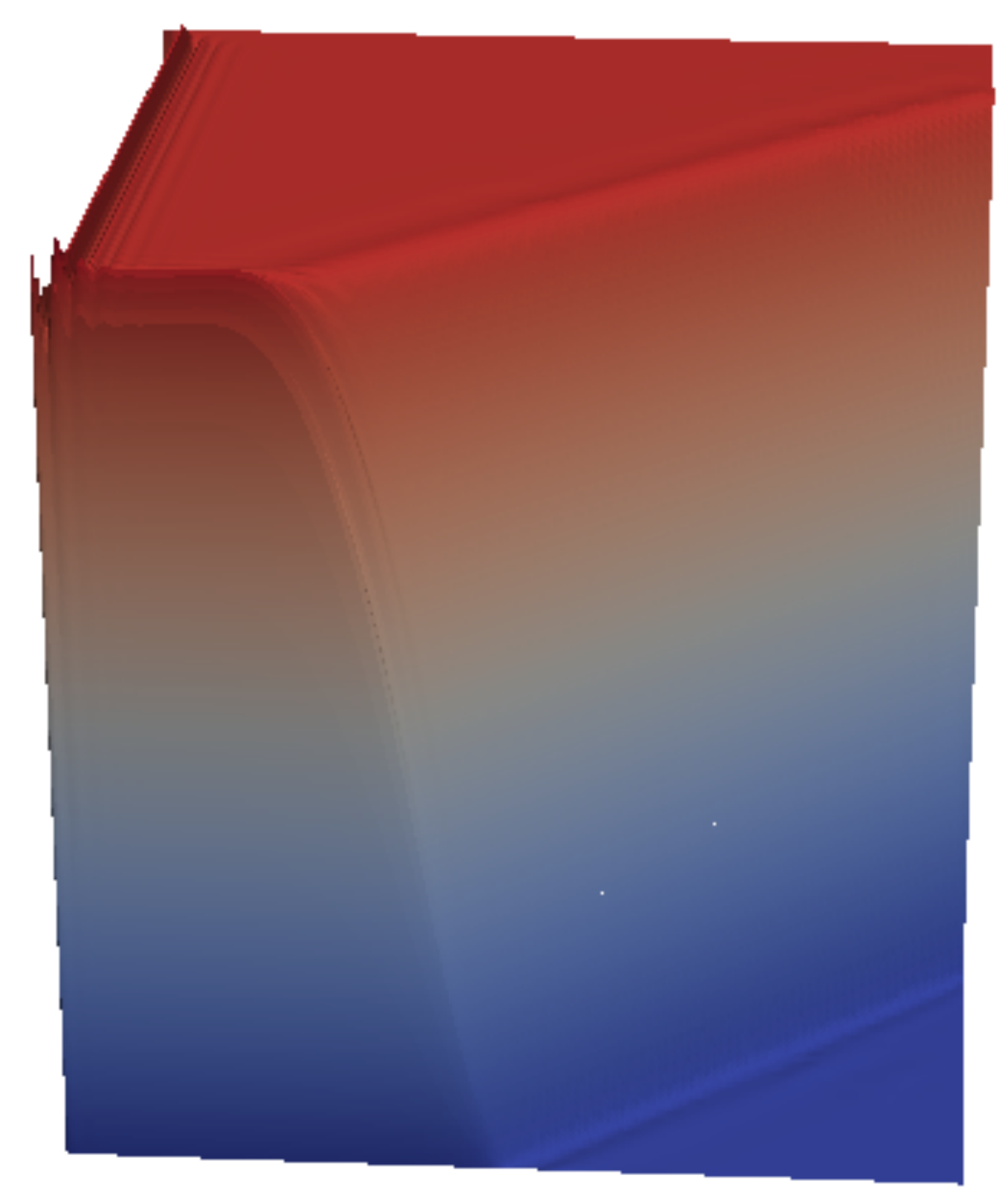}
\end{center} 
&
\begin{center}
\includegraphics[scale=0.25]{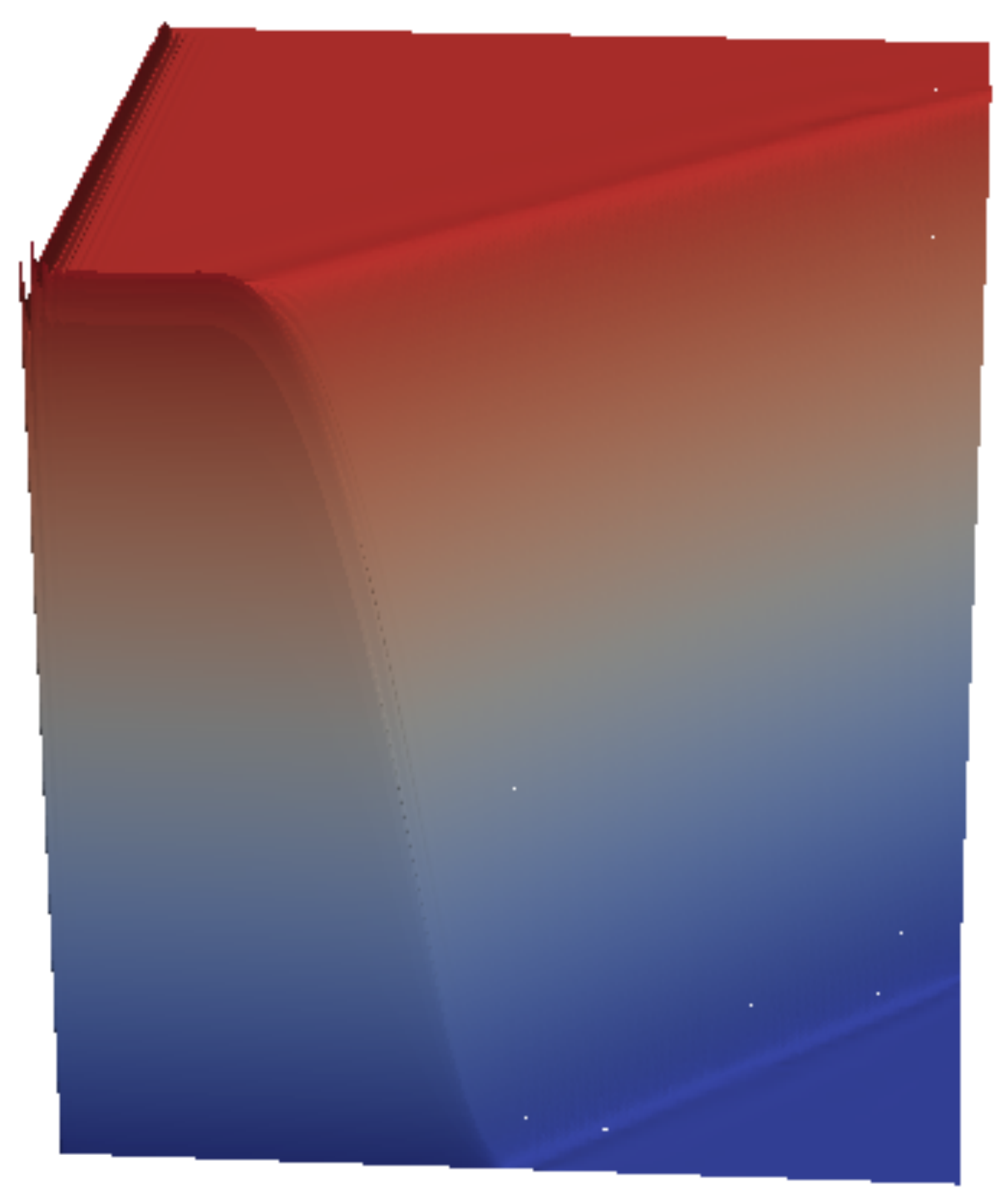}
\end{center}
\\
\vspace{-15pt}
\begin{center}
(c) Solution 3
\end{center}
&
\vspace{-15pt}
\begin{center}
(d) Solution 4
\end{center}
\end{tabularx}}
\begin{center}
\includegraphics[scale=0.25]{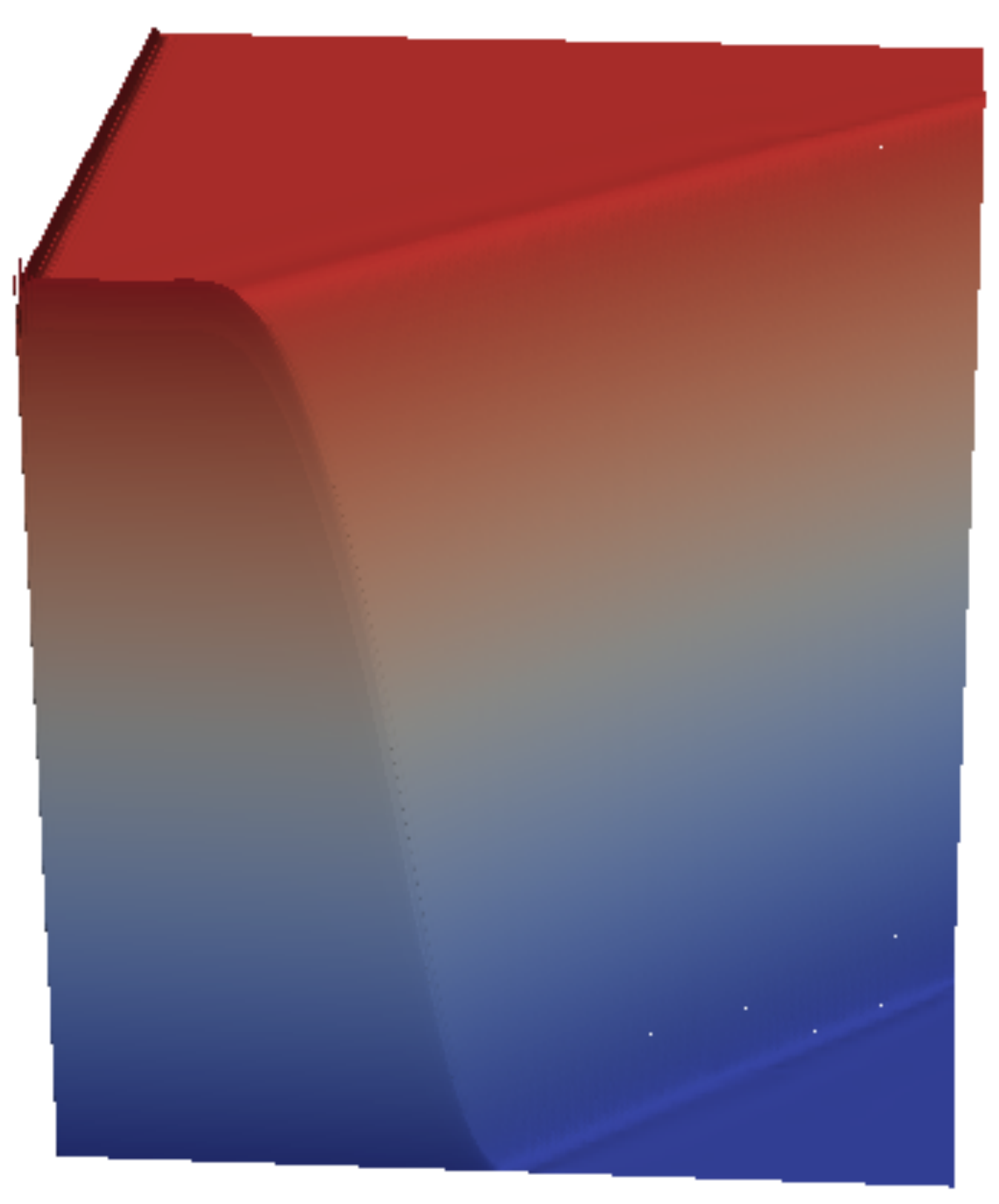}
\end{center} 
\\
\vspace{-15pt}
\begin{center}
(f) Solution 5
\end{center}
\end{tabularx}
\caption{Quadratic solutions to the advection skew to
  the mesh problem. These solutions
  correspond to the B\'{e}zier meshes in the left column
  of~\Crefrange{fig:skew_static_mesh_bq}{fig:skew_static_mesh_bc}.}
\label{fig:skew_static_sol_bq}
\end{center}
\end{figure}

\begin{figure}
\begin{center}
\begin{tabularx}{0.9\textwidth}{X}
{\begin{tabularx}{0.9\textwidth}{XX}
\begin{center}
\includegraphics[scale=0.25]{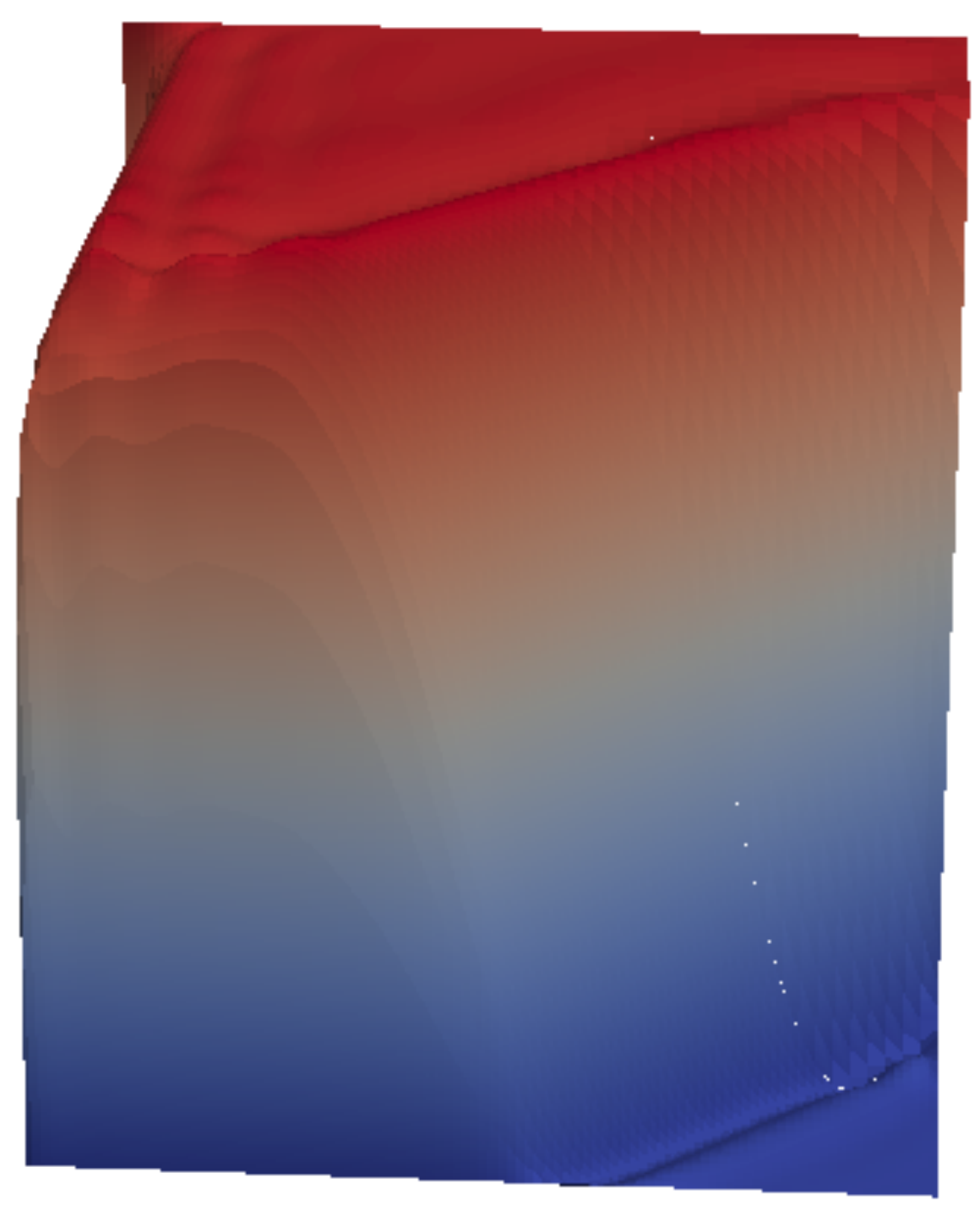}
\end{center} 
&
\begin{center}
\includegraphics[scale=0.25]{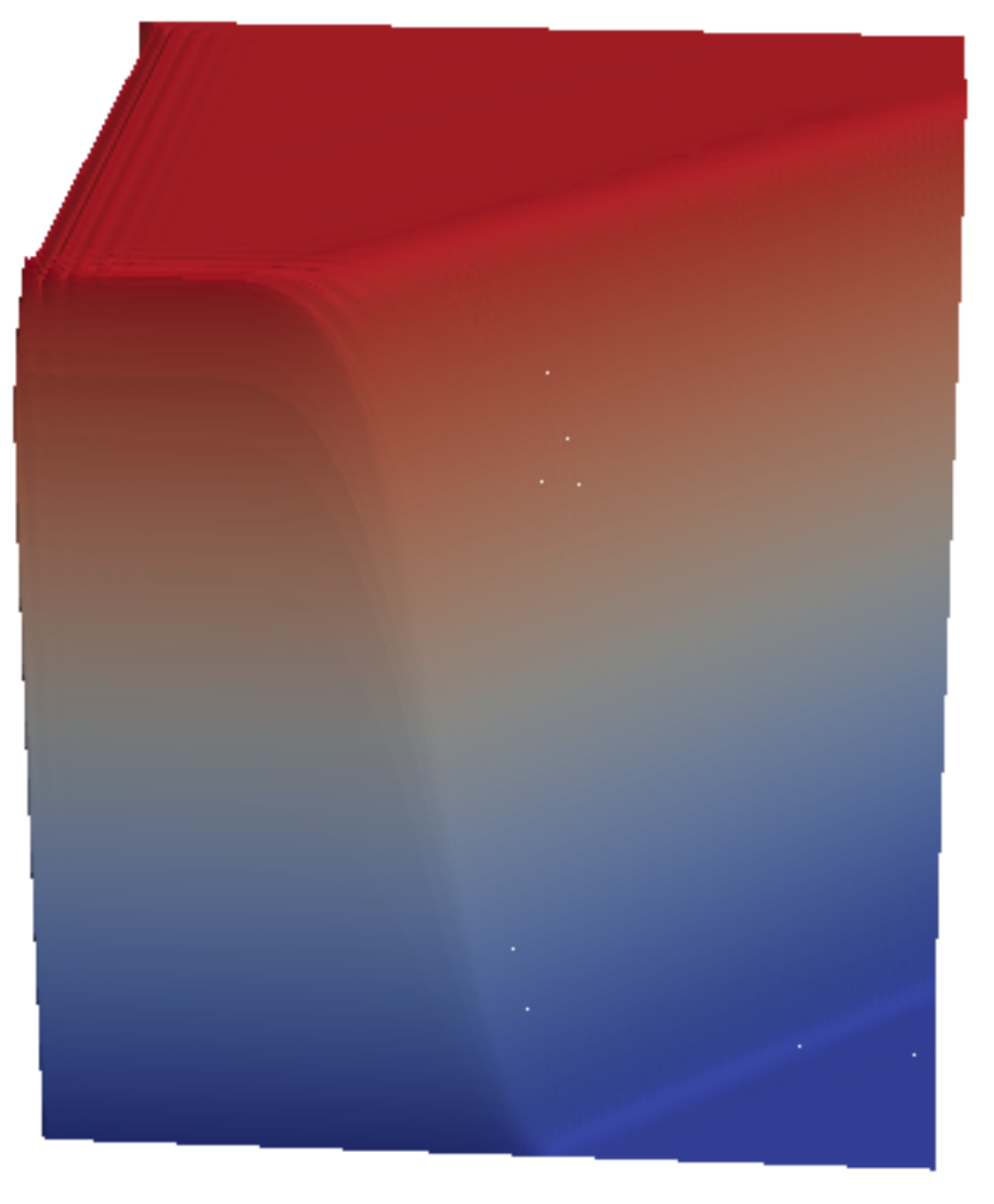}
\end{center}
\\
\vspace{-15pt}
\begin{center}
(a) Solution 1
\end{center}
&
\vspace{-15pt}
\begin{center}
(b) Solution 2
\end{center}\\
\begin{center}
\includegraphics[scale=0.25]{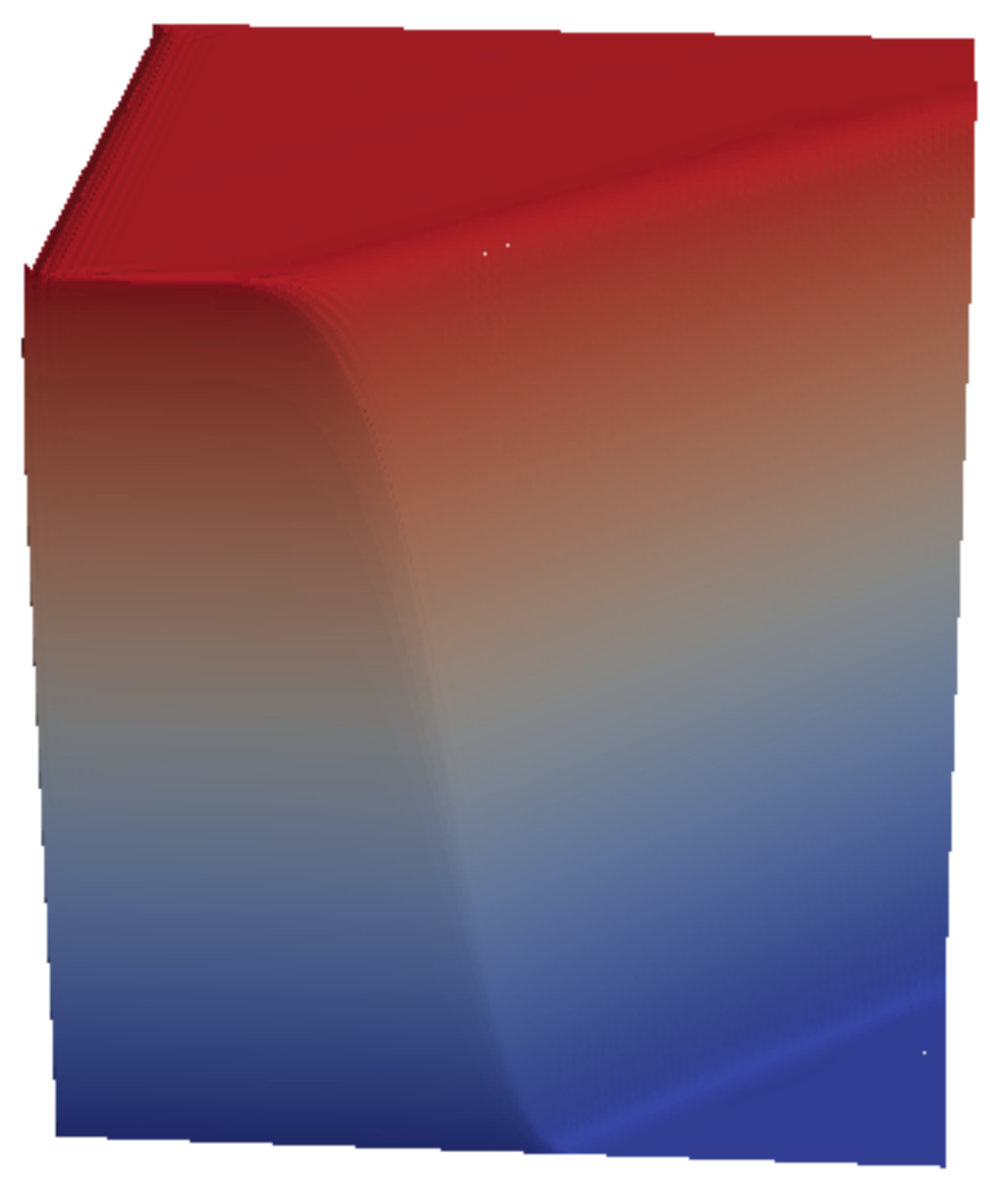}
\end{center} 
&
\begin{center}
\includegraphics[scale=0.25]{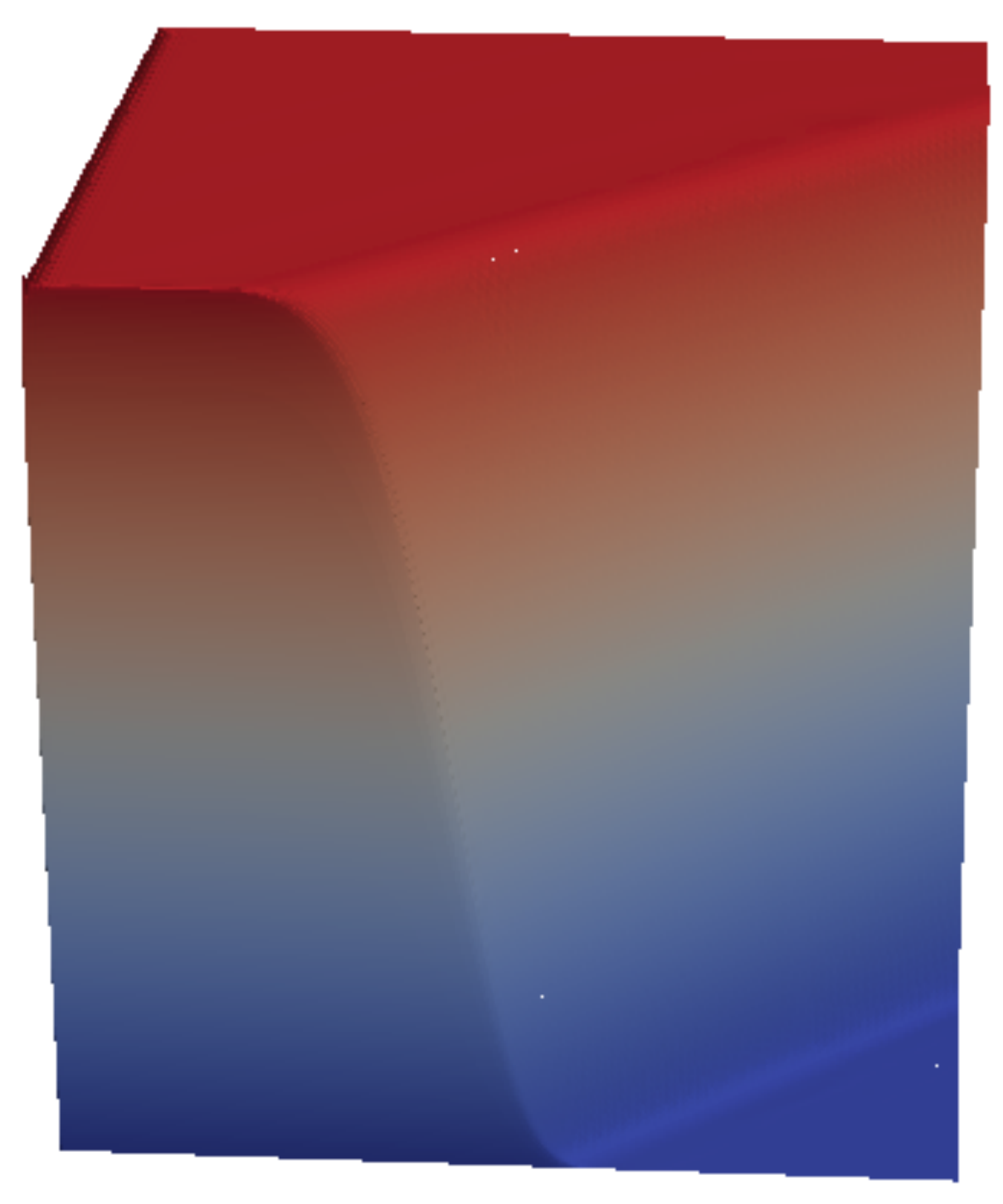}
\end{center}
\\
\vspace{-15pt}
\begin{center}
(c) Solution 3
\end{center}
&
\vspace{-15pt}
\begin{center}
(d) Solution 4
\end{center}
\end{tabularx}}
\begin{center}
\includegraphics[scale=0.25]{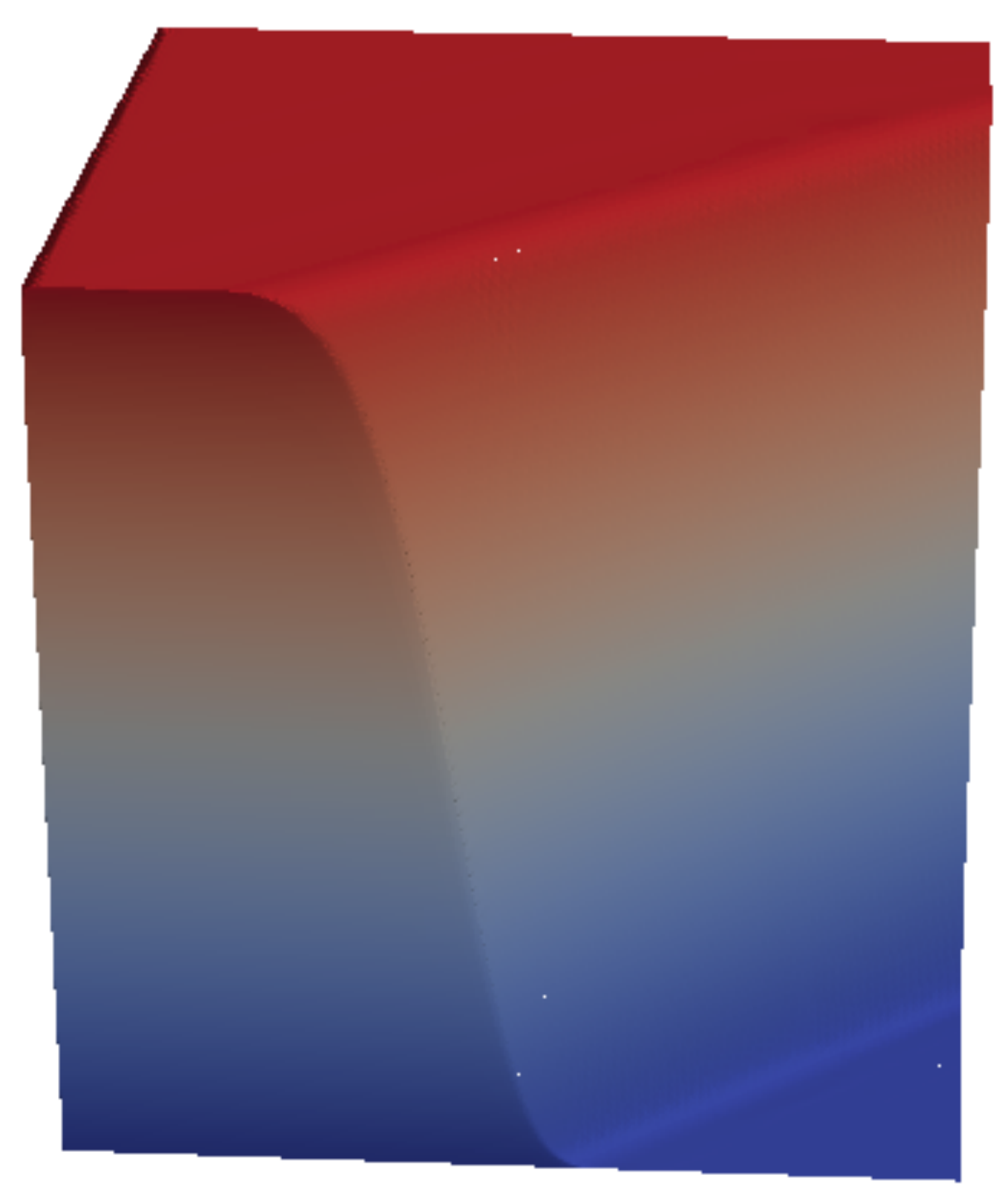}
\end{center} 
\\
\vspace{-15pt}
\begin{center}
(f) Solution 5
\end{center}
\end{tabularx}
\caption{Cubic solutions to the advection skew to
  the mesh problem. These solutions
  correspond to the B\'{e}zier meshes in the right column 
  of~\Crefrange{fig:skew_static_mesh_bq}{fig:skew_static_mesh_bc}.}
\label{fig:skew_static_sol_bc}
\end{center}
\end{figure}

\section{Conclusion}
\label{sec:conclusion}
We have presented hierarchical analysis-suitable T-splines which 
is a superset of both analysis-suitable T-splines and hierarchical B-splines. 
We have also developed the necessary theoretical formulation of 
HASTS including a proof of the local linear independence
of analysis-suitable T-splines.  We presented a simple algorithm for the creation of
nested T-spline spaces and also extended B\'{e}zier extraction to HASTS.
We then demonstrated the potential of HASTS by comparing HASTS to a
local refinement algorithm for T-splines which demonstrated the
improved efficiency and locality of using a hierarchical approach.  
We also demonstrated the use of HASTS in the context of isogeometric
analysis by solving the benchmark static skew advection problem.

In future work we will provide a detailed description of the underlying
algorithms to perform hierarchical refinement in the context of HASTS.  We
will also consider hierarchical $p$ and $k$ refinements of T-splines.  
Finally, we intend to extend the definition of spline forests in~\cite{ScThEv13} to
the T-spline regime. This will allow us to accommodate
smooth interfaces and also interface directly with commercial T-spline products.

\appendix

\bibliographystyle{elsarticle-num}
\bibliography{bibliography}

\end{document}